\newcommand{\N}{\mathbb{N}}
\newcommand{\R}{\mathbb{R}}
 \journalname{journal}
\begin{document}

\title{\centerline{Layered solutions to the vector Allen-Cahn equation in $\R^2$.}\\
\centerline{Characterization of minimizers}\\\centerline{ and a new approach to heteroclinic connections.}}

\titlerunning{Layered solutions to the vector Allen-Cahn equation in $\R^2$}        

\author{\centerline{Giorgio Fusco}
}

\authorrunning{Fusco}

\institute{\at
              University of L'Aquila \\
            via Vetoio, Coppito\\
            67010 L'Aquila, ITALY \\
              \email{fusco@univaq.it}}           

\date{}

\maketitle

\begin{abstract}
Let $W:\R^m\rightarrow\R$ be a nonnegative potential with exactly two nondegenerate zeros $a_-\neq a_+\in\R^m$. We assume that there are $N\geq 1$ distinct heteroclinic orbits connecting $a_-$ to $a_+$ represented by maps  $\bar{u}_1,\ldots,\bar{u}_N$ that minimize the one-dimensional energy $J_\R(u)=\int_\R(\frac{\vert u^\prime\vert^2}{2}+W(u)){d} s$.

We first consider the problem of characterizing the minimizers $u:\R^n\rightarrow\R^m$ of the energy $\mathcal{J}_\Omega(u)=\int_\Omega(\frac{\vert\nabla u\vert^2}{2}+W(u)){d} x$. Under a nondegeneracy condition on $\bar{u}_j$, $j=1,\ldots,N$ and in two space dimensions, we prove that, provided it remains away from $a_-$ and $a_+$ in corresponding half spaces $S_-$ and $S_+$, a bounded minimizer $u:\R^2\rightarrow\R^m$  is necessarily an heteroclinic connection between suitable translates $\bar{u}_-(\cdot-\eta_-)$ and $\bar{u}_+(\cdot-\eta_+)$ of some $\bar{u}_\pm\in\{\bar{u}_1,\ldots,\bar{u}_N\}$.

Then we focus on the existence problem and
assuming $N=2$ and denoting $\bar{u}_-,\bar{u}_+$ the representations of the two orbits connecting $a_-$ to $a_+$  we give a new proof of the existence (first proved in \cite{scha}) of a solution $u:\R^2\rightarrow\R^m$ of
\[\Delta u=W_u(u),\]
that connects certain translates of $\bar{u}_\pm$.
\keywords{minimizer,\ heteroclinic connections,\ effective potential,\ hamiltonian identities.}
\PACS{02.30jr\and 02.30Xx}
\subclass{MSC 35J47 \and MSC 35J50 \and MSC 35J57}
\end{abstract}
\section{Introduction}

We consider entire solutions $u:\R^n\rightarrow\R^m$ of the vector Allen-Cahn equation
\begin{equation}\label{system}
\Delta u=W_u(u),
\end{equation}
where $W:\R^m\rightarrow\R$ is a $C^3$ potential and $W_u=(\frac{\partial W}{\partial u_1},\dots,\frac{\partial W}{\partial u_m})^\top$. We suppose that $W$ satisfies
\[\liminf_{\vert u\vert\rightarrow+\infty} W(u)>0,\]
and
\begin{eqnarray}
0=W(a_\pm)<W(u),\;\;u\not\in\{a_-,a_+\}
\end{eqnarray}
for some $a_-\neq a_+\in\R^m$ .

\noindent
We  assume that $a_\pm$ are non degenerate in the sense that the quadratic forms $W_{uu}(a_\pm)z\cdot z$ are positive definite.

These assumptions on $W$
ensure \cite{af2}, \cite{sourdis}, \cite{monteil}, \cite{ZS} the existence of an  heteroclinic connection between $a_-$ and $a_+$ represented by a map $\bar{u}:\R\rightarrow\R^m$ which is a minimizer of the problem
\begin{equation}\label{barmin}
\begin{split}
& J_\R(\bar{u})=\min_{\varphi\in\mathcal{A}}J_\R(\varphi),\;\;\;
J_\R(\varphi):=\int_\R(\frac{1}{2}\vert \varphi^\prime\vert^2+W(\varphi))ds,\\
&\mathcal{A}:=\{\varphi\in W_{\mathrm{loc}}^{1,2}(\R;\R^m):\lim_{s\to\pm\infty}\varphi(s)=a_\pm \}.
\end{split}
\end{equation}
We assume that there are exactly $N\geq 1$ distinct connections with representations $\bar{u}_1,\dots,\bar{u}_N$ that minimize $J_\R$ and define
\begin{equation}\label{c0}
c_0:= J_\R(\bar{u}_j),\;\;j=1,\ldots,N.
\end{equation}
We refer to \cite{abchen} for examples of potentials that allow multiple connections.
Given $\bar{u}\in\{\bar{u}_1,\dots,\bar{u}_N\}$, for each $r\in\R$
 the translate $\bar{u}(\cdot-r)$ is also a minimizer and therefore $0$ is in the spectrum of the linearized operator $T:W^{2,2}(\R;\R^m)\rightarrow L^2(\R;\R^m)$:
\begin{equation}\label{operator}
T\varphi=-\varphi^{\prime\prime}+W_{uu}(\bar{u})\varphi,\quad\bar{u}\in\{\bar{u}_1,\dots,\bar{u}_N\},
\end{equation}
and $\bar{u}^\prime$ is a corresponding eigenvector.
We assume that the $N$ connections are nondegenerate in the sense that $0$ is a simple eigenvalue of $T$ for $\bar{u}=\bar{u}_1,\dots,\bar{u}_N$.
\vskip.2cm
Is a standard fact that the assumption on $a_\pm$ being non degenerate and the smoothness of $W$ imply
 ($\sigma(s)=-$ for $s\leq 0$, $\sigma(s)=+$ for $s>0$)
\begin{equation}\label{n-bound}
\vert\bar{u}(s)-a_{\sigma(s)}\vert, \vert\bar{u}^\prime(s)\vert, \vert\bar{u}^{\prime\prime}(s)\vert\leq\bar{K}e^{-\bar{k}\vert s\vert},\;\;\text{for}\;\;s\in\R,
\;\,\bar{u}\in\{\bar{u}_1,\ldots,\bar{u}_N\},
\end{equation}
for some $\bar{k}, \bar{K}>0$.

For each map $\mathrm{u}\in W_{\rm{loc}}^{1,2}(\R^n;\R^m)\cap L^\infty(\R^n;\R^m)$ and for each open bounded and Lipschitz set $\Omega\subset\R^n$ we define
\[\mathcal{J}_\Omega(\mathrm{u}):=\int_\Omega(\frac{1}{2}\vert\nabla{\mathrm{u}}\vert^2+W(\mathrm{u}))dx.\]

\noindent
We focus on solutions of (\ref{system}) which are {\it minimizers}:
\begin{definition}\label{definition}
A map $u\in C^2(\mathcal{O};\R^m)\cap L^\infty(\mathcal{O};\R^m)$, $\mathcal{O}\subset\R^n$ open, is called a minimizer if
\[\mathcal{J}_\Omega(u)\leq\mathcal{J}_\Omega(\mathrm{u}),\]
for each open bounded and Lipschitz set $\Omega\subset\mathcal{O}$ and for each $\mathrm{u}\in W^{1,2}(\Omega;\R^m)\cap L^\infty(\Omega;\R^m)$ such that $\mathrm{u}=u$ on $\partial\Omega$.
\end{definition}
Classifying all minimizers $u=\R^n\rightarrow\R^m$ for general $n\geq 1$ is probably an impossible task. Even the scalar case $m=1$, in spite of many deep results \cite{gg}, \cite{ac}, \cite{aac}, \cite{gg1},  \cite{fsv}, \cite{s}, \cite{DKW} motivated by a famous conjecture of De Giorgi \cite{fa}, is far from being completely understood.
\vskip.2cm
We restrict to two space dimensions and give a complete characterization of all minimizers $u=\R^2\rightarrow\R^m$ that in half-spaces $S_-,S_+$ remain away from $a_-,a_+$ respectively. More precisely we prove
(recall that $\sigma(s)=-$ for $s\leq 0$, $\sigma(s)=+$ for $s>0$)
\begin{theorem}\label{main}
Assume $W$, $a_\pm$ and $\bar{u}_1,\dots,\bar{u}_N$ as before and let $u:\R^2\rightarrow\R^m$ be a minimizer that, for some $\delta>0$ and $\lambda>0$, satisfies the condition
\begin{equation}\label{avoid-a}
\vert u(x,y)-a_\pm\vert\geq\delta,\quad(x,y)\in S_\pm,\;\;
S_\pm=\{(x,y):\mp y\geq\lambda\}.
\end{equation}
Then there are $\bar{u}_\pm\in\{\bar{u}_1,\dots,\bar{u}_N\}$, numbers $\eta_\pm\in\R$ and constants $k, K$,  $k^\prime, K^\prime>0$ such that
\begin{equation}\label{asymptotic-u}
\begin{split}
&\vert u(x,y)-a_{\sigma(y)}\vert\leq Ke^{-k\vert y\vert},\\
&\vert u(x,y)-\bar{u}_{\sigma(x)}(y-\eta_{\sigma(x)})\vert\leq K^\prime e^{-k^\prime\vert x\vert}.
\end{split}
\end{equation}
Moreover if $\bar{u}_-=\bar{u}_+=\bar{u}$ it results
\begin{equation}\label{unique}
u(x,y)=\bar{u}(y-\eta),
\end{equation}
for some $\eta\in\R$.
\end{theorem}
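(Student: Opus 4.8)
\emph{Strategy and decay in $y$.} The plan is to reduce the planar problem to the one–dimensional theory recalled above by studying the vertical slices $y\mapsto u(x,y)$ and controlling their $x$–dependence through minimality. From $u\in L^\infty$ and $\Delta u=W_u(u)$, interior elliptic estimates give uniform bounds on $u,\nabla u,\nabla^2u$. The first step is to upgrade \eqref{avoid-a} to a genuine decay: since in $S_-=\{y\ge\lambda\}$ the map $u$ stays $\delta$–away from $a_-$, if $u$ were also $\varepsilon_0$–away from $a_+$ on a large ball inside $S_-$ then $W(u)$ would be bounded below there, and $\mathcal J_{B_r}(u)$ would grow like $r^2$, contradicting the at most linear energy growth of a minimizer in the plane (via comparison and density estimates in the spirit of Caffarelli and C\'ordoba, where \eqref{avoid-a} is used). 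Hence $|u-a_+|<\varepsilon_0$ on $\{y\ge M\}$ and, symmetrically, $|u-a_-|<\varepsilon_0$ on $\{y\le-M\}$, uniformly in $x$. For $\varepsilon_0$ small, $v=u-a_\pm$ solves a linear system with zeroth–order term close to the positive definite $W_{uu}(a_\pm)$, so $w=|v|^2$ satisfies $\Delta w\ge cw$; comparison with $e^{-k\vert y\vert}$ yields the first estimate in \eqref{asymptotic-u}, uniformly in $x$, and interior gradient estimates give $|u_x|,|u_y|\le Ce^{-k\vert y\vert}$ for $\vert y\vert$ large.

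\emph{Slice energy, the energy bound, and the Hamiltonian identity.} Set $E(x):=J_\R(u(x,\cdot))=\int_\R(\tfrac12|u_y|^2+W(u))\,dy$. By the previous step $E$ is finite, uniformly bounded and Lipschitz in $x$, and $u(x,\cdot)\in\mathcal A$, so $E(x)\ge c_0$ for all $x$. A competitor argument gives $\mathcal J_{(-R,R)\times\R}(u)\le 2Rc_0+C$: keep $u$ near $x=\pm R$, interpolate in a unit $x$–interval from $u(\pm R,\cdot)$ to a fixed minimizing profile, and on the remaining interval use a translate of a minimizer (slice energy exactly $c_0$, $x$–derivative cost $O(1/R)$). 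Since $\mathcal J_{(-R,R)\times\R}(u)=\int_{-R}^R\!\big(E(x)+\tfrac12\int_\R|u_x|^2dy\big)dx\ge 2Rc_0$, we obtain $\int_\R(E(x)-c_0)\,dx<\infty$ and $\int_{\R^2}\tfrac12|u_x|^2<\infty$; with $E-c_0\ge0$ Lipschitz this forces $E(x)\to c_0$ as $\vert x\vert\to\infty$, so the slices are asymptotically $J_\R$–minimizing. Integrating in $y$ the stress–energy identity
\[
\partial_x\Big(\tfrac12|u_x|^2-\tfrac12|u_y|^2-W(u)\Big)+\partial_y\big(u_x\cdot u_y\big)=0,
\]
whose boundary terms at $y=\pm\infty$ vanish by the decay above, shows that $\int_\R(\tfrac12|u_x|^2-\tfrac12|u_y|^2-W(u))\,dy$ is independent of $x$; evaluating it along a sequence $x_n\to\infty$ where $u_x(x_n,\cdot)\to0$ in $L^2$ and $E(x_n)\to c_0$ identifies the constant as $-c_0$, whence the effective identity $\int_\R\tfrac12|u_x(x,y)|^2dy=E(x)-c_0$ for all $x$.

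\emph{Identification of the limit profile and rate in $x$.} Here the nondegeneracy hypothesis enters decisively: the set of $J_\R$–minimizers is the disjoint union of the $N$ translation families $\{\bar u_j(\cdot-r):r\in\R\}$, each isolated and, since $0$ is a simple eigenvalue of $T$, nondegenerate transversally to the translation. Since $E(x)\to c_0$, for $x$ large $u(x,\cdot)$ lies in a thin tube around a single family, say that of $\bar u_+$; writing $u(x,\cdot)=\bar u_+(\cdot-\eta(x))+v(x,\cdot)$ with $v(x,\cdot)\perp\bar u_+'(\cdot-\eta(x))$ (the modulation parameter $\eta(x)$ well defined by the implicit function theorem), projecting the equation onto $\ker T$ and its complement yields an equation $v_{xx}=T_{\eta(x)}v+N(v,\dot\eta)$ together with a scalar equation for $\eta$; the coercivity $\langle T_\eta w,w\rangle\ge\mu\Vert w\Vert^2$ on $(\ker T)^\perp$ then closes a differential inequality for $\Vert v(x,\cdot)\Vert^2+|\dot\eta(x)|^2$ giving exponential decay in $x$ and $\eta(x)\to\eta_+$; combined with elliptic estimates to pass from $L^2$ to pointwise control, this is the second line of \eqref{asymptotic-u} on $x\ge\lambda$, and symmetrically on $x\le-\lambda$. \textbf{This is the main obstacle:} extracting a \emph{single} limiting profile $\bar u_\pm$ and translation $\eta_\pm$ and upgrading convergence to an exponential rate — precisely where the ``effective potential'' viewpoint (the reduced one–dimensional dynamics in $x$ organized by the Hamiltonian identity of the previous step) does the work; the nondegeneracy of the $\bar u_j$ is essential, since without it the slices need not stabilize.

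\emph{Rigidity when $\bar u_-=\bar u_+$.} If $\bar u_-=\bar u_+=\bar u$ the competitor can be sharpened: interpolate from $u(\pm R,\cdot)$ to the corresponding translate of $\bar u$ over unit $x$–intervals, and between $\bar u(\cdot-\eta_-)$ and $\bar u(\cdot-\eta_+)$ use $\bar u(\cdot-\ell(x))$ with $\ell$ affine on $[-R+1,R-1]$; the endpoint errors are $O(e^{-k'R})$ by \eqref{asymptotic-u} and the middle $x$–derivative cost is $O(1/R)$, so $\mathcal J_{(-R,R)\times\R}(u)\le 2Rc_0+o(1)$ as $R\to\infty$. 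Since $\mathcal J_{(-R,R)\times\R}(u)\ge 2Rc_0+\int_{(-R,R)\times\R}\tfrac12|u_x|^2$, letting $R\to\infty$ forces $\int_{\R^2}|u_x|^2=0$, hence $u_x\equiv0$ and $u(x,y)=U(y)$; then $U''=W_u(U)$ with $U\in\mathcal A$ and, by the same inequality, $J_\R(U)\le c_0$, so $U$ minimizes $J_\R$. Letting $x\to+\infty$ in the now $x$–independent second line of \eqref{asymptotic-u} gives $U=\bar u(\cdot-\eta_+)$, which is \eqref{unique} with $\eta=\eta_+$.
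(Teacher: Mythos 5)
Your proposal is correct in outline and, for the main estimate \eqref{asymptotic-u}, follows essentially the same route as the paper: establish exponential decay in $y$ from \eqref{avoid-a} via density estimates plus a barrier; derive the linear energy bound $\mathcal J_{(-R,R)\times\R}(u)\le 2Rc_0+C$ by a tubular competitor, hence $\int_{\R^2}|u_x|^2<\infty$ and $\|u_x(x,\cdot)\|\to0$; obtain the Hamiltonian identity $\tfrac12\|u_x(x,\cdot)\|^2=J_\R(u(x,\cdot))-c_0$; and then use a modulation decomposition $u(x,\cdot)=\bar u_+(\cdot-\eta(x))+v(x,\cdot)$ together with the spectral gap of $T$ (the content of the effective-potential Lemmas) to get a convexity inequality $\tfrac{d^2}{dx^2}\|v\|^2\gtrsim\|v\|^2$ and hence exponential convergence to a single $\bar u_+(\cdot-\eta_+)$. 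You correctly single out this modulation/stabilization step as the crux. One small inaccuracy: "a differential inequality for $\|v\|^2+|\dot\eta|^2$" is not the actual mechanism; what closes the argument is a second-order inequality for $\|v\|^2$ alone, using that $|\dot\eta|$ is controlled by $\|v\|$ (in the paper this bound comes from the \emph{second} Hamiltonian identity $\int_\R u_x\cdot u_y\,dy=0$ and the resulting formula for $h'$; your modulation formula $\dot\eta=-\langle u_x,\bar u_+'(\cdot-\eta)\rangle/(\|\bar u_+'\|^2-\langle v,\bar u_+''(\cdot-\eta)\rangle)$ together with the first identity $\|u_x\|^2=2\mathcal W(v)$ gives the same control and lets you avoid the second identity). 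Also note that inferring "$u(x,\cdot)$ lies in a thin tube around a single family" from $E(x)\to c_0$ implicitly uses a quantitative lower bound for the effective potential away from the translates (the paper's Lemma \ref{away}), which you should cite or prove.

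Your argument for the rigidity statement \eqref{unique} is genuinely different and simpler than the paper's. The paper proves a "no interior maximum" lemma (Lemma \ref{NoMax}) for $q(x)$ via a local reflection-competitor, then rules out excursions outside the tube with a second, more elaborate competitor; this machinery is reused later in the existence proof of Theorem \ref{scha-th}, which explains the investment. You instead build one global competitor on $(-R,R)\times\R$: interpolate from $u(\pm R,\cdot)$ to $\bar u(\cdot-\eta_\pm)$ on unit intervals, slide $\bar u(\cdot-\ell(x))$ with affine $\ell$ across the middle, obtaining $\mathcal J_{(-R,R)\times\R}(\text{competitor})\le 2Rc_0+O(1/R)+O(Re^{-\epsilon R})$; combined with the slice bound $\mathcal J_{(-R,R)\times\R}(u)\ge 2Rc_0+\int_{-R}^R\tfrac12\|u_x\|^2\,dx$ and minimality (extended to strips via Lemma \ref{extend}) this forces $u_x\equiv0$ upon letting $R\to\infty$, and the already-proved convergence identifies the one-dimensional profile. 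This is a clean and correct shortcut for Theorem \ref{main} alone, though one must be a bit careful that the $L^2(\R_y)$ control of $u(\pm R,\cdot)-\bar u(\cdot-\eta_\pm)$ used in the endpoint error is obtained by interpolating the pointwise bound in \eqref{asymptotic-u} with the uniform decay in $y$.
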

Theorem \ref{main} says that under condition (\ref{avoid-a}) and in two dimensions a minimizer is necessarily an \emph{heteroclinic connection} between two minimizers of the \emph{effective potential}
\begin{equation}\label{eff-def}
\mathcal{W}(v):=J_\R(\bar{u}+v)-J_\R(\bar{u}),\quad c_0:= J_\R(\bar{u}),\;\;\bar{u}\in\{\bar{u}_1,\dots,\bar{u}_N\}.
\end{equation}
It is an open problem, which requires new ideas, to show that the same is true if condition (\ref{avoid-a}) is removed.

\noindent
We observe explicitly that if $N=1$ Theorem \ref{main} implies that a minimizer $u$ is necessarily a translate of $\bar{u}$. This is an example of a situation where a bounded smooth  solution $u:\R^n\rightarrow\R^m$ of (\ref{system}) is \emph{rigid} in the sense that there are: a subspace $V\subset\R^n$, $\mathrm{dim}V<n$, a projection $P:\R^n\rightarrow V$ and a map $\bar{u}:V\rightarrow\R^m$ such that
\[u(z)=\bar{u}(Pz),\;\text{ for }\;z\in\R^n.\]
In the case at hand $V=\{(x,y):x=0\}$ and $\bar{u}=\bar{u}_-(\cdot-\eta)=\bar{u}_+(\cdot-\eta)$.

Besides the already mentioned papers \cite{ac}, \cite{aac}, \cite{gg}, \cite{gg1},  \cite{fsv}, \cite{s}, \cite{DKW} concerning
the scalar case $m=1$, for the vector case $m>1$ we mention \cite{fg} where the authors, for $n=2$, and under a monotonicity assumption that involves $W$ and the components of $u$ prove that $u$ is one-dimensional. In \cite{fa1} for $m=2$, $n=3$ or $n=4$ and $W_u(u)=u(1-\vert u\vert^2)$ the author shows that a solution $u$ of (\ref{system}), which is a minimizer and satisfies $\vert u\vert\rightarrow 1$ as $\vert x\vert\rightarrow+\infty$, is necessarily constant. For rigidity results for the competitive system
\begin{equation}
\begin{split}
&\Delta u_j=\sum_{i\neq j}u_i^2u_j,\;\;j=1,\ldots,m,\\
& u_j>0,
\end{split}
\end{equation}
we refer to \cite{SoaveTerracini} and to the references therein.

Theorem \ref{main} can be considered as an infinite dimensional analogous of
\begin{theorem}\label{main-0}
Assume that $W\in C^3(\R^m;\R)$ satisfies
\[0=W(a)<W(z),\quad z\in\R^m\setminus\{a_1,\ldots,a_N\},\]
where $a_1,\ldots,a_N\in\R^m$ are distinct and non degenerate. Then, if $u:\R\rightarrow\R^m$ is a minimizer, either $u\equiv a$ for some $a\in\{a_1,\ldots,a_N\}$ or there exist $a_-\neq a_+\in\{a_1,\ldots,a_N\}$ such that
\[\vert u(x)-a_{\sigma(x)}\vert\leq Ke^{-k\vert x\vert},\quad x\in\R.\]
\end{theorem}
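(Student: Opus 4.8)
\emph{Proof proposal.} The plan is to exploit the one–dimensional Hamiltonian identity together with a cheap comparison path, and then to linearize near the wells. As usual, a minimizer $u$ in the sense of Definition~\ref{definition} solves the Euler--Lagrange equation $u''=W_u(u)$ classically, is as smooth as $W$ permits ($u\in C^3$ since $W\in C^3$), and is bounded; write $M:=\|u\|_\infty$. I would first produce, on each interval $(-L,L)$ with $L>1$, the competitor $\mathrm{u}$ equal to the segment from $u(-L)$ to $a_1$ on $[-L,-L+1]$, equal to $a_1$ on $[-L+1,L-1]$, and equal to the segment from $a_1$ to $u(L)$ on $[L-1,L]$. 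Since $W$ is continuous and the relevant segments stay in a fixed ball depending only on $M$ and $a_1$, there is a constant $C=C(M,W)$, independent of $L$, with $\mathcal{J}_{(-L,L)}(\mathrm{u})\le C$; minimality then gives $\mathcal{J}_{(-L,L)}(u)\le C$ for all $L$, hence $\mathcal{J}_\R(u)<\infty$, and in particular $\int_\R W(u)\,dx<\infty$ and $\int_\R|u'|^2\,dx<\infty$.

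Next, $H:=\tfrac12|u'|^2-W(u)$ satisfies $H'=u'\cdot(u''-W_u(u))=0$, so $H\equiv E$ for a constant $E$. Integrating over $(-L,L)$ yields $2LE=\int_{-L}^{L}\bigl(\tfrac12|u'|^2-W(u)\bigr)dx$, whose right–hand side converges to a finite limit as $L\to\infty$ by the previous step; this forces $E=0$, i.e.\ the equipartition $\tfrac12|u'|^2=W(u)$. Consequently $u'$ is bounded, $(W\circ u)'=W_u(u)\cdot u'$ is bounded, so $W\circ u$ is uniformly continuous on $\R$; together with $\int_\R W(u)\,dx<\infty$ this gives $W(u(x))\to0$, hence $\mathrm{dist}\bigl(u(x),\{a_1,\dots,a_N\}\bigr)\to0$, as $x\to\pm\infty$. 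Choosing $\rho>0$ so small that the balls $B_\rho(a_j)$ are pairwise disjoint, for $x\ge x_1$ large $u(x)\in\bigcup_j B_\rho(a_j)$, and since the $u$–preimages of these disjoint open balls partition the connected set $[x_1,\infty)$ into relatively open pieces, $u$ lies in a single ball $B_\rho(a_+)$ for all $x\ge x_1$. The nondegeneracy inequality $W(z)\ge c\,|z-a_+|^2$ valid near $a_+$ then upgrades this to $u(x)\to a_+$; symmetrically $u(x)\to a_-$ as $x\to-\infty$, with $a_\pm\in\{a_1,\dots,a_N\}$.

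It remains to split into the two cases. If $a_-=a_+=:a$, I would revisit the comparison path: since $u(\pm L)\to a$, for every $\varepsilon>0$ the transition segments can be taken inside $B_\varepsilon(a)$, where $W(z)\le C'|z-a|^2$, so their cost is $O(\varepsilon^2)$; letting $L\to\infty$ gives $\mathcal{J}_\R(u)\le C''\varepsilon^2$ for all $\varepsilon>0$, hence $\mathcal{J}_\R(u)=0$, so $u'\equiv0$ and (by $u(x)\to a$) $u\equiv a$. If instead $a_-\ne a_+$, set $v:=u-a_+$ and $g:=\tfrac12|v|^2$; for $x\ge x_0$ large $|v|$ is small and, using $W_u(a_+)=0$ and the Taylor expansion of $W_u$ about $a_+$,
\[
g''=|v'|^2+v\cdot W_u(a_++v)\ \ge\ \mu\,g,
\]
where $\mu>0$ depends only on $W_{uu}(a_+)$. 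Since $g\to0$, comparing $g$ with the supersolution $x\mapsto g(x_0)e^{-\sqrt{\mu}(x-x_0)}$ via the maximum principle (their difference cannot have a positive interior maximum) gives $g(x)\le g(x_0)e^{-\sqrt{\mu}(x-x_0)}$, i.e.\ exponential decay of $v$; the same argument near $a_-$ handles $x\to-\infty$, and enlarging the constant to absorb the bounded middle interval yields $|u(x)-a_{\sigma(x)}|\le Ke^{-k|x|}$ on all of $\R$.

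Most of these steps are routine. The point that needs the most care is the passage from ``$u(x)$ accumulates on the finite set $\{a_1,\dots,a_N\}$'' to ``$u(x)$ converges to a single $a_\pm$'', which genuinely uses the finiteness of the set of wells, the connectedness of $[x_1,\infty)$, and the local nondegeneracy at the same time; the closing exponential–decay estimate is the most computational but is the classical linearization argument at a hyperbolic equilibrium.
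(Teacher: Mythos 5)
Your proof is correct and reaches the same conclusion, but the middle step — upgrading ``the energy controls $u$'' to ``$u(x)\to a_\pm$ as $x\to\pm\infty$'' — goes by a genuinely different route than the paper's. The paper uses a pigeonhole argument: the competitor $v_{x_0,l}$ bounds $J_{I_l(x_0)}(u)$ uniformly, the lower bound $W>w_r$ away from the wells forces a point in each window $I_{l_r}$ where $u$ is $r$-close to some $a_j$, finiteness of $N$ gives a subsequence $x_{k_i}\to+\infty$ all close to the \emph{same} $a_+$, and then the Cut-Off Lemma of \cite{af3} (a variational, minimality-based confinement result) is invoked to conclude $|u-a_+|\le r$ on a whole ray. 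You instead first establish the global bound $\mathcal{J}_\R(u)<\infty$, exploit the one-dimensional Hamiltonian identity to get $E=0$ and hence equipartition $\tfrac12|u'|^2=W(u)$ (whence $u'$ is bounded), apply a Barbalat-type argument to conclude $W(u(x))\to 0$, and then use a topological/connectedness argument on the disjoint balls $B_\rho(a_j)$ to pin $u$ to a single well on each half-line. Your route is more self-contained: it replaces the Cut-Off Lemma by the elementary facts that $\tfrac12|u'|^2-W(u)$ is a first integral in 1D and that a nonnegative, uniformly continuous, integrable function on $\R$ tends to zero. The paper's route is a pattern that scales better to the PDE version of the argument (where there is no first integral and the Cut-Off Lemma remains the right tool); yours is arguably the cleaner one in 1D. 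Both treatments of the case $a_-=a_+$ are essentially the same (show the total energy vanishes, so $u$ is constant), and your closing linearization/maximum-principle estimate is precisely the ``standard argument'' the paper alludes to without writing out. One small remark: the nondegeneracy is not actually needed to upgrade ``$u(x)\in B_\rho(a_+)$ and $W(u(x))\to 0$'' to ``$u(x)\to a_+$'' — continuity of $W$ and uniqueness of the zero in $\overline{B_\rho(a_+)}$ already suffice — though the quadratic lower bound is of course what drives the exponential rate afterwards.
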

In this analogy the zeros $a_1,\ldots,a_N$ of $W$ correspond to the \emph{zeros} of the effective potential $\mathcal{W}$, that is, to the minimizers $\bar{u}_1,\dots,\bar{u}_N$ and heteroclinic connections between zeros of $W$ correspond to heteroclinic connections between zeros of $\mathcal{W}$. We note however that there is also an important difference between the zeros of $W$ that are isolated and the zeros of $\mathcal{W}$, that is the maps $\bar{u}_1,\ldots,\bar{u}_N$ and their translates which lie on $N$ distinct one-dimensional manifolds. Therefore in the case of Theorem \ref{main} there is the extra difficulty of proving that the heteroclinic is asymptotic to specific elements of the manifolds of the translates of $\bar{u}_-$ and $\bar{u}_+$.

In the proof of Theorem \ref{main} we regard the minimizer $u:\R^2\rightarrow\R^m$ as a map
 \[\R\ni x\rightarrow u(x,\cdot)\in\bar{u}+W^{1,2}(\R;\R^m)\]
 where $\bar{u}$ belongs to $\{\bar{u}_1,\ldots,\bar{u}_N\}$ or is any smooth map $\bar{u}:\R\rightarrow\R^m$ with the same asymptotic behaviour. We interpret the elliptic system (\ref{system}) as an ODE in the infinite dimensional function space
 $\bar{u}+W^{1,2}(\R;\R^m)$:
  \[u_{xx}(x,\cdot)=\nabla_{L^2(\R;\R^m)}(J_\R(u(x,\cdot))-c_0).\]
 In the above analogy this equation corresponds to the equation $u^{\prime\prime}=W_u(u)$ satisfied by $\bar{u}_1,\ldots,\bar{u}_N$. This point of view and  a particular representation formula for the energy based on two \emph{hamiltonian identities} that are consequences of the minimality of $u$ allow to adapt to the case at hand some of the ideas in \cite{af2}.

 After completing the proof of Theorem \ref{main} we restrict to the case $N=2$ and turn to the problem of the \emph{existence} of solutions of (\ref{system}) that satisfies (\ref{asymptotic-u}). This question was first considered in \cite{abg} for potential invariant under the reflection that exchanges $a_-$ and $a_+$ and the existence of a symmetric solution that connects symmetric minimizers $\bar{u}_-$ and $\bar{u}_+$ of $J_\R$ was established. Working in the symmetric context greatly simplify the problem and, in particular, fixes the position of the \emph{interface} (which can be identified with the $x$ axis) that separates the regions where $u$ is near $a_-$ or near $a_+$ and automatically implies $\eta_\pm=0$ which otherwise are unknowns of the problem.

 \noindent
 The question was solved in full generality in a remarkable paper \cite{scha} by an approach based on variational arguments and on dynamical systems techniques used for the construction of solutions of (\ref{system}) in half spaces with Dirichlet conditions near $\bar{u}_-$ or near $\bar{u}_+$. The result proved in \cite{scha} is the following
 \begin{theorem}\label{scha-th}
 Let $W\in C^3(\R^m;\R)$, $a_\pm$ be as before and assume $N=2$. Assume that
\begin{equation}\label{M}
W(sz)\geq W(z),\quad\text{ for }\;\vert z\vert\geq M,\;s\geq 1.
\end{equation}
for some $M>0$.
Then there exists  $u\in C^{2+\alpha}(\R^2;\R^m)$ and $\eta_\pm\in\R$ that solve (\ref{system}) and satisfy (\ref{asymptotic-u}).
\end{theorem}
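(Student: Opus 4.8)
The plan is to reduce the construction to a renormalized variational problem, following the heuristic that was already laid out for Theorem \ref{main}: regard a candidate $u$ as a curve $x\mapsto u(x,\cdot)$ in the affine Hilbert space $\bar{u}+W^{1,2}(\R;\R^m)$ and seek a heteroclinic of the infinite-dimensional ODE $u_{xx}=\nabla_{L^2}(J_\R(u)-c_0)$ connecting the translates of $\bar{u}_-$ at $x=-\infty$ to the translates of $\bar{u}_+$ at $x=+\infty$. Since $N=2$, the ``effective potential'' $\mathcal{W}$ has exactly two disjoint one-dimensional wells (the translate manifolds of $\bar{u}_-$ and $\bar{u}_+$), and we want a connection between them. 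First I would fix a large square $Q_L=(-L,L)^2$ and minimize $\mathcal{J}_{Q_L}$ over the class of maps that on $x=-L$ agree with some translate $\bar{u}_-(\cdot-\eta)$, on $x=+L$ with some translate $\bar{u}_+(\cdot-\eta')$ (the translation parameters being free), and that satisfy a suitable barrier constraint forcing them to stay near $a_-$ for $y$ very positive and near $a_+$ for $y$ very negative, e.g. the obstacle-type condition from \eqref{avoid-a}. The growth hypothesis \eqref{M} is used exactly here: it guarantees, by the usual truncation/projection argument, that minimizers of $\mathcal{J}_{Q_L}$ over $L^\infty$ maps obey an a priori $L^\infty$ bound independent of $L$, so the constrained minimizers exist and are uniformly bounded.

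Next I would derive uniform estimates on these constrained minimizers $u_L$ that are independent of $L$. The key device is the energy representation formula based on the two Hamiltonian identities (the ones already exploited for Theorem \ref{main}): writing the energy on a vertical strip $\{x_0<x<x_1\}$ in terms of $\int (\tfrac12|u_x|^2 - (J_\R(u)-c_0))\,dx$ plus boundary terms, minimality forces the Hamiltonian-type quantity $\tfrac12\|u_x(x,\cdot)\|_{L^2}^2 - (J_\R(u(x,\cdot))-c_0)$ to be constant in $x$, and the constant is $0$ because the energy is finite (the ``kinetic'' and ``potential'' pieces must balance at $\pm\infty$). From this one reads off that $\mathcal{J}_{Q_L}(u_L)$ is bounded uniformly in $L$ by something like $2c_0$ plus a transition cost, and that $u_L$ spends only bounded ``time'' (bounded $x$-length) away from the two wells of $\mathcal{W}$. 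Using the nondegeneracy of $\bar{u}_1,\dots,\bar{u}_N$ — i.e. the strict convexity of $J_\R$ transverse to each translate manifold, which makes $\mathcal{W}$ behave near its wells like a nondegenerate quadratic form modulo the one-dimensional translation direction — I would obtain exponential decay in $|x|$ of $u_L(x,\cdot)$ toward appropriate translates of $\bar{u}_\mp$, with rates and constants independent of $L$, and exponential decay in $|y|$ toward $a_{\sigma(y)}$ via a comparison argument against radial subsolutions built from the nondegeneracy of $a_\pm$. These are precisely the estimates \eqref{asymptotic-u}.

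Finally I would pass to the limit $L\to\infty$: by the uniform $L^\infty$ bound and elliptic regularity ($C^{2+\alpha}_{\mathrm{loc}}$ estimates for \eqref{system}, using $W\in C^3$), a subsequence of $u_L$ converges in $C^2_{\mathrm{loc}}$ to an entire solution $u$ of \eqref{system}; the uniform decay estimates are inherited by $u$ and give \eqref{asymptotic-u} with some limiting $\eta_\pm\in\R$. One must check that the limit is genuinely a \emph{connection} and not one of the trivial one-dimensional solutions $\bar{u}_\pm(y-\eta)$: this is where the construction must be rigged so that the transition in the $x$-variable cannot be ``pushed to infinity'' or ``collapsed.'' I would arrange this by imposing, in the finite-$L$ problem, an additional normalization that pins the interface — for instance fixing a hyperplane-type constraint such as requiring a prescribed splitting of the energy, $\mathcal{J}_{\{x<0\}\cap Q_L}(u_L)=\mathcal{J}_{\{x>0\}\cap Q_L}(u_L)$, or fixing the location where $u$ last leaves the $\bar{u}_-$-well — and then showing this normalization survives the limit. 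I expect the main obstacle to be exactly this last point: proving a positive lower bound, uniform in $L$, on the transition cost (so that in the limit the two ends really sit in the two different wells of $\mathcal{W}$), together with ruling out concentration/escape of the interface. This requires combining the Hamiltonian identity with a quantitative ``no-neck'' lower bound coming from the nondegeneracy of the connections — essentially the infinite-dimensional analogue, for the effective potential $\mathcal{W}$ with its two translate-manifold wells, of the elementary fact that a heteroclinic of a double-well ODE has a strictly positive minimal action.
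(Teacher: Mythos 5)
Your overall framework matches the paper's strategy: regard $u(x,\cdot)$ as a curve in $\bar{u}+W^{1,2}(\R;\R^m)$, minimize on a finite domain with boundary data on the translate manifolds, use the Hamiltonian identities and the effective potential's well structure, derive $L$-independent estimates, and pass to the limit. However there are two differences from the paper and, more importantly, one genuine gap.

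On the differences: the paper works in the infinite strip $\mathcal{R}_L=(0,L)\times\R$, not a square $Q_L$, with the fixed Dirichlet condition $u(0,\cdot)=\bar{u}_-$ and a \emph{single} free translate $u(L,\cdot)=\bar{u}_+(\cdot-\eta)$ on the other side (translation invariance in $y$ reduces your two free parameters to one). The behaviour as $y\to\pm\infty$ is then handled not by an obstacle or barrier constraint --- which risks being incompatible with minimality and would need its own justification --- but by the Cut-Off Lemma of \cite{af3}, which shows one may restrict the admissible class to maps converging to $a_\pm$ with a controlled algebraic rate without increasing the energy. Your proposed $L^\infty$ truncation via (\ref{M}) is used in exactly the same way as the paper does.

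The gap is this: you correctly identify that one must rule out escape of the interface, but the normalization you propose (prescribed energy split across $\{x=0\}$, or pinning the $x$-location where $u$ last leaves the $\bar{u}_-$-well) only controls the $x$-position of the transition layer. The central technical obstruction in the paper is a \emph{different} degeneracy: the relative $y$-translation $\bar{\eta}=\eta_+-\eta_-$ between the limiting translates on the two sides could a priori blow up as $L\to\infty$, and then the sequence of approximate solutions would not converge to a connection. Bounding $\bar{\eta}$ uniformly in $L$ is the content of Proposition \ref{eta-bounded}, and it is not a soft consequence of finite energy; it requires combining the decomposition $u(x,\cdot)=\bar{u}(\cdot-h(x))+v(x,\cdot-h(x))$ near the wells, the formula (\ref{hprime}) for $h'$ produced by the Hamiltonian identity $\int u_x\cdot u_y\,dy=\tilde\omega_L=0$, the uniform bound $\int_0^L\|u_x\|^2\,dx\le 2C_0$, and a careful accounting of the variation of $h$ across the (possibly countably many) intervals where $q(x)$ exceeds the threshold $p$. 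Your proposal neither names this step nor supplies an argument for it; the ``no-neck'' lower bound and the energy-splitting constraint you mention control the transition cost and the $x$-localization, but say nothing about $\bar{\eta}$. Without this bound the passage to the limit along a subsequence $L_j\to\infty$ can fail to produce a heteroclinic between two translates, so this is the step that must be supplied.

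A smaller point: once $\bar{\eta}$ is bounded, the paper does not pin the interface by an ad hoc constraint either; instead it proves (Proposition \ref{key}) that the sublevel set $\Sigma_{1/2}=\{q\le p/2\}$ has the clean structure $[0,l_-]\cup[l_+,L]$ with $l_+-l_-\le C$ uniformly, $q$ monotone on each piece, and then translates by $l_-$ before taking $L\to\infty$. This structural statement is obtained by adapting Lemma \ref{NoMax} (the ``no interior maximum of $q$'' argument built from the competing map $\hat{u}$ of (\ref{test})), so it is a consequence of minimality rather than an imposed normalization. Your proposal would need an analogue of this step to justify the passage to the limit.
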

Our aim is to show that some of the arguments in the proof of Theorem \ref{main} can be employed to construct a new, nontrivial but elementary, proof of Theorem \ref{scha-th}. As in the proof of Theorem \ref{main} we regard the sought solution of (\ref{system}) as a map $\R\ni x\rightarrow u(x,\cdot)\in \bar{u}+W^{1,2}(\R;\R^m)$, $\bar{u}\in\{\bar{u}_-,\bar{u}_+\}$ that connects the minimizers $\bar{u}_\pm(\cdot-\eta_\pm)$ of the effective potential $\mathcal{W}$. In other words we look at the problem as the problem of the existence of an heteroclinic orbit in the function space $\bar{u}+W^{1,2}(\R;\R^m)$.

Beside the already mentioned papers \cite{abg} and \cite{scha} there are other works dealing with the existence of heteroclinic solutions to PDEs. Solutions homoclinic to zero were considered in \cite{al}. Heteroclinic solutions in periodic domains are the object of \cite{br} and \cite{af3}. In \cite{R}, in a periodic setting, existence of solutions connecting two different periodic functions was established.
\vskip.2cm
We now describe our approach to the proof of Theorem \ref{scha-th} and discuss how we can overcome certain difficulties that prevent from a straightforward application of the direct method of variational calculous. These are: loss of compactness due to translation invariance in the $x$ and $y$ directions and the fact that the solution we are looking for has infinite energy. To deal with these obstructions we consider a bounded strip $\mathcal{R}_L=\{(x,y):x\in(0,L); y\in\R\}$, $L>1$  and, for each $\eta\in\R$, consider the problem
\begin{equation}\label{P}
\begin{split}
&\min_{\mathcal{A}_{L,\eta}}\mathcal{J}({\rm{u}}),\quad \mathcal{J}({\rm{u}})=\int_{\mathcal{R}_L}\Big(W({\rm{u}})+\frac{1}{2}\vert\nabla {\rm{u}}\vert^2\Big)dxdy,\\
&\mathcal{A}_{L,\eta}=\{{\rm{u}}\in W_{\mathrm{loc}}^{1,2}(\mathcal{R}_L;\R^m):{\rm{u}}(\cdot,0)=\bar{u}_-,\;{\rm{u}}(\cdot,L)=\bar{u}_+(\cdot-\eta)\}.
\end{split}
\end{equation}
Working in a bounded strip with imposed Dirichlet conditions removes at once the difficulties mentioned before. But on the other hand raises the problem of understanding the relationship between minimizers $u^{L,\eta}$ of (\ref{P}) and the solution $u$ we are looking for. We regard the minimization problem (\ref{P}) as a first step where we impose to the minimizer to connect two fixed elements of the manifolds of the translates of $\bar{u}_\pm$. We note in passing that what actually matter is the difference $\eta=\eta_+-\eta_-$ rather than the values of $\eta_-$ and $\eta_+$ separately. Indeed a translation in the $y$ direction reduces to the case considered in (\ref{P}).

By mean of the Cut-Off Lemma (see section 2.2 in \cite{af3}) we show that the admissible set $\mathcal{A}_{L,\eta}$ in (\ref{P}) can be restricted to maps that converge to $a_\pm$ as $y\rightarrow\pm\infty$ with a well controlled rate. Then
standard arguments imply that, given $L>1$, there exist a minimizer $u^{L,\eta}\in\mathcal{A}_{L,\eta}$ of problem (\ref{P}) for each $\eta\in\R$ and a map $u^L$ that satisfies the condition
\begin{equation}\label{condition}
\mathcal{J}(u^L)=\min_{\eta\in\R}\mathcal{J}(u^{L,\eta}),
\end{equation}
that we use to determine the value of $\eta$.
This yields a family of maps $u^L,\;L>1$ which, for large $L$, are expected to be good approximations of a translation of the sought solution $u$ in Theorem \ref{scha-th}. Therefore we expect that
\begin{equation}\label{uS-lim}
u(x,y)=\lim_{L_j\rightarrow+\infty}u^{L_j}(x-l_j,y),
\end{equation}
for suitable sequences $L_j,\;l_j,\;j=1,\ldots$ To show that this is indeed the case we need to derive precise point-wise
estimates on $u^L$.

The paper is organized as follows. We prove Theorem \ref{main} In Section \ref{rigidity} and Theorem \ref{scha-th}  in Section \ref{schatzman}. In Section \ref{section21} we study the projection on the manifold of the translates of $\bar{u}\in\{\bar{u}_1,\ldots,\bar{u}_N\}$. In Section \ref{section22} we study the effective potential and show that, away from $\bar{u}_1,\ldots,\bar{u}_N$ and their translates, $\mathcal{W}$ is bounded below by a constant larger than $c_0$. In Section \ref{section23} we discuss two \emph{hamiltonian identities} that are used to derive a particular representation formula for the energy which is essential for obtaining the point-wise estimates needed for completing the proof of Theorem \ref{main} in Section \ref{section24}.
In Section \ref{DUE} we sketch the proof of Theorem \ref{main-0}.
In Section \ref{section31} we prove existence of $u^{L,\eta}$ and $u^L$. In Section \ref{section32} we derive information on the asymptotic behavior of $u^{L,\eta}$ and $u^L$. In Section \ref{section33} we prove that $u^L\in\mathcal{A}_{L,\bar{\eta}}$ for some $\bar{\eta}$ which is bounded independently of $L>1$. This is a key point for the analysis that concludes the proof of Theorem \ref{scha-th} in Section \ref{section34}.

In the following we write $z\in\R^2$ as $z=(x,y)$ with $x,y\in\R$; we denote by $z\cdot\zeta$ the scalar product between $z,\zeta\in\R^d$, $d\geq 1$, by $\langle u,v\rangle$ the standard inner product in $L^2(\R;\R^m)$ and by $\|u\|=\sqrt{\langle u,u\rangle}$ the associated norm. For a measurable set $S\subset\R^d$, $\vert S\vert$ denotes the $d$-dimensional Lebesgue measure of $S$. We denote by $C, k, K$ generic positive constants that may depend on $W, M, \delta, \lambda$. The value of $C, k, K$ may change from line to line.

\section{The proof of Theorem \ref{main}}\label{rigidity}

Since $u$ is, by assumption, a bounded solution of (\ref{system}) and $W\in C^3(\R^m;\R)$ elliptic theory implies that
\begin{equation}\label{C2-bound}
\|u\|_{C^{2,\gamma}(\R^2;\R^m)}\leq M,
\end{equation}
for some $M>0, \gamma\in(0,1)$.
\begin{lemma}\label{point-exp}
There exist $k, K>0$ such that, for $\alpha\in\N^2$, $\alpha_1+\alpha_2\leq 2$, it results
\begin{equation}\label{exp-der-decay}
\vert (D^\alpha(u-\bar{u}))(x,y)\vert\leq Ke^{-k\vert y\vert},\;\;\alpha=(\alpha_1,\alpha_2),\;\;\bar{u}=\bar{u}_1,\ldots,\bar{u}_N,
\end{equation}
\begin{equation}\label{l2-der-norm}
\|(D^\alpha(u-\bar{u}))(x,\cdot)\|\leq\frac{K}{\sqrt{k}},\;\; x\in\R,\;\bar{u}=\bar{u}_1,\ldots,\bar{u}_N.
\end{equation}
\end{lemma}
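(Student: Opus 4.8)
The plan is to exploit the hypothesis (\ref{avoid-a}): outside the strip $|y|\le\lambda$ the minimizer $u$ stays at distance $\ge\delta$ from the zero $a_{\sigma(y)}$ of $W$ that it is supposed to approach. Combined with the nondegeneracy of $a_\pm$ and with $W>0$ away from $\{a_-,a_+\}$, this should force $u(x,y)\to a_{\sigma(y)}$ exponentially in $|y|$, uniformly in $x$. I would proceed in three steps.

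First, I would show a qualitative decay $|u(x,y)-a_{\sigma(y)}|\to0$ as $|y|\to\infty$, uniform in $x$. The mechanism is an energy/density argument: since $u$ is a minimizer and its $C^{2,\gamma}$ norm is bounded by (\ref{C2-bound}), on any unit square $Q$ with center $(x,y)$, $|y|$ large, one has $\mathcal{J}_Q(u)\to0$ — otherwise one could find a sequence of translated squares on which the energy stays bounded below, and, using minimality together with a comparison competitor that interpolates to the constant $a_{\sigma(y)}$ in the $y$-direction (legitimate because on $S_\pm$ the boundary data are controlled by (\ref{avoid-a}) and the Cut-Off Lemma–type truncation already invoked in the paper), contradict that the infimum of energy on $\R^2$ restricted to such a strip is finite. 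Once $\mathcal{J}_Q(u)\to0$, interior elliptic estimates upgrade this to $\|u-a_{\sigma(y)}\|_{C^1(Q)}\to0$, hence pointwise uniform smallness of $u-a_{\sigma(y)}$ and of $\nabla u$ for $|y|$ large.

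Second, with $u$ close to $a_\pm$ and $W_{uu}(a_\pm)$ positive definite, I would linearize: writing $w=u-a_{\sigma(y)}$ on, say, $\{y\ge\lambda+1\}$, we get $\Delta w = W_{uu}(a_+)w + O(|w|^2)$, so for $|w|$ small $\Delta(|w|^2)\ge c|w|^2$ with $c>0$. A standard barrier/comparison argument with the subsolution $|w|^2$ against $Ke^{-2k(y-\lambda-1)}$ (constant in $x$, using the uniform bound on $|w|$ on the line $y=\lambda+1$ from Step 1 and boundedness of $u$) yields $|u(x,y)-a_+|\le Ke^{-ky}$ for $y\ge\lambda+1$, uniformly in $x$; symmetrically for $y\le-\lambda-1$, and on the bounded strip $|y|\le\lambda+1$ the estimate (\ref{exp-der-decay}) for $\alpha=0$ is trivial after adjusting $K$. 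The derivative bounds $\alpha_1+\alpha_2\le2$ then follow from interior Schauder estimates applied to $w$ on unit balls, since $|w|$ and the right-hand side $W_u(u)-W_u(a_\pm)=W_{uu}(a_\pm)w+O(|w|^2)$ both decay like $e^{-k|y|}$; differentiating the equation once more and invoking $W\in C^3$ handles $\alpha_1+\alpha_2=2$.

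Third, I would replace $a_{\sigma(y)}$ by $\bar u\in\{\bar u_1,\dots,\bar u_N\}$: by (\ref{n-bound}) each $\bar u$ and its derivatives up to second order satisfy $|D^\alpha(\bar u(y)-a_{\sigma(y)})|\le\bar Ke^{-\bar k|y|}$, so (\ref{exp-der-decay}) for $u-\bar u$ is obtained from the estimate for $u-a_{\sigma(y)}$ by the triangle inequality with $k:=\min(k,\bar k)$ and an enlarged $K$. Finally, (\ref{l2-der-norm}) is immediate: $\|(D^\alpha(u-\bar u))(x,\cdot)\|^2\le\int_\R K^2e^{-2k|y|}\,dy = K^2/k$, giving the stated bound $K/\sqrt{k}$.

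The main obstacle is Step 1 — getting the \emph{uniform-in-$x$} qualitative decay. Pointwise smallness of $u-a_{\sigma(y)}$ for large $|y|$ is what makes the linearization in Step 2 legitimate, and the only tool available is minimality together with hypothesis (\ref{avoid-a}); one has to be careful that the comparison competitor used to bound the energy on far-away squares genuinely has controlled energy, which is where the exponential control on admissible maps (via the Cut-Off Lemma) and the quadratic lower bound $W(z)\ge c\,\mathrm{dist}(z,\{a_-,a_+\})^2$ near the wells do the work. Steps 2 and 3 are then routine barrier and Schauder arguments.
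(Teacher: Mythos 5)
The paper's own proof of this lemma is short: from (\ref{avoid-a}), the minimizer $u$ falls into the scope of Theorem~1.2 of \cite{f2} or Theorem~3.1 of \cite{af4} (density estimates for vector minimizers), which directly yield the exponential decay $\vert u(x,y)-a_{\sigma(y)}\vert\le Ke^{-k\vert y\vert}$ uniformly in $x$; (\ref{n-bound}) and the triangle inequality then give (\ref{v-estimate}), interior elliptic regularity upgrades to the derivatives, and integrating in $y$ gives (\ref{l2-der-norm}). Your Steps~2 and~3 reproduce the latter, routine part of that argument faithfully. Where you diverge is that you attempt to \emph{re-derive} the key pointwise estimate rather than cite it, and that is exactly where your sketch has a genuine gap.

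Specifically, Step~1 does not close as written. You want to show by contradiction that $\mathcal{J}_Q(u)\to0$ on unit squares $Q$ centred at $(x,y)$ with $\vert y\vert\to\infty$, arguing that a sequence of squares with energy bounded below would contradict finiteness of energy on a strip. But a sequence $(x_j,y_j)$ witnessing the failure of uniform decay may escape to infinity in $x$ as well as $y$; nothing forces it to stay in any fixed column $[x_0,x_0+1]\times\R$, so bounded energy on one strip gives no contradiction. (Moreover, the per-strip energy bound you are implicitly using is Lemma~\ref{upper-bound}, which in the paper is proved \emph{after} this lemma and relies on it through Lemma~\ref{extend}, so one has to be careful about circularity.) Invoking the Cut-Off Lemma here is also misplaced: in this paper that lemma is used in Section~\ref{schatzman} for competitors in a bounded strip with Dirichlet data near $a_\pm$, not for unconstrained whole-plane minimizers; and the competitor you describe, interpolating to $a_{\sigma(y)}$ in the $y$-direction, requires already knowing that the boundary data on the square are close to $a_{\sigma(y)}$, which is what you are trying to prove. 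To actually obtain the uniform-in-$x$ qualitative decay you would either have to reprove the density estimate of \cite{f2} or \cite{af4}, or run a translation--compactness argument (shift by $(x_j,y_j)$, extract a limiting bounded minimizer $u_\infty$ with $\vert u_\infty-a_-\vert\ge\delta$ everywhere and $\vert u_\infty(0,0)-a_+\vert\ge\epsilon$, then invoke a Liouville-type result), but that limiting step is itself nontrivial and is not indicated in your proposal. The paper's choice to cite the density-estimate theorems is precisely to avoid having to carry out this part from scratch.
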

\begin{proof}
From (\ref{avoid-a}) we see that the minimizer $u$ satisfies  the hypothesis of Theorem 1.2 in \cite{f2} or Theorem 3.1 in \cite{af4} with respect to $a_-$ in the half space $S_-=\{(x,y):y\geq\lambda\}$ and with respect to $a_+$ in $S_+=\{(x,y):y\leq-\lambda\}$. It follows
\begin{equation}\label{FirstSta}
\vert u(x,y)-a_{\sigma(y)}\vert\leq K e^{-k\vert y\vert},
\end{equation}
for some constants $k, K>0$. Therefore from
 (\ref{n-bound}) we obtain
\begin{equation}\label{v-estimate}
\vert u(x,y)-\bar{u}(y)\vert\leq K e^{-k\vert y\vert},\;\;(x,y)\in\R^2,\;\bar{u}=\bar{u}_1,\ldots,\bar{u}_N.
\end{equation}
This estimate and elliptic interior regularity imply (\ref{exp-der-decay}).
The bound (\ref{l2-der-norm}) is a plain consequence of (\ref{exp-der-decay}). The proof is complete.
\end{proof}
A simple and useful consequence of the estimate (\ref{exp-der-decay}) is that in the Definition \ref{definition} of minimality of $u$ we can extend the class of sets $\Omega$ to include strips aligned with the $y$ axis.
\begin{lemma}\label{extend}
Let $u$ the minimizer in Lemma \ref{point-exp}. Given $x_0\in\R$ and $L>0$ let $\mathcal{R}_L(x_0):=(x_0,x_0+L)\times\R$. Then
\[\mathcal{J}_{\mathcal{R}_L(x_0)}(u)\leq\mathcal{J}_{\mathcal{R}_L(x_0)}(v),\]
for all $v\in u+W^{1,2}(\R^2;\R^m)$ that satisfy
\[v(x_0,y)=u(x_0,y),\;\;v(x_0+L,y)=u(x_0+L,y),\]
and
\[\vert v(x,y)-a_{\sigma(y)}\vert,\,\vert(\nabla u)(x,y)\vert\leq K e^{-k\vert y\vert}.\]
\end{lemma}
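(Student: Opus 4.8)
The plan is to reduce the claim to the standard notion of minimality from Definition~\ref{definition} by an exhaustion argument that trades the infinite strip $\mathcal{R}_L(x_0)$ for a family of large bounded rectangles, controlling the error introduced near $y=\pm\infty$ by the exponential decay (\ref{exp-der-decay}). First I would fix a competitor $v\in u+W^{1,2}(\R^2;\R^m)$ as in the statement; note that $w:=v-u\in W^{1,2}(\R^2;\R^m)$ vanishes on the two vertical lines $x=x_0$ and $x=x_0+L$, and that both $u$ and $v$ converge to $a_{\sigma(y)}$ at the exponential rate $Ke^{-k\vert y\vert}$, with $\nabla u$ likewise exponentially small; by the same elliptic bootstrap used in Lemma~\ref{point-exp}, $W(u), W(v)$ and $\vert\nabla u\vert^2$ are all $O(e^{-2k\vert y\vert})$ on $\mathcal{R}_L(x_0)$, so $\mathcal{J}_{\mathcal{R}_L(x_0)}(u)$ and $\mathcal{J}_{\mathcal{R}_L(x_0)}(v)$ are finite and the difference $\mathcal{J}_{\mathcal{R}_L(x_0)}(v)-\mathcal{J}_{\mathcal{R}_L(x_0)}(u)$ is an absolutely convergent integral.

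Next I would introduce, for $T>0$, the bounded Lipschitz rectangle $\Omega_T:=(x_0,x_0+L)\times(-T,T)$ and a cut-off competitor $v_T$ that agrees with $v$ on $\Omega_{T}$, equals $u$ outside $\Omega_{2T}$, and interpolates in the region $T\le\vert y\vert\le 2T$ by $v_T=u+\chi_T\,w$ for a smooth function $\chi_T(y)$ with $\chi_T=1$ for $\vert y\vert\le T$, $\chi_T=0$ for $\vert y\vert\ge 2T$ and $\vert\chi_T'\vert\le 2/T$. Then $v_T=u$ on $\partial\Omega_{2T}$, so Definition~\ref{definition} applies on $\Omega:=\Omega_{2T}$ and gives $\mathcal{J}_{\Omega_{2T}}(u)\le\mathcal{J}_{\Omega_{2T}}(v_T)$. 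The interiors $\vert y\vert\le T$ contribute exactly $\mathcal{J}_{\Omega_T}(v)-\mathcal{J}_{\Omega_T}(u)$, so rearranging yields
\[
\mathcal{J}_{\Omega_T}(u)-\mathcal{J}_{\Omega_T}(v)\le\big(\mathcal{J}_{\Omega_{2T}\setminus\Omega_T}(v_T)-\mathcal{J}_{\Omega_{2T}\setminus\Omega_T}(u)\big).
\]
On the annular strip $T\le\vert y\vert\le 2T$ one estimates $\vert\nabla v_T\vert^2\le C(\vert\nabla u\vert^2+\vert\nabla w\vert^2+T^{-2}\vert w\vert^2)$ and, using that $W$ is $C^3$ with a nondegenerate zero so $W(v_T)\le C(\vert u-a_{\sigma(y)}\vert^2+\vert w\vert^2)$ there, all the relevant quantities are bounded by $Ce^{-2k\vert y\vert}+C\vert w\vert^2+C\vert\nabla w\vert^2+CT^{-2}\vert w\vert^2$; since $w\in W^{1,2}(\R^2;\R^m)$, the integral of $\vert w\vert^2+\vert\nabla w\vert^2$ over $\{T\le\vert y\vert\le 2T,\ x\in(x_0,x_0+L)\}$ tends to $0$ as $T\to\infty$ by absolute continuity of the Lebesgue integral, and the $e^{-2k\vert y\vert}$ term is integrable, so the right-hand side above tends to $0$.

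Finally, letting $T\to\infty$ and using monotone/dominated convergence of $\mathcal{J}_{\Omega_T}(u)$ and $\mathcal{J}_{\Omega_T}(v)$ to $\mathcal{J}_{\mathcal{R}_L(x_0)}(u)$ and $\mathcal{J}_{\mathcal{R}_L(x_0)}(v)$ respectively — legitimate by the integrability established in the first step — gives $\mathcal{J}_{\mathcal{R}_L(x_0)}(u)\le\mathcal{J}_{\mathcal{R}_L(x_0)}(v)$, which is the assertion. The main obstacle is the bookkeeping in the annular estimate: one must be careful that the decay hypothesis on $v$ (only $\vert v-a_{\sigma(y)}\vert\le Ke^{-k\vert y\vert}$, not a gradient bound) is enough, which is why I write the energy of $v_T$ in terms of $u$ and $w=v-u$ and exploit $w\in W^{1,2}$ plus the smallness of $\chi_T'$, rather than trying to bound $\nabla v$ directly; once that is organized, every term is either exponentially small or the tail of a convergent integral.
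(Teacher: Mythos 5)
Your argument is correct, and it is exactly the kind of cutoff/exhaustion proof one would expect here. Note, though, that the paper does not actually spell out a proof for this lemma; it simply declares it ``elementary'' and refers to Lemma 6.2 of \cite{af4}, which carries out the same reduction from an unbounded strip to bounded rectangles via a cutoff near $y=\pm\infty$. Your write-up is therefore not an alternative route so much as a self-contained reconstruction of the argument the paper is invoking by citation. The key points are all handled properly: you justify finiteness of $\mathcal{J}_{\mathcal{R}_L(x_0)}(u)$ and $\mathcal{J}_{\mathcal{R}_L(x_0)}(v)$ before passing to the limit, you build the test map $v_T=u+\chi_T w$ so that it matches $u$ on $\partial\Omega_{2T}$ (on the vertical sides because $w$ vanishes there, on the horizontal sides because $\chi_T$ does), and you correctly observe that in the annulus the only available control on $v$ is the $L^\infty$ decay $|v-a_{\sigma(y)}|\le Ke^{-k|y|}$, so the gradient of $v_T$ must be estimated through $\nabla u$, $\nabla w$, and $\chi_T'w$ rather than through $\nabla v$; since $w\in W^{1,2}(\R^2;\R^m)$, the tail integrals of $|w|^2$ and $|\nabla w|^2$ over the annulus vanish as $T\to\infty$, and the remaining terms are exponentially small, which is precisely what closes the argument.
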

\begin{proof}
The proof is elementary. See Lemma 6.2 in \cite{af4}.
\end{proof}
The minimality of $u$ imply an upper bound for the energy
\begin{lemma}\label{upper-bound}
There exists $C_0>0$ independent of $x_0\in\R$ and $L>0$ such that
\begin{equation}\label{upper-bound-1}
\mathcal{J}_{\mathcal{R}_L(x_0)}(u)\leq c_0L+C_0,
\end{equation}
where $c_0$ is defined in (\ref{c0}).
\end{lemma}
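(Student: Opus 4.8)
The plan is to combine the extended minimality of $u$ on vertical strips (Lemma~\ref{extend}) with an explicit test map that, in the bulk of the strip, equals the fixed one–dimensional profile $\bar u:=\bar u_1$ — whose energy per unit $x$–length is exactly $c_0$ — and interpolates back to $u$ in two thin boundary layers. It suffices to treat $L\ge 2$, the case $0<L<2$ following from the nonnegativity of the integrand, the inclusion $\mathcal R_L(x_0)\subset\mathcal R_2(x_0+L-2)$ and the case $L=2$, after possibly enlarging $C_0$. Fix $\chi\in C^\infty([0,1];[0,1])$ with $\chi(0)=1$, $\chi(1)=0$, put $w_0(y):=u(x_0,y)-\bar u(y)$ and $w_L(y):=u(x_0+L,y)-\bar u(y)$, and define $v:=u$ outside $\mathcal R_L(x_0)$ and, on $\mathcal R_L(x_0)$,
\[v(x,y)=\bar u(y)+\chi(x-x_0)\,w_0(y)\ \text{ for }x_0<x<x_0+1,\qquad v(x,y)=\bar u(y)\ \text{ for }x_0+1\le x\le x_0+L-1,\]
\[v(x,y)=\bar u(y)+\chi(x_0+L-x)\,w_L(y)\ \text{ for }x_0+L-1<x<x_0+L.\]
Then $v(x_0,\cdot)=u(x_0,\cdot)$ and $v(x_0+L,\cdot)=u(x_0+L,\cdot)$, while $v-u$ is supported in $\mathcal R_L(x_0)$ and there equals $(\bar u-u)$ plus a cut–off multiple of $u(x_0,\cdot)-\bar u$ (or of $u(x_0+L,\cdot)-\bar u$), so by (\ref{exp-der-decay})--(\ref{v-estimate}) it is $O(e^{-k|y|})$ together with its gradient; hence $v\in u+W^{1,2}(\R^2;\R^m)$, and $|v(x,y)-a_{\sigma(y)}|\le|\bar u(y)-a_{\sigma(y)}|+|w_0(y)|\le Ke^{-k|y|}$ by (\ref{n-bound}) and (\ref{v-estimate}), and similarly $|\nabla v(x,y)|\le Ke^{-k|y|}$. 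Thus $v$ is admissible in Lemma~\ref{extend}.

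It remains to bound $\mathcal J_{\mathcal R_L(x_0)}(v)$. On the middle rectangle $v(x,y)=\bar u(y)$, so its contribution is exactly $(L-2)\int_\R(\tfrac12|\bar u'|^2+W(\bar u))\,dy=(L-2)c_0$. On the left layer, $\nabla v=(\chi'w_0,\ \bar u'+\chi w_0')$ gives $\tfrac12|\nabla v|^2\le\tfrac12\|\chi'\|_\infty^2|w_0|^2+|\bar u'|^2+|w_0'|^2$, and integrating over $(x_0,x_0+1)\times\R$ and using the uniform bounds $\|w_0\|,\|w_0'\|\le K/\sqrt k$ from (\ref{l2-der-norm}) together with $\int_\R|\bar u'|^2\le 2c_0$ yields a bound independent of $x_0,L$. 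For the potential, $|v|\le\sup_\R|\bar u|+K$ on this layer (by (\ref{v-estimate})) and $W\in C^3$, so $|W(v)-W(\bar u)|\le\Lambda|w_0|$ with $\Lambda:=\sup_{|\zeta|\le\sup_\R|\bar u|+K}|W_u(\zeta)|$, and $\int_\R|w_0(y)|\,dy\le 2K/k$ by (\ref{v-estimate}); hence the potential contribution of the layer is $\le\int_\R W(\bar u)+2\Lambda K/k\le c_0+C$. The right layer is estimated identically. Summing the three pieces gives $\mathcal J_{\mathcal R_L(x_0)}(v)\le c_0L+C_0$ with $C_0$ independent of $x_0$ and $L$, and Lemma~\ref{extend} then yields (\ref{upper-bound-1}).

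The argument is essentially bookkeeping; the only substantive points are the choice of competitor (the profile $\bar u$ in the interior, transition to $u$ on unit–width layers) and the observation that the layer costs are controlled \emph{uniformly} in $x_0$ and $L$ by the uniform $W^{1,2}$–bound of Lemma~\ref{point-exp} and the $C^3$–regularity of $W$. No genuine obstacle arises.
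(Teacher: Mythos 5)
Your proposal is correct and follows essentially the same route as the paper: you build a competitor that agrees with $u$ at $x=x_0$ and $x=x_0+L$, equals the one-dimensional profile $\bar u$ on the interior $[x_0+1,x_0+L-1]$, and interpolates in two unit-width layers whose cost is controlled uniformly via the exponential/$W^{1,2}$ bounds of Lemma~\ref{point-exp}, then invokes Lemma~\ref{extend}. The only (cosmetic) differences are that you use a generic smooth cut-off $\chi$ where the paper uses the affine interpolation $\chi(t)=1-t$, and that you spell out the reduction to $L\ge2$, which the paper leaves implicit.
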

\begin{proof}
Fix $\bar{u}\in\{\bar{u}_1,\ldots,\bar{u}_N\}$, assume $L>2$ and define a competing map $v$ by setting
\begin{equation}\label{comp-map}
v(x,\cdot)=\left\{\begin{array}{l}
u(x,\cdot),\;\;x\in(-\infty,x_0]\cup[x_0+L,+\infty),\\
(1-(x-x_0))u(x_0,\cdot)+(x-x_0)\bar{u}(\cdot),\;\;x\in(x_0,x_0+1),\\
\bar{u},\;\;x\in[x_0+1,x_0+L-1],\\
(x_0+L-x)\bar{u}(\cdot)+(1+x-x_0-L)u(x_0+L,\cdot),\\\quad x\in(x_0+L-1,x_0+L).
\end{array}\right.
\end{equation}
From this definition, (\ref{n-bound}) and Lemma \ref{point-exp} it follows that, for $x\in[x_0,x_0+1]\cup[x_0+L-1,x_0+L]$, $v$ satisfies
\begin{equation}\label{v-der}
\vert v_x(x,y)\vert^2, \vert v_y(x,y)\vert^2\leq K^2 e^{-2k\vert y\vert}.
\end{equation}
On the other hand $v$ is bounded and converges exponentially to $a_\pm$ as $y\rightarrow\pm\infty$ therefore we have
\[\lim_{y\rightarrow\pm\infty}W(v(x,\cdot))=0,\;\;x\in[x_0,x_0+1]\cup[x_0+L-1,x_0+L].\]
with exponential convergence. This and (\ref{v-der}) imply
\[\mathcal{J}_{[x_0,x_0+1]\cup[x_0+L-1,x_0+L]\times\R}(v)\leq C_0\]
for some constant $C_0>0$
and therefore
\[\mathcal{J}_{\mathcal{R}_L(x_0)}(v)\leq c_0L+C_0.\]
To conclude the proof we note that (\ref{comp-map}) implies $v(x_0,\cdot)=u(x_0,\cdot)$ and $v(x_0+L,\cdot)=u(x_0+L,\cdot)$. Then  Lemma \ref{extend} implies
\[\mathcal{J}_{\mathcal{R}_L(x_0)}(u)\leq \mathcal{J}_{\mathcal{R}_L(x_0)}(v)\leq c_0L+C_0.\]
The proof is complete.
\end{proof}

Lemma \ref{upper-bound} implies an upper bound for the kinetic energy.
\begin{lemma}\label{kineticUpperBound}
It results
\begin{equation}\label{KUpperBound}
\int_\R\int_\R\vert u_x\vert^2{d} y{d} x\leq C_0.
\end{equation}
\begin{equation}\label{KineticLim}
\lim_{x\rightarrow\pm\infty}\int_\R\vert u_x(x,y)\vert^2{d} y=0.
\end{equation}
\end{lemma}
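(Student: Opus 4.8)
The plan is to slice the energy in vertical strips and compare each slice with the one–dimensional minimum $c_0$. Writing $\mathcal{R}_L(x_0)=(x_0,x_0+L)\times\R$ and applying Fubini, one has
\[
\mathcal{J}_{\mathcal{R}_L(x_0)}(u)=\int_{x_0}^{x_0+L}\Big(\tfrac12\|u_x(x,\cdot)\|^2+J_\R(u(x,\cdot))\Big)\,dx .
\]
By Lemma \ref{point-exp} (in particular (\ref{v-estimate})), for every fixed $x$ the slice $u(x,\cdot)$ tends exponentially to $a_\pm$ as $y\to\pm\infty$, so $u(x,\cdot)\in\mathcal{A}$; moreover $J_\R(u(x,\cdot))$ is finite (the gradient part is controlled by (\ref{l2-der-norm}) and the potential part by $W(u(x,y))\le C|u(x,y)-a_{\sigma(y)}|^2\le CK^2e^{-2k|y|}$), depends continuously on $x$, and satisfies $J_\R(u(x,\cdot))\ge c_0$ by minimality of the $\bar u_j$. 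Hence
\[
\tfrac12\int_{x_0}^{x_0+L}\|u_x(x,\cdot)\|^2\,dx
=\mathcal{J}_{\mathcal{R}_L(x_0)}(u)-\int_{x_0}^{x_0+L}J_\R(u(x,\cdot))\,dx
\le \mathcal{J}_{\mathcal{R}_L(x_0)}(u)-c_0L\le C_0 ,
\]
the last inequality being Lemma \ref{upper-bound}. Letting $x_0\to-\infty$ and $L\to+\infty$ and using monotone convergence gives $\int_\R\int_\R|u_x|^2\,dy\,dx\le 2C_0$, which is (\ref{KUpperBound}) up to relabelling the constant.

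For (\ref{KineticLim}) I would argue that $f(x):=\|u_x(x,\cdot)\|^2=\int_\R|u_x(x,y)|^2\,dy$ is a nonnegative, integrable, and uniformly Lipschitz function on $\R$, hence must vanish at $\pm\infty$. Integrability and nonnegativity are just (\ref{KUpperBound}). For the Lipschitz bound note that, since $\bar u$ is independent of $x$, the estimate (\ref{exp-der-decay}) yields $|u_x(x,y)|,|u_{xx}(x,y)|\le Ke^{-k|y|}$; this exponential domination legitimizes differentiating under the integral sign, giving $f'(x)=2\int_\R u_x(x,y)\cdot u_{xx}(x,y)\,dy$ and therefore $|f'(x)|\le 2K^2\int_\R e^{-2k|y|}\,dy=2K^2/k$. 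If $f$ did not tend to $0$ along, say, $x\to+\infty$, there would be $\varepsilon>0$ and $x_n\to+\infty$ with $f(x_n)\ge\varepsilon$; the uniform Lipschitz bound then forces $f\ge\varepsilon/2$ on intervals of fixed length $\varepsilon k/(2K^2)$ centred at the $x_n$ (which may be taken spaced apart), contradicting $f\in L^1(\R)$. The same argument applies as $x\to-\infty$, and (\ref{KineticLim}) follows.

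No step here is a real obstacle; the only points that deserve care are the two places where Lemma \ref{point-exp} is invoked: first, the inequality $J_\R(u(x,\cdot))\ge c_0$ relies on each slice lying in the admissible class $\mathcal{A}$, i.e. on the exponential convergence (\ref{v-estimate}); and second, differentiating $f$ and bounding $f'$ rests on the exponential derivative decay (\ref{exp-der-decay}). Both are already established, so the proof is a short computation.
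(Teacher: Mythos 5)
Your proof is correct and follows essentially the same route as the paper: bound the slice‑wise kinetic energy via Lemma~\ref{upper-bound} together with $J_\R(u(x,\cdot))\ge c_0$, then deduce the limit from $L^1$‑integrability plus a uniform Lipschitz bound on $x\mapsto\|u_x(x,\cdot)\|^2$ coming from (\ref{exp-der-decay}). (Your version is in fact slightly more careful: you keep the $\tfrac12$ in front of the kinetic term and correctly arrive at $\int\int|u_x|^2\le 2C_0$, whereas the paper's displayed chain drops that factor; both are harmless since $C_0$ is generic.)
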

\begin{proof}
The minimality of $\bar{u}_1,\ldots,\bar{u}_N$ implies
\[J_\R(u(x,\cdot))-c_0\geq 0,\;\;x\in[x_0,x_0+L]\]
and therefore from (\ref{upper-bound-1}) we obtain
\[\int_{x_0}^{x_0+L}\int_\R\vert u_x\vert^2{d} y{d} x\leq\int_{x_0}^{x_0+L}(J_\R(u(x,\cdot))-c_0){d} x+\int_{x_0}^{x_0+L}\int_\R\vert u_x\vert^2{d} y{d} x\leq C_0\]
and, since this inequality is valid for all $x_0\in\R$ and $L>0$, (\ref{KUpperBound}) follows.
If (\ref{KineticLim}) does not hold we have
\[\int_\R\vert u_x(x_j,y)\vert^2{d} y\geq\epsilon>0\]
along a sequence $x_j,\; j=1,\ldots$ that converges either to $-\infty$ or to $+\infty$. Lemma \ref{point-exp} implies
\[\vert\frac{d}{d x}\int_\R\vert u_x(x,y)\vert^2{d} y\vert\leq2\|u_x(x,\cdot)\|u_{xx}(x,\cdot)\|\leq\frac{2K^2}{k}.\]
It follows that the map $x\rightarrow\int_\R\vert u_x(x,y)\vert^2{d} y$ is Lipschitz continuous and we have
\[\int_\R\vert u_x(x,y)\vert^2{d} y\geq\frac{\epsilon}{2},\;\;x\in(x_j-\delta,x_j+\delta)\]
for some $\delta>0$ independent of $j=1,\ldots$. By passing to a subsequence we can assume $\vert x_{j+1}-x_j\vert\geq2\delta$ and conclude (assuming for example that $x_j\rightarrow+\infty$)
\[j\delta\epsilon\leq\int_{x_1-\delta}^{x_j+\delta}\int_\R\vert u_x(x,y)\vert^2{d} y\leq C_0\]
that contradicts (\ref{KUpperBound}) for large $j$. This concludes the proof.
\end{proof}

\subsection{The decomposition of a map ${\rm{u}}$ near a translate of $\bar{u}_j$.}\label{section21}
Recall that $\|\cdot\|$ denote the norm in $L^2(\R;\R^m)$. We have (see also Lemma 2.1 in \cite{scha})
\begin{lemma}\label{lemmaw}
Let ${\rm{u}}:\R\rightarrow\R^m$ be a map that satisfy
\begin{equation}\label{inW12}
{\rm{u}}\in\bar{u}_1+ L^2(\R,\R^m).\footnote{The space $\bar{u}_1+ L^2(\R,\R^m)$ does not change if $\bar{u}_1$ is replaced by any smooth map with the same asymptotic behavior.}
\end{equation}
Then
\begin{enumerate}
\item  there exist ${h}\in\R$  and $\bar{u}\in\{\bar{u}_1,\ldots,\bar{u}_N\}$ such that
\begin{equation}\label{etabar}
\|{\rm{u}}-\bar{u}(\cdot-{h})\|=\min_j\min_{r}\|{\rm{u}}-\bar{u}_j(\cdot-{r})\|.
\end{equation}
Moreover it results
\begin{equation}\label{orthogonal}
\langle {\rm{u}}-\bar{u}(\cdot-{h}),\bar{u}^\prime(\cdot-{h})\rangle=0.
\end{equation}
\item There exists $q^0>0$ such that, for $q:=\|{\rm{u}}-\bar{u}(\cdot-{h})\|\leq q^0$,  ${h}$ and $\bar{u}$ are uniquely determined.
\item  For $q<q^0$, ${h}$ is a $C^2$ function of ${\rm{u}}$ and it results

\begin{equation}\label{derivative}
\begin{split}
&(D_{\rm{u}}h)w=-\frac{\langle w,\bar{u}^\prime(\cdot-h)\rangle}{\|\bar{u}^\prime\|^2-\langle {\rm{u}}-\bar{u}(\cdot-h),\bar{u}^{\prime\prime}(\cdot-h)\rangle}.
\end{split}
\end{equation}
\end{enumerate}
\end{lemma}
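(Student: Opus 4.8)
The statement has three parts, which I would treat in order: (i) existence of a minimizing pair $(h,\bar u)$ together with the orthogonality relation; (ii) local uniqueness for small $q$; (iii) $C^2$-dependence and the formula for $D_{\rm u}h$. For part (i), since ${\rm u}-\bar u_1\in L^2$ and each $\bar u_j-\bar u_1\in L^2$ (they share the same asymptotics $a_\pm$ by (\ref{n-bound})), the difference ${\rm u}-\bar u_j(\cdot-r)$ lies in $L^2$ for every $j$ and every $r\in\R$, so the quantities $\|{\rm u}-\bar u_j(\cdot-r)\|$ are finite. For fixed $j$, the map $r\mapsto\|{\rm u}-\bar u_j(\cdot-r)\|^2$ is continuous; as $r\to\pm\infty$ the translate $\bar u_j(\cdot-r)$ converges weakly to a constant ($a_-$ or $a_+$) and one checks $\liminf_{|r|\to\infty}\|{\rm u}-\bar u_j(\cdot-r)\|^2 \ge \|{\rm u}-\bar u_j(\cdot-r_0)\|^2$ fails only if ${\rm u}$ itself is close to a constant, so on a bounded set the infimum over $r$ is attained; then minimize over the finite set $j\in\{1,\dots,N\}$. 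The orthogonality (\ref{orthogonal}) is the first-order condition: differentiating $r\mapsto\|{\rm u}-\bar u(\cdot-r)\|^2$ and setting the derivative to zero at $r=h$ gives $\langle {\rm u}-\bar u(\cdot-h),\bar u'(\cdot-h)\rangle=0$, using that $\frac{d}{dr}\bar u(\cdot-r)=-\bar u'(\cdot-r)$ and that the exponential decay in (\ref{n-bound}) justifies differentiation under the integral sign.

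For part (ii), the idea is an implicit-function/quantitative-nondegeneracy argument. Consider $F(r):=\langle {\rm u}-\bar u(\cdot-r),\bar u'(\cdot-r)\rangle$ for a fixed candidate $\bar u=\bar u_j$. Then
\[
F'(r)=\|\bar u'(\cdot-r)\|^2-\langle {\rm u}-\bar u(\cdot-r),\bar u''(\cdot-r)\rangle,
\]
and at a point where $q=\|{\rm u}-\bar u(\cdot-h)\|$ is small this is bounded below by $\|\bar u'\|^2-q\,\|\bar u''\|>0$, hence $F$ is strictly monotone near $h$, so within each manifold $\{\bar u_j(\cdot-r)\}$ the critical point and the local minimizer are unique. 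To rule out competition between distinct manifolds $j\ne j'$: since $\|\bar u_j-\bar u_{j'}(\cdot-r)\|$ is bounded below by a positive constant $2d_0$ uniformly in $r$ (the two orbits are distinct, so after translating one, the $L^2$-distance to the other stays away from $0$ — this follows because if it were small along a sequence $r_n$ one could extract a limit giving a coincidence or a constant limit contradicting minimality of $\bar u_{j'}$), for $q<q^0:=d_0$ the map ${\rm u}$ is strictly closer to the manifold containing $\bar u$ than to any other, so $j$ is determined. The choice $q^0=\min(d_0,\ \|\bar u'\|^2/(2\max_j\|\bar u_j''\|))$ works.

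For part (iii), apply the implicit function theorem to $F(r,{\rm u})=\langle {\rm u}-\bar u(\cdot-r),\bar u'(\cdot-r)\rangle$, viewed as a map from (a neighborhood in) $L^2$ times $\R$ to $\R$. It is $C^2$ in $r$ (smoothness of $\bar u$ plus exponential decay) and affine, hence $C^\infty$, in ${\rm u}$; and $\partial_r F(h,{\rm u})=\|\bar u'\|^2-\langle {\rm u}-\bar u(\cdot-h),\bar u''(\cdot-h)\rangle\ne 0$ by the bound just established. So $h=h({\rm u})$ is $C^2$. Differentiating $F(h({\rm u}),{\rm u})\equiv 0$ in the direction $w$ gives
\[
\partial_r F\cdot (D_{\rm u}h)w + \langle w,\bar u'(\cdot-h)\rangle = 0,
\]
since the ${\rm u}$-derivative of $F$ in direction $w$ is exactly $\langle w,\bar u'(\cdot-h)\rangle$; solving for $(D_{\rm u}h)w$ yields (\ref{derivative}).

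**Main obstacle.** The delicate point is part (i)/(ii): showing the infimum over $r$ is attained (no escape of mass to $r=\pm\infty$) and, for (ii), the uniform positive separation $\|\bar u_j-\bar u_{j'}(\cdot-r)\|\ge 2d_0$ between distinct translated orbits. Both hinge on a compactness/limiting argument using the exponential convergence in (\ref{n-bound}) and the strict minimality (hence isolation up to translation) of each $\bar u_j$; everything after that is a routine implicit-function-theorem computation.
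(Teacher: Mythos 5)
Your overall strategy matches the paper's: first-order optimality yields the orthogonality (\ref{orthogonal}), and the implicit function theorem applied to $f(h,{\rm u})=\langle {\rm u}-\bar u(\cdot-h),\bar u'(\cdot-h)\rangle=0$ gives $C^2$ dependence and the formula (\ref{derivative}). Part (iii) of your proposal is essentially the paper's proof verbatim. However, parts (i) and (ii) contain genuine gaps.

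In part (i), your justification that the infimum over $r$ is attained is confused. You suggest that $\liminf_{|r|\to\infty}\|{\rm u}-\bar u_j(\cdot-r)\|^2 \ge \|{\rm u}-\bar u_j(\cdot-r_0)\|^2$ might "fail only if ${\rm u}$ itself is close to a constant," but in fact $\|{\rm u}-\bar u_j(\cdot-r)\|\to+\infty$ as $|r|\to\infty$ for every ${\rm u}\in\bar u_1+L^2$: on an interval of length $\sim |r|$ the maps $\bar u_j(\cdot)$ and $\bar u_j(\cdot-r)$ sit near opposite wells, giving $\|\bar u_j(\cdot-r)-\bar u_j\|\gtrsim |a|\sqrt{|r|}$, and coercivity then follows from the triangle inequality. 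This is the paper's estimate (\ref{compact}); the weak-convergence heuristic you invoke does not produce it.

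In part (ii), your monotonicity argument for $F(r)$ only controls $r$ in a small neighborhood of $h$, but the statement requires global uniqueness of $h$ among all minimizers of $r\mapsto\|{\rm u}-\bar u(\cdot-r)\|$. The lower bound $F'(r)\ge\|\bar u'\|^2-q\|\bar u''\|$ is incorrect for $r\ne h$: the cross term is bounded by $\|{\rm u}-\bar u(\cdot-r)\|\,\|\bar u''\|$, and $\|{\rm u}-\bar u(\cdot-r)\|$ grows as $r$ moves away from $h$, not $q$. The paper fixes this with a bootstrapping step, $\|{\rm u}-\bar u(\cdot-r)\|\le q^0+\|\bar u'\|\,|r-h|$, which gives the strict positivity only for $|r-h|\le\|\bar u'\|/(4\|\bar u''\|)$; it then handles the intermediate range $\|\bar u'\|/(4\|\bar u''\|)\le|r-h|\le h_0$ by a separate argument (shrinking $q^0$ so that $q_m:=\min\|\bar u(\cdot-r)-\bar u(\cdot-h)\|$ over that range exceeds $2q^0$), and the large range by coercivity. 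Without this three-regime split your argument does not actually rule out a second global minimizer of $\|{\rm u}-\bar u(\cdot-r)\|$ at moderate distance from $h$. Your treatment of the separation between distinct manifolds $\bar u_i$, $\bar u_j$ is fine and matches the paper's definition of $q^*$.
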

\begin{proof}

For large $\vert{r}_1-{r}_2\vert$ we have with $a=\frac{a_+-a_-}{2}$
\[\|\bar{u}_j(\cdot-{r}_1)-\bar{u}_j(\cdot-{r}_2)\|^2\geq\vert a\vert^2\vert{r}_1-{r}_2\vert,\;\;j=1,\ldots,N,\]
and therefore
\begin{equation}\label{compact}
\|{\rm{u}}-\bar{u}_j(\cdot-{r})\|\geq\vert a\vert\sqrt{\vert{r}\vert}-\|{\rm{u}}-\bar{u}_j\|,\;\;j=1,\ldots,N.
\end{equation}
This proves 1. The fact that (\ref{orthogonal}) holds is standard.

To prove 2. we observe that the assumption that the $N$ connections are distinct implies that the $L^2$ distance between distinct $\bar{u}_i, \bar{u}_j$ and their translates has a positive lower bound
\begin{equation}\label{q*}
q^*=\min_{i\neq j}\min_{r\in\R}\|\bar{u}_i(\cdot)-\bar{u}_j(\cdot-{r})\|>0.
\end{equation}
It follows that, for $q^0\in(0,q^*)$, $\bar{u}$ in (\ref{etabar}) is uniquely determined.

To show that, provided $q^0\in(0,q^*)$ is sufficiently small, for $q\leq q^0$, also ${h}$ is uniquely determined  we observe that from (\ref{compact}) it suffices to show that $\bar{u}(\cdot-{h})$ is the only choice that satisfies (\ref{etabar}) for $\vert{r}\vert\leq{h}_0$ for some large ${h}_0>0$.

For $q\leq q^0$ we have
\[\|{\rm{u}}-\bar{u}(\cdot-{r})\|\leq q+\|\bar{u}(\cdot-{r})-\bar{u}(\cdot-{h})\|
\leq q^0+\|\bar{u}^\prime\|\vert{r}-{h}\vert.\]

It follows
\begin{equation}\label{mi-serve}
\|\bar{u}^\prime\|^2-\langle\bar{u}^{\prime\prime}(\cdot-{r}),{\rm{u}}-\bar{u}(\cdot-{r})\rangle
\geq\frac{\|\bar{u}^\prime\|^2}{2},
\end{equation}
provided $\vert{r}-{h}\vert\leq\frac{\|\bar{u}^\prime\|}{4\|\bar{u}^{\prime\prime}\|}$ and $q\leq q^0\leq
\frac{\|\bar{u}^\prime\|^2}{4\|\bar{u}^{\prime\prime}\|}$.
This and
\begin{equation}\label{v-baru-der}
\begin{split}
&\frac{d}{d{r}}\|{\rm{u}}-\bar{u}(\cdot-{r})\|^2=2\langle\bar{u}^\prime(\cdot-{r}),{\rm{u}}-\bar{u}(\cdot-{r})\rangle\\
&=2\Big(\langle\bar{u}^\prime(\cdot-{r}),{\rm{u}}-\bar{u}(\cdot-{r})\rangle
-\langle\bar{u}^\prime(\cdot-{h}),{\rm{u}}-\bar{u}(\cdot-{h})\rangle\Big)\\
&=2\int_{{h}}^{r}\frac{d}{ds}\langle\bar{u}^\prime(\cdot-{s}),{\rm{u}}-\bar{u}(\cdot-{s})\rangle ds\\
&=2\int_{{h}}^{r}(\|\bar{u}^\prime\|^2-\langle\bar{u}^{\prime\prime}(\cdot-{s}),{\rm{u}}-\bar{u}(\cdot-{s})\rangle) ds,
\end{split}
\end{equation}
yield, for $\vert r-h\vert\leq\frac{\|\bar{u}^\prime\|}{4\|\bar{u}^{\prime\prime}\|}$,
\begin{equation}\label{v-baru-der1}
\begin{split}
&\frac{d}{d{r}}\|{\rm{u}}-\bar{u}(\cdot-{r})\|^2\geq\|\bar{u}^\prime\|^2({r}-{h}),\quad\text{ for }\;\;{r}\geq{h}\\
&\frac{d}{d{r}}\|{\rm{u}}-\bar{u}(\cdot-{r})\|^2\leq\|\bar{u}^\prime\|^2({r}-{h}),\quad\text{ for }\;\;{r}\leq{h}
\end{split}
\end{equation}
and we obtain, for $\vert{r}-{h}\vert\leq\frac{\|\bar{u}^\prime\|}{4\|\bar{u}^{\prime\prime}\|}$ and $q\leq q^0\leq
\frac{\|\bar{u}^\prime\|^2}{4\|\bar{u}^{\prime\prime}\|}$,
\begin{equation}\label{q-qeta}
\|{\rm{u}}-\bar{u}(\cdot-{r})\|^2\geq q^2+\frac{1}{2}\|\bar{u}^\prime\|^2\vert{r}-{h}\vert^2.
\end{equation}
This shows that ${h}$ is the unique solution of (\ref{etabar}) in the interval $\vert{r}-{h}\vert\leq\frac{\|\bar{u}^\prime\|}{4\|\bar{u}^{\prime\prime}\|}$. We have already observed that no choice of $\vert{r}-{h}\vert\geq{h}_0$ satisfies (\ref{etabar}). To conclude the proof of uniqueness it remains to prove that there are no solutions for $\frac{\|\bar{u}^\prime\|}{4\|\bar{u}^{\prime\prime}\|}\leq\vert{r}-{h}\vert\leq{h}_0$. Define
\[q_m:=\min_{\frac{\|\bar{u}^\prime\|}{4\|\bar{u}^{\prime\prime}\|}\leq\vert{r}-{h}\vert\leq{h}_0}
\|\bar{u}(\cdot-{r})-\bar{u}(\cdot-{h})\|\]
then $q_m>0$ and, by reducing $q^0$ if necessary, we can assume that $q_m>2q^0$ then, for $\frac{\|\bar{u}^\prime\|}{4\|\bar{u}^{\prime\prime}\|}\leq\vert{r}-{h}\vert\leq{h}_0$, we have
\[\|{\rm{u}}-\bar{u}(\cdot-{r})\|\geq\|\bar{u}(\cdot-{r})-\bar{u}(\cdot-{h})\|-q\geq q_m-q^0>q^0.\]
This concludes the proof of 2.

Note that (\ref{v-baru-der}) and (\ref{v-baru-der1}) imply that, for $q\leq q^0$, ${h}$ is the unique solution of (\ref{orthogonal}) in the interval $\vert{r}-{h}\vert\leq\frac{\|\bar{u}^\prime\|}{4\|\bar{u}^{\prime\prime}\|}$. Therefore, under these condition, ${h}$ solves (\ref{etabar}) if and only if solves (\ref{orthogonal}). It follows that we can analyze the dependence of ${h}$ on ${\rm{u}}$ by looking at the equation

\[f(h,{\rm{u}}):=\langle {\rm{u}}-\bar{u}(\cdot-{h}),\bar{u}^\prime(\cdot-{h})\rangle=0,\quad\text{ for }\;\;q<q^0.\]  The assumption that $W$ is $C^3$ implies that $f$ is of class $C^2$. Moreover $D_hf(h,{\rm{u}})$ coincides with the left hand side of (\ref{mi-serve}) and does not vanish for $q<q^0$ therefore the implicit function theorem implies that the solution of (\ref{orthogonal}) is a $C^2$ function of ${\rm{u}}$ and (\ref{derivative}) follows by differentiating (\ref{orthogonal}). The proof is complete.
 \end{proof}

 \begin{remark}\label{remark}
 If in (\ref{etabar}) we replace the $L^2$ norm $\|\cdot\|$ with the $W^{1,2}$ norm $\|\cdot\|_1$, then by repeating verbatim the steps in the proof of Lemma \ref{lemmaw}, we establish the analogous of Lemma \ref{lemmaw} for the $W^{1,2}$ norm. In particular we have that, for $q^0>0$ sufficiently small, the condition
 \begin{equation}\label{q01}
 \min_j\min_{r\in\R}\|\mathrm{u}-\bar{u}_j(\cdot-{r})\|_1\leq p,\;\;p\in(0,q^0]
 \end{equation}
 implies the existence of unique ${h}_1$ and $\bar{u}$ that solves (\ref{etabar}) with $\|\cdot\|_1$ instead of $\|\cdot\|$ and $\bar{u}$ does not depend on which norm is used. As expected the difference ${h}-{h}_1$ between the solutions ${h}$ and ${h}_1$ of (\ref{etabar}) in the $L^2$ and $W^{1,2}$ sense converges to zero with $p$.
 \end{remark}
 \begin{lemma}\label{lemmaw1}
 If (\ref{q01}) holds with $p\in(0,q^0]$ and $q^0>0$ is sufficiently small, then the solutions ${h}$ and ${h}_1$ of (\ref{etabar}) in the $L^2$ and the $W^{1,2}$ sense respectively satisfy
 \begin{equation}\label{h-h}
 \vert{h}-{h}_1\vert\leq\frac{2^\frac{1}{2}}{\|\bar{u}^\prime\|}{p}^\frac{1}{2}.
 \end{equation}
 Moreover it results
 \begin{equation}\label{w12-diffe}
 \|\mathrm{u}-\bar{u}(\cdot-{h})\|_1\leq22^\frac{1}{2}\frac{\|\bar{u}^\prime\|_1}{\|\bar{u}^\prime\|} (p)^\frac{1}{2}=\bar{C}{p}^\frac{1}{2}.
 \end{equation}
 \end{lemma}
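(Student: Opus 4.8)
The plan is to show that $h_1$ lies inside the small neighbourhood of $h$ on which the quadratic lower bound (\ref{q-qeta}) from the proof of Lemma \ref{lemmaw} is available, and then to read off both estimates by elementary manipulations.

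First, for $q^0$ small Remark \ref{remark} guarantees that the element $\bar u\in\{\bar u_1,\dots,\bar u_N\}$ selected by (\ref{etabar}) is the same in the $L^2$ and the $W^{1,2}$ sense, so we may speak of the common $\bar u$, with $h$ the $L^2$-minimiser and $h_1$ the $W^{1,2}$-minimiser. Since $\|\cdot\|\le\|\cdot\|_1$ and $h_1$ realises the $W^{1,2}$-minimum, hypothesis (\ref{q01}) gives $\|\mathrm{u}-\bar u(\cdot-h_1)\|\le\|\mathrm{u}-\bar u(\cdot-h_1)\|_1\le p\le q^0$, and since $h$ realises the $L^2$-minimum, $q:=\|\mathrm{u}-\bar u(\cdot-h)\|\le\|\mathrm{u}-\bar u(\cdot-h_1)\|\le p$. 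The case analysis in the proof of Lemma \ref{lemmaw} shows that $\|\mathrm{u}-\bar u(\cdot-r)\|>q^0$ whenever $|r-h|\ge\frac{\|\bar u^\prime\|}{4\|\bar u^{\prime\prime}\|}$; applying this with $r=h_1$ and using $\|\mathrm{u}-\bar u(\cdot-h_1)\|\le q^0$ forces $|h_1-h|<\frac{\|\bar u^\prime\|}{4\|\bar u^{\prime\prime}\|}$.

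With this localisation in hand, (\ref{q-qeta}) applies with $r=h_1$ (legitimate because $q\le q^0$ and $|h_1-h|$ is in the admissible range), so
\[p^2\ge\|\mathrm{u}-\bar u(\cdot-h_1)\|^2\ge q^2+\tfrac12\|\bar u^\prime\|^2|h_1-h|^2\ge\tfrac12\|\bar u^\prime\|^2|h_1-h|^2,\]
which yields $|h_1-h|\le\frac{2^{1/2}}{\|\bar u^\prime\|}p\le\frac{2^{1/2}}{\|\bar u^\prime\|}p^{1/2}$ once $q^0\le1$, i.e. (\ref{h-h}). For (\ref{w12-diffe}) I would use the triangle inequality together with the fact that translation is an isometry of $W^{1,2}(\R;\R^m)$ and the identity $\bar u(\cdot-h_1)-\bar u(\cdot-h)=\int_h^{h_1}\bar u^\prime(\cdot-s)\,ds$:
\[\|\mathrm{u}-\bar u(\cdot-h)\|_1\le\|\mathrm{u}-\bar u(\cdot-h_1)\|_1+\|\bar u(\cdot-h_1)-\bar u(\cdot-h)\|_1\le p+\|\bar u^\prime\|_1|h_1-h|.\]
Inserting (\ref{h-h}) and using $p\le p^{1/2}\le\frac{\|\bar u^\prime\|_1}{\|\bar u^\prime\|}p^{1/2}$ (valid for $q^0\le1$ since $\|\bar u^\prime\|_1\ge\|\bar u^\prime\|$) gives $\|\mathrm{u}-\bar u(\cdot-h)\|_1\le(1+2^{1/2})\frac{\|\bar u^\prime\|_1}{\|\bar u^\prime\|}p^{1/2}\le2\cdot2^{1/2}\,\frac{\|\bar u^\prime\|_1}{\|\bar u^\prime\|}p^{1/2}$, which is (\ref{w12-diffe}).

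The only delicate point is the localisation of $h_1$ near $h$ in the second paragraph; everything afterwards is the triangle inequality and the already-established quadratic estimate. That localisation is not an additional difficulty, however: it is exactly the global part of the proof of Lemma \ref{lemmaw}, applied to the competitor $h_1$ rather than to an arbitrary translation parameter $r$.
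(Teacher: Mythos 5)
Your proof is correct and follows essentially the same route as the paper: both argue from the chain $p^2\geq\|\mathrm{u}-\bar u(\cdot-h_1)\|_1^2\geq\|\mathrm{u}-\bar u(\cdot-h_1)\|^2\geq q^2+\tfrac12\|\bar u'\|^2|h-h_1|^2$ based on (\ref{q-qeta}), then conclude (\ref{w12-diffe}) by the triangle inequality and translation invariance of the $W^{1,2}$ norm. The one place you add something is the explicit localization argument showing that $|h_1-h|<\frac{\|\bar u'\|}{4\|\bar u''\|}$, so that (\ref{q-qeta}) legitimately applies with $r=h_1$; the paper applies (\ref{q-qeta}) directly, leaving that localization implicit, but it is indeed needed and your justification (that otherwise $\|\mathrm{u}-\bar u(\cdot-h_1)\|>q^0$ by the uniqueness analysis in Lemma \ref{lemmaw}) is exactly right. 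The final constant bookkeeping for (\ref{w12-diffe}) differs cosmetically (you use $q^0\le1$ together with $\|\bar u'\|_1\ge\|\bar u'\|$, while the paper states the equivalent condition $q^0\le2(\|\bar u'\|_1/\|\bar u'\|)^2$), but both yield the same bound.
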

 \begin{proof}
 From (\ref{q-qeta}) we have
 \[\begin{split}
 &(q^0)^2\geq p^2=\|\mathrm{u}-\bar{u}(\cdot-{h}_1)\|_1^2\geq\|\mathrm{u}-\bar{u}(\cdot-{h}_1)\|^2\\
 &\geq\|\mathrm{u}-\bar{u}(\cdot-{h})\|^2+\frac{1}{2}\|\bar{u}^\prime\|^2\vert{h}-{h}_1\vert^2
 \end{split}\]
 and (\ref{h-h}) is established.
 From (\ref{h-h}) it follows
 \[\|\bar{u}(\cdot-{h})-\bar{u}(\cdot-{h}_1)\|_1\leq\|\bar{u}^\prime\|_1\vert{h}-{h}_1\vert
 \leq2^\frac{1}{2}\frac{\|\bar{u}^\prime\|_1}{\|\bar{u}^\prime\|}p^\frac{1}{2}\]
 and therefore
 \[\begin{split}
 &\|\mathrm{u}-\bar{u}(\cdot-{h})\|_1\leq\|\mathrm{u}-\bar{u}(\cdot-{h}_1)\|_1
 +\|\bar{u}(\cdot-{h})-\bar{u}(\cdot-{h}_1)\|_1\\
 &\leq p+2^\frac{1}{2}\frac{\|\bar{u}^\prime\|_1}{\|\bar{u}^\prime\|}p^\frac{1}{2}
 \leq22^\frac{1}{2}\frac{\|\bar{u}^\prime\|_1}{\|\bar{u}^\prime\|}p^\frac{1}{2},
 \quad\text{ for }\;\;p\leq q^0\leq2(\frac{\|\bar{u}^\prime\|_1}{\|\bar{u}^\prime\|})^2.
 \end{split}\]
 The proof is complete.
 \end{proof}

 \begin{lemma}\label{hprime-hpprime?}
 Let $I\subset\R$ be an open interval and let $u$ be a minimizer that satisfies
 \begin{equation}\label{exp-der-decay?}
\vert (D^\alpha(u-\bar{u}_1))(x,y)\vert\leq Ke^{-k\vert y\vert},\;\; (x,y)\in I\times\R,\;\alpha\in\N^2,\;\alpha_1+\alpha_2\leq 2,
\end{equation}
for some $k, K>0$.
Assume that
\[q(x)=\min_j\min_{r\in\R}\|u(x,\cdot)-\bar{u}_j(\cdot-{r})\|\leq q^0,\quad x\in I,\]
with $q^0>0$ sufficiently small.
\vskip.2cm

 Then, for each $x\in I$, there are unique $h(x)\in\R$ and $\bar{u}\in\{\bar{u}_1,\ldots,\bar{u}_N\}$ that satisfy
 \begin{equation}\label{qx}
 q(x)=\|u(x,\cdot)-\bar{u}(\cdot-h(x))\|,
 \end{equation}
 and $\bar{u}$ is the same for all $x\in I$.

 \noindent
  Moreover, for each $x\in I$, $u(x,\cdot)$ can be uniquely decomposed in the form
 \begin{equation}\label{decomp-i1}
 \begin{split}
 &u(x,\cdot)=\bar{u}(\cdot-h(x))+{v}(x,\cdot-h(x))\\
 &{v}(x,y):= u(x,y+h(x))-\bar{u}(y),\\
 &\langle{v}(x,\cdot),\bar{u}^\prime\rangle=0,
 \end{split}
 \end{equation}
 and, if $q(x)>0$, it results $v(x,\cdot)=q(x)\nu(x,\cdot)$
 with $\|\nu(x,\cdot)\|=1$.

 The function $h:I\rightarrow\R$ is of class $C^2$ and
 \[\vert h\vert, \vert h^\prime\vert\leq C,\]
 for some constant $C>0$.

 Finally there exists $k_1, K_1>0$ such that
 \begin{equation}\label{exp-der-decay1?}
\vert (D_y^i{v})(x,y)\vert\leq K_1e^{-k_1\vert y\vert},\quad\text{ for }\;\; (x,y)\in I\times\R,\;i=0,1,2
\end{equation}
and
\begin{equation}\label{l2-der-norm1?}
\|(D_y^i{v})(x,\cdot)\|\leq\frac{K_1}{\sqrt{k_1}},\quad\text{ for }\;\; x\in I,\;i=0,1,2.
\end{equation}
 \end{lemma}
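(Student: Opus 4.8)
\emph{Proof idea.} The plan is to obtain the whole statement by applying Lemma~\ref{lemmaw} slice by slice in $x$, and then to use the connectedness of $I$ and the decay hypothesis (\ref{exp-der-decay?}). First, for each fixed $x\in I$ the bound (\ref{exp-der-decay?}) with $\alpha=0$ gives $\|u(x,\cdot)-\bar{u}_1\|\le K/\sqrt{k}$, so $u(x,\cdot)\in\bar{u}_1+L^2(\R;\R^m)$; this space is the same if $\bar{u}_1$ is replaced by any $\bar{u}_j$, since all of them tend to $a_\pm$. As $q(x)\le q^0<q^*$ with $q^*$ as in (\ref{q*}), Lemma~\ref{lemmaw} applies to the slice $u(x,\cdot)$ and produces unique $h(x)\in\R$ and a unique element $\bar{u}^{x}\in\{\bar{u}_1,\dots,\bar{u}_N\}$ realising (\ref{qx}), together with the orthogonality (\ref{orthogonal}). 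Putting $v(x,y):=u(x,y+h(x))-\bar{u}^{x}(y)$ and using translation invariance of $\|\cdot\|$ yields the decomposition (\ref{decomp-i1}), with $\|v(x,\cdot)\|=q(x)$, and $v(x,\cdot)=q(x)\nu(x,\cdot)$, $\|\nu(x,\cdot)\|=1$, whenever $q(x)>0$.

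Next I would show that $\bar{u}^{x}$ does not depend on $x$. From (\ref{exp-der-decay?}) with $\alpha=(1,0)$ and Cauchy--Schwarz, $\|u(x_1,\cdot)-u(x_2,\cdot)\|^2\le|x_1-x_2|\int_{x_2}^{x_1}\!\int_\R|u_x|^2\,dy\,dt\le(K^2/k)|x_1-x_2|^2$, so $x\mapsto u(x,\cdot)$ is Lipschitz into $L^2(\R;\R^m)$. If $\bar{u}^{x_0}=\bar{u}_{j_0}$, then for $j\ne j_0$, (\ref{q*}) and translation invariance give $\min_r\|u(x_0,\cdot)-\bar{u}_j(\cdot-r)\|\ge q^*-q(x_0)\ge q^*-q^0>q^0\ge q(x)$ once $q^0<q^*/2$; by the Lipschitz bound this strict inequality persists on a neighbourhood of $x_0$, so the selected index is locally constant, hence constant on the interval $I$. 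Write $\bar{u}$ for this common element.

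For the regularity and bounds of $h$: from $u\in C^2$ and (\ref{exp-der-decay?}), via dominated convergence, $x\mapsto u(x,\cdot)$ is $C^2$ as a map into $L^2(\R;\R^m)$, with derivatives $x\mapsto u_x(x,\cdot)$ and $x\mapsto u_{xx}(x,\cdot)$; composing with the $C^2$ functional $h=h(\mathrm{u})$ of Lemma~\ref{lemmaw} gives $h\in C^2(I)$. Differentiating and using (\ref{derivative}),
\[
h'(x)=-\frac{\langle u_x(x,\cdot),\bar{u}'(\cdot-h(x))\rangle}{\|\bar{u}'\|^2-\langle u(x,\cdot)-\bar{u}(\cdot-h(x)),\bar{u}''(\cdot-h(x))\rangle};
\]
the denominator is $\ge\tfrac12\|\bar{u}'\|^2$ by the computation yielding (\ref{mi-serve}) (legitimate since $q(x)\le q^0$ is small) and the numerator is $\le(K/\sqrt{k})\|\bar{u}'\|$, so $|h'|\le C$. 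For $|h|$ itself, (\ref{exp-der-decay?}) gives $\|u(x,\cdot)-\bar{u}\|\le\|u(x,\cdot)-\bar{u}_1\|+\|\bar{u}_1-\bar{u}\|\le C'$ uniformly in $x$, and then (\ref{compact}) applied to $u(x,\cdot)$ (with $a=\tfrac12(a_+-a_-)$) forces $|a|\sqrt{|h(x)|}\le q(x)+\|u(x,\cdot)-\bar{u}\|\le q^0+C'$ whenever $|h(x)|$ is large, i.e. $|h|\le C$ on $I$.

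Finally, the estimates (\ref{exp-der-decay1?})--(\ref{l2-der-norm1?}) follow from $D_y^i v(x,y)=D_y^i(u-\bar{u})(x,y+h(x))$: by (\ref{exp-der-decay?}) and (\ref{n-bound}) one has $|D_y^i(u-\bar{u})(x,y)|\le Ke^{-k|y|}$ for $i\le2$, so using $|y+h(x)|\ge|y|-|h(x)|$ and $|h|\le C$ gives $|D_y^i v(x,y)|\le Ke^{kC}e^{-k|y|}=:K_1e^{-k_1|y|}$, and integration in $y$ gives $\|D_y^i v(x,\cdot)\|\le K_1/\sqrt{k_1}$. The only steps that go beyond transcribing Lemma~\ref{lemmaw} to each slice are the $x$-independence of $\bar{u}$ (connectedness of $I$ together with the $L^2$-Lipschitz continuity of the slices) and the uniform bound $|h|\le C$, valid even when $I$ is unbounded: it is precisely there that the exponential \emph{decay} of $u-\bar{u}_1$, and not merely its boundedness, is genuinely used.
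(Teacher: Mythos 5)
Your argument follows the same route as the paper: apply Lemma~\ref{lemmaw} slicewise, bound $h$ and $h'$ (via the formula~(\ref{derivative})), and propagate the exponential decay using the $|h|$ bound together with~(\ref{n-bound}). You are more explicit than the paper's terse proof on the $x$-independence of $\bar{u}$ (using Lipschitz continuity of $x\mapsto u(x,\cdot)$ in $L^2$ plus the gap $q^*$ from~(\ref{q*})) and on why $|h|\le C$ (via~(\ref{compact})), but these are exactly the steps the paper leaves implicit, so the approach is the same.
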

\begin{proof}
Existence and uniqueness of $\bar{u}$ and of the map $h:I\rightarrow\R$ that satisfies (\ref{qx}) and (\ref{decomp-i1}) follow from Lemma \ref{lemmaw}. From (\ref{exp-der-decay?}) it follows that $h$ is bounded. To show that the same is true for $h^\prime$ we invoke  3. in Lemma \ref{lemmaw} which implies
\[h^\prime(x)=-\frac{\langle u_x(x,\cdot),\bar{u}^\prime(\cdot-h(x))\rangle}{\|\bar{u}^\prime\|^2-\langle {u(x,\cdot)-\bar{u}(\cdot-h(x)),\bar{u}^{\prime\prime}(\cdot-h(x))\rangle}},\quad x\in I\]
and therefore, for $q^0\leq\frac{\|\bar{u}^\prime\|^2}{2\|\bar{u}^{\prime\prime}\|}$, we have
\[\vert h^\prime(x)\vert\leq\frac{\|u_x(x,\cdot)\|\|\bar{u}^\prime\|}{\|\bar{u}^\prime\|^2-q(x)\|\bar{u}^{\prime\prime}\|}
\leq 2\frac{K_1}{\sqrt{k_1}\|\bar{u}^\prime\|},\quad x\in I\]
where we have used (\ref{exp-der-decay?}) and $(u-\bar{u}_1)_x=u_x$ that imply
\begin{equation}\label{l2-der-norm?}
\|u_x(x,\cdot)\|\leq\frac{K_1}{\sqrt{k_1}},\quad\text{ for }\;\; x\in I.
\end{equation}
The estimate (\ref{exp-der-decay1?}) follows from (\ref{exp-der-decay?}), from the bound on $h$ and (\ref{n-bound}). The estimate (\ref{l2-der-norm1?}) from (\ref{exp-der-decay1?}). The proof is complete.
\end{proof}

\subsection{The effective potential.}\label{section22}
\begin{lemma}\label{L2mpliesLinfty}
Let $v\in L^2(\R;\R^m)$ be a $C^1$ map such that
\[\vert v\vert+\vert v^\prime\vert\leq C,\]
for some $C>0$.
Then
\begin{equation}\label{L2mpliesLinfty1}
\|v\|_{L^\infty(\R;\R^m)}\leq(\frac{3}{2}C)^\frac{1}{3}\|v\|^\frac{2}{3}.
\end{equation}
\end{lemma}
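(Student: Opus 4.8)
The plan is to reduce the estimate to a pointwise bound and then pass to the supremum. Fix an arbitrary $s_0\in\R$ and put $a:=|v(s_0)|$. First I would use the hypothesis $|v'|\le C$: by the fundamental theorem of calculus $|v(s)-v(s_0)|\le C|s-s_0|$ for every $s\in\R$, hence $|v(s)|\ge a-C|s-s_0|$, and in particular $|v(s)|\ge a-C|s-s_0|\ge 0$ on the interval $I_0:=[s_0-a/C,\,s_0+a/C]$.

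The second step is to bound $\|v\|$ from below by integrating $|v|^2$ over $I_0$ only:
\[
\|v\|^2\;\ge\;\int_{I_0}|v(s)|^2\,ds\;\ge\;\int_{I_0}\bigl(a-C|s-s_0|\bigr)^2\,ds
\;=\;2\int_0^{a/C}(a-Ct)^2\,dt\;=\;\frac{2a^3}{3C}.
\]
Rearranging gives $a^3\le\tfrac32 C\,\|v\|^2$, i.e.\ $|v(s_0)|\le(\tfrac32 C)^{1/3}\|v\|^{2/3}$. Since $v$ is continuous and $s_0$ is arbitrary, taking the supremum over $s_0\in\R$ yields $\|v\|_{L^\infty(\R;\R^m)}\le(\tfrac32 C)^{1/3}\|v\|^{2/3}$, which is (\ref{L2mpliesLinfty1}).

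I do not expect any genuine obstacle here: the bound $|v|\le C$ ensures $\|v\|_{L^\infty}<\infty$, so the final supremum step is legitimate, and the only computation is the elementary one-variable integral above (substitute $t\mapsto a-Ct$). It is worth noting that the argument uses neither $v\in L^2$ beyond finiteness of $\|v\|$ nor any decay of $v$ at infinity — the latter does hold, since $v\in L^2$ is uniformly continuous, but it is not needed.
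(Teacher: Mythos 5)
Your proof is correct and follows the same line as the paper's: bound $|v|$ from below by a triangular profile using $|v'|\le C$, integrate that profile to obtain a lower bound on $\|v\|^2$, and rearrange. The only cosmetic difference is that you work with an arbitrary point $s_0$ and take the supremum at the end, whereas the paper picks a point $s_m$ where $|v|$ attains its supremum (which exists since $v\in L^2$ with bounded derivative forces $v\to 0$ at infinity); your version sidesteps that attainment argument.
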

\begin{proof}
The assumptions on $v$ imply the existence of $s_m\in\R$ such that
\[\vert v(s_m)\vert=m:=\|v\|_{L^\infty(\R;\R^m)}.\]
This and $\vert v^\prime\vert\leq C$ imply
\[\vert v(s)\vert\geq m-C\vert s-s_m\vert\;\;s\in(s_m-\frac{m}{C},s_m+\frac{m}{C}).\]
This and a routine computation complete the proof.
\end{proof}
For later reference we note that if $v\in L^2(\R;\R^m)$ satisfies the assumptions in Lemma \ref{L2mpliesLinfty} we have, for $\|v\|\leq\frac{2^\frac{1}{2}}{3^\frac{1}{2}}C$,
\begin{equation}\label{NonlTermEst}
\begin{split}
&\vert\int_\R\Big(W(\bar{u}+v)-W(\bar{u})-W_u(\bar{u})\cdot v-\frac{1}{2}W_{uu}(\bar{u})v\cdot v\Big){d} s\vert
\leq C_w\|v\|^\frac{8}{3},\\
&\vert\int_\R\Big(W_u(\bar{u}+v)\cdot v-W_u(\bar{u})\cdot v-W_{uu}(\bar{u})v\cdot v\Big){d} s\vert
\leq C_w\|v\|^\frac{8}{3},\\
&\vert\int_\R\Big(W_{uu}(\bar{u}+v)-W_{uu}(\bar{u})\Big)v\cdot v{d} s\vert
\leq C_w\|v\|^\frac{8}{3}.
\end{split}
\end{equation}
for some constant $C_w>0$.
Since $W$ is a $C^3$ function and $\vert v\vert\leq C$
\[\max_{\vert z\vert\leq C}\vert W_{uuu}(\bar{u}+z)\vert\leq\text{Const}.\]
This and Taylor's formula imply that, for each $s\in\R$, it results
\[\vert W(\bar{u}+v)-W(\bar{u})-W_u(\bar{u})\cdot v-\frac{1}{2}W_{uu}(\bar{u})v\cdot v\vert\leq C\vert v\vert^3
\leq C\|v\|^\frac{2}{3}\vert v\vert^2,\]
where, for the last inequality, we have used (\ref{L2mpliesLinfty1}).
Then (\ref{NonlTermEst})$_1$ follows by integrating on $\R$. The other inequalities are proved in the same way.

 If $v\in W^{1,2}(\R;\R^m)$, $v\neq 0$ we sometime write $v$ in the form
\[v(s)=q\nu(s),\;\;s\in\R,\]
where $q=\|v\|$ is the $L^2$ norm of $v$, and $\nu\in\{w\in W^{1,2}(\R,\R^m):\|w\|=1\}$.

\noindent
For  $\bar{u}\in\{\bar{u}_1,\ldots,\bar{u}_N\}$ fixed, the effective potential defined in (\ref{eff-def}) can be considered a function of $q\in\R$ and $\nu\in W^{1,2}(\R,\R^m):\|w\|=1\}$. To emphasize this point of view, sometime, we write $\mathcal{W}(q,\nu)$ instead of $\mathcal{W}(q\nu)$.
Recall that $\|v\|_1$ denotes the $W^{1,2}(\R,\R^m)$ norm of $v$.
\begin{lemma}\label{lemmaw-true}
Let $v\in W^{1,2}(\R;\R^m)$ be as in Lemma \ref{L2mpliesLinfty} and assume that
\[\langle v,\bar{u}^\prime\rangle=0.\]
 Then the constant $q^0$ in Lemma \ref{lemmaw} can be chosen so that the effective potential
  $\mathcal{W}(q,\nu)$ is increasing in $q$ for $q\in[0,q^0]$ and there is $\mu>0$ such that
\begin{equation}\label{W2geqq2}
\frac{\partial^2}{\partial q^2}\mathcal{W}(q,\nu)\geq\mu(1+\|\nu^\prime\|^2),\quad q\in(0,q^0],
\end{equation}
and
\begin{equation}\label{W2geqq2-i}
\begin{split}
&\mathcal{W}(q,\nu)\geq\frac{1}{2}\mu q^2(1+\|\nu^\prime\|^2),\quad q\in(0,q^0]\\
&\Leftrightarrow\\
&\mathcal{W}(v)\geq\frac{1}{2}\mu\|v\|_1^2,\quad\|v\|\in(0,q^0].
\end{split}
\end{equation}
Moreover it results
\begin{equation}\label{wef-tvv}
\vert\mathcal{W}(v)-\frac{1}{2}\langle Tv,v\rangle\vert\leq C\|v\|^\frac{8}{3},\;\;\|v\|\in(0,q^0],
\end{equation}
where $T$ is defined in (\ref{operator}).
\end{lemma}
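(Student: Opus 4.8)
The plan is to expand $\mathcal{W}$ to second order about $v=0$, to recognize the quadratic part as $\tfrac12\langle Tv,v\rangle$, to prove a spectral gap for $T$ on the $L^2$--orthogonal complement of $\bar u'$ (which is where we are working, by the hypothesis $\langle v,\bar u'\rangle=0$), and then to absorb the cubic remainders by shrinking $q^0$. For the expansion, since $\bar u'$ decays exponentially by (\ref{n-bound}) and $v\in W^{1,2}(\R;\R^m)$ vanishes at infinity, an integration by parts and $\bar u''=W_u(\bar u)$ give
\[
\mathcal{W}(v)=\int_\R\Big(\tfrac12|v'|^2+W(\bar u+v)-W(\bar u)-W_u(\bar u)\cdot v\Big)ds .
\]
Writing $W(\bar u+v)-W(\bar u)-W_u(\bar u)\cdot v=\tfrac12 W_{uu}(\bar u)v\cdot v+R(v)$ and recalling $\langle Tv,v\rangle=\int_\R(|v'|^2+W_{uu}(\bar u)v\cdot v)ds$, this is $\mathcal{W}(v)=\tfrac12\langle Tv,v\rangle+\int_\R R(v)\,ds$, and $|\int_\R R(v)\,ds|\le C_w\|v\|^{8/3}$ is exactly (\ref{NonlTermEst})$_1$, proving (\ref{wef-tvv}). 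The same identity at $v=0$ gives $\mathcal{W}(0,\nu)=0$, and since $\bar u$ is a critical point of $J_\R$, $\partial_q\mathcal{W}(0,\nu)=\langle-\bar u''+W_u(\bar u),\nu\rangle=0$; these two facts will be needed in the last step.

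The main point is the spectral gap for $T$. Because $\bar u$ is a \emph{minimizer} of $J_\R$ over $\mathcal A$ and $\bar u+tw\in\mathcal A$ for every $w\in C^\infty_c(\R;\R^m)$, the second variation $\langle Tw,w\rangle=\frac{d^2}{dt^2}J_\R(\bar u+tw)|_{t=0}$ is nonnegative, so by density $T\ge0$ as a quadratic form on $W^{1,2}$. Since $W_{uu}(\bar u(s))\to W_{uu}(a_\pm)$ exponentially as $s\to\pm\infty$ by (\ref{n-bound}) and $a_\pm$ are nondegenerate, the essential spectrum of $T$ is $[\Lambda,\infty)$ with $\Lambda:=\min\{\text{smallest eigenvalue of }W_{uu}(a_-),\ \text{smallest eigenvalue of }W_{uu}(a_+)\}>0$. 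Hence below $\Lambda$ the spectrum of $T$ is discrete; $0$ is an eigenvalue with eigenfunction $\bar u'$, which is \emph{simple} by the nondegeneracy hypothesis on the $N$ connections, so $\lambda_1:=\inf(\sigma(T)\setminus\{0\})>0$ and $\langle Tv,v\rangle\ge\lambda_1\|v\|^2$ whenever $\langle v,\bar u'\rangle=0$. To upgrade to the full norm, use $\|v'\|^2=\langle Tv,v\rangle-\int_\R W_{uu}(\bar u)v\cdot v\,ds\le\langle Tv,v\rangle+b\|v\|^2$ with $b:=\|W_{uu}(\bar u)\|_{L^\infty}$; a convex combination of the two inequalities yields a constant $c_1>0$ with $\langle Tv,v\rangle\ge c_1\|v\|_1^2$ for $\langle v,\bar u'\rangle=0$.

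Next compute the second derivative: differentiating $\mathcal{W}(q\nu)$ twice in $q$ (justified by dominated convergence, using $W\in C^3$, $\nu\in W^{1,2}\hookrightarrow L^\infty$, and the exponential decay of $\bar u''$) gives, for $\|\nu\|=1$,
\[
\frac{\partial^2}{\partial q^2}\mathcal{W}(q,\nu)=\|\nu'\|^2+\int_\R W_{uu}(\bar u+q\nu)\nu\cdot\nu\,ds=\langle T\nu,\nu\rangle+\int_\R\big(W_{uu}(\bar u+q\nu)-W_{uu}(\bar u)\big)\nu\cdot\nu\,ds .
\]
Applying (\ref{NonlTermEst})$_3$ with $q\nu$ in place of $v$ bounds the last integral in absolute value by $C_w q^{2/3}$, and $\langle v,\bar u'\rangle=0$ forces $\langle\nu,\bar u'\rangle=0$, so by the previous step $\frac{\partial^2}{\partial q^2}\mathcal{W}(q,\nu)\ge c_1(1+\|\nu'\|^2)-C_w q^{2/3}$. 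Choosing $q^0$ small enough that $C_w(q^0)^{2/3}\le\tfrac12 c_1$ (and also $q^0\le 2^{1/2}3^{-1/2}C$ so (\ref{NonlTermEst}) applies, and no larger than the $q^0$ of Lemma \ref{lemmaw}) yields (\ref{W2geqq2}) with $\mu:=\tfrac12 c_1$, uniformly in $\nu$. Then, using $\mathcal{W}(0,\nu)=0$ and $\partial_q\mathcal{W}(0,\nu)=0$, integrating (\ref{W2geqq2}) once gives $\partial_q\mathcal{W}(q,\nu)\ge\mu q>0$ on $(0,q^0]$, so $q\mapsto\mathcal{W}(q,\nu)$ is increasing on $[0,q^0]$; integrating again, $\mathcal{W}(q,\nu)=\int_0^q(q-s)\frac{\partial^2}{\partial q^2}\mathcal{W}(s,\nu)\,ds\ge\tfrac12\mu q^2(1+\|\nu'\|^2)$, which is the first line of (\ref{W2geqq2-i}); the equivalence with $\mathcal{W}(v)\ge\tfrac12\mu\|v\|_1^2$ is immediate since $v=q\nu$ gives $\|v\|_1^2=q^2(1+\|\nu'\|^2)$ and $\mathcal{W}(v)=\mathcal{W}(q,\nu)$.

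The only nontrivial ingredient is the spectral gap: nonnegativity of $T$ is a soft consequence of minimality, but the \emph{strict} gap $\lambda_1>0$ needs both the nondegeneracy of $a_\pm$ (to keep the essential spectrum away from $0$) and the nondegeneracy hypothesis on the connections (to make $0$ a simple, hence isolated, eigenvalue). Everything else is bookkeeping of the cubic remainders through (\ref{NonlTermEst}), so I expect no further obstacle.
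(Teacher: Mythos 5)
Your proof is correct, and it reaches the same endpoint as the paper's, but via a slightly different route in the one genuinely technical step: how to upgrade the spectral gap $\langle T\nu,\nu\rangle\ge\mu_1\|\nu\|^2$ on $\{\bar u'\}^\perp$ to a bound involving the full $W^{1,2}$ norm. The paper first derives $\frac{\partial^2}{\partial q^2}\mathcal{W}(q,\nu)\ge\frac{\mu_2}{2}$ and then applies the Carr--Pego device: it writes $\mu(1+\|\nu'\|^2)=\mu\|\nu'\|^2+\mu\int I\,\nu\cdot\nu$ and decomposes $\partial_q^2\mathcal{W}(q,\nu)-\mu(1+\|\nu'\|^2)$ as $(1-\mu)\partial_q^2\mathcal{W}(q,\nu)+\mu\int(W_{uu}(\bar u+q\nu)-I)\nu\cdot\nu\,ds$, tuning $\mu$ against $\|W_{uu}(\bar u)-I\|_{L^\infty}$ and the $O(q^{2/3})$ error. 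You instead prove $\langle T\nu,\nu\rangle\ge c_1\|\nu\|_1^2$ directly, by observing that $\|\nu'\|^2=\langle T\nu,\nu\rangle-\int W_{uu}(\bar u)\nu\cdot\nu\le\langle T\nu,\nu\rangle+b\|\nu\|^2$ with $b=\|W_{uu}(\bar u)\|_{L^\infty}$, and combining this with the $L^2$ gap to get $\|\nu\|_1^2\le(1+(1+b)/\lambda_1)\langle T\nu,\nu\rangle$; then the $O(q^{2/3})$ correction from (\ref{NonlTermEst})$_3$ is absorbed by shrinking $q^0$. Your variant separates the linear (spectral) estimate cleanly from the nonlinear perturbation, whereas the paper's trick operates on the already-perturbed second derivative; both give the same constant-quality conclusion and both require the same ingredients (nondegeneracy of $a_\pm$ for the essential spectrum gap, simplicity of the $0$ eigenvalue, and (\ref{NonlTermEst})). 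The rest of your argument — the expansion giving (\ref{wef-tvv}), the vanishing of $\mathcal{W}$ and $\partial_q\mathcal{W}$ at $q=0$, and the two integrations in $q$ to get monotonicity and the quadratic lower bound (\ref{W2geqq2-i}) — matches the paper's proof.
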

\begin{proof}
We have
\begin{equation}\label{j-jbar}
\begin{split}
&\mathcal{W}(q,\nu)=J_\R(\bar{u}+q\nu)-J_\R(\bar{u})\\
&=\langle\bar{u}^\prime,q\nu^\prime\rangle+\frac{1}{2}q^2\|\nu^\prime\|^2+\int_\R\Big(W(\bar{u}+q\nu)-W(\bar{u})\Big){d} s.
\end{split}
\end{equation}

\noindent
By differentiating twice $\mathcal{W}(q,\nu)$ with respect to $q$ yields
\begin{equation}\label{44}
\begin{split}
&\frac{\partial^2}{\partial q^2}\mathcal{W}(q,\nu)=\|\nu^\prime\|^2+\int_\R W_{uu}(\bar{u}+q\nu)\nu\cdot\nu{d} s\\
&=\frac{\partial^2}{\partial q^2}\mathcal{W}(0,\nu)+\int_\R\Big(W_{uu}(\bar{u}+q\nu)-W_{uu}(\bar{u})\Big)\nu\cdot\nu{d} s.
\end{split}
\end{equation}
From $q\nu=v$, and (\ref{NonlTermEst}) we have
\begin{equation}\label{wu-wu}
\vert\int_\R\Big(W_{uu}(\bar{u}+q\nu)-W_{uu}(\bar{u})\Big)\nu\cdot\nu{d} s\vert\leq C q^\frac{2}{3}.
\end{equation}

\noindent
We now observe that
\begin{eqnarray}\label{d2-tnu}
\frac{\partial^2}{\partial q^2}\mathcal{W}(0,\nu)=\langle T\nu,\nu\rangle.
\end{eqnarray}
$T$ is a self-adjoint operator which is positive by the minimality of $\bar{u}$. From the assumption that $a_\pm$ are non degenerate  the matrix $W_{uu}(a_\pm)$ is positive definite and Theorem A.2 in \cite{he} implies that the essential spectrum of $T$ is bounded below by a positive constant $\mu_e>0$. Since  $\bar{u}^\prime$ is an eigenvector of $T$, the assumption that $0$ is a simple eigenvalue of $T$ implies that if $\mu_1<\mu_e$ is an eigenvalue of $T$ then
 \[\mu_1=\inf_{\langle\nu,\bar{u}^\prime\rangle=0}\langle T\nu,\nu\rangle>0.\] From this, (\ref{d2-tnu}) and Theorem 13.31 in \cite{r} it follows that there is $\mu_2>0$ such that
\begin{eqnarray}
\frac{\partial^2}{\partial q^2}\mathcal{W}(0,\nu)\geq\mu_2>0,
\end{eqnarray}

\noindent
which together with (\ref{wu-wu}) implies
\begin{equation}\label{d2-geq-mu}
\frac{\partial^2}{\partial q^2}\mathcal{W}(q,\nu)\geq\frac{\mu_2}{2},\quad\text{ for }\,q\in[0,q^0]
\end{equation}
provided we assume $q^0\leq(\frac{\mu_2}{2C})^\frac{3}{2}$.

\noindent
To upgrade this estimate to (\ref{W2geqq2})  we use a trick from \cite{cp}. Recalling (\ref{44}) we have
\[\begin{split}
&\frac{\partial^2}{\partial q^2}\mathcal{W}(q,\nu)-\mu(1+\|\nu^\prime\|^2)\\
&=(1-\mu)\Big(\|\nu^\prime\|^2+\int_\R W_{uu}(\bar{u}+q\nu)\nu\cdot\nu{d} s\Big)\\
&+\mu\int_\R\Big(W_{uu}(\bar{u}+q\nu)-I\Big)\nu\cdot\nu{d} s\\
&\geq(1-\mu)\frac{\mu_2}{2}-\mu\Big(\vert\int_\R\Big(W_{uu}(\bar{u}+q\nu)-W_{uu}(\bar{u})\Big)\nu\cdot\nu{d} s\vert\\
&+\vert\int_\R\Big(W_{uu}(\bar{u})-I\Big)\nu\cdot\nu{d} s\vert\Big)\\
&\geq(1-\mu)\frac{\mu_2}{2}-\mu(C q^\frac{2}{3}+C^\prime)\geq(1-\mu)\frac{\mu_2}{2}-2\mu C^\prime=0,\\
&\quad\text{ for }\;\;\mu=\frac{\mu_2}{\mu_2+4C^\prime},\;q\leq q^0\leq(\frac{C^\prime}{C})^\frac{3}{2}.
\end{split}
\]
where we have used (\ref{wu-wu}) and $\vert W_{uu}(\bar{u})-I\vert\leq C^\prime$. This concludes the proof of (\ref{W2geqq2}). The inequality in (\ref{W2geqq2-i}) follows from
 (\ref{W2geqq2}) and
\[\mathcal{W}(0,\nu)=\frac{\partial}{\partial q}\mathcal{W}(0,\nu)=0,\]
which is a consequence of the definition of $\mathcal{W}(q,\nu)$ and of the minimality of $\bar{u}$. To complete the proof we note that (\ref{wef-tvv}) follows from (\ref{NonlTermEst}) and (\ref{j-jbar}) that, after observing  $\langle\bar{u}^\prime,v^\prime\rangle=-\langle\bar{u}^{\prime\prime},v\rangle=-\langle W_u(\bar{u}),v\rangle$, can be rewritten as
\begin{equation}\label{j-jbar-v}
\begin{split}
&\mathcal{W}(v)=\langle\bar{u}^\prime,v^\prime\rangle+\frac{1}{2}\|v^\prime\|^2+\int_\R\Big(W(\bar{u}+q\nu)-W(\bar{u})\Big){d} s\\
&=\frac{1}{2}\langle Tv,v\rangle+\int_\R\Big(W(\bar{u}+q\nu)-W(\bar{u})-W_u(\bar{u})v-\frac{1}{2}W_{uu}(\bar{u})v\cdot v\Big){d} s
\end{split}
\end{equation}
 The proof is complete.
\end{proof}
Lemma \ref{lemmaw-true} describes the properties of the effective potential $\mathcal{W}$ in a neighborhood of one of the connections represented by $\bar{u}_1,\ldots,\bar{u}_N$. We also need a lower bound for the effective potential away from a neighborhood of the $N$ connections.

\begin{lemma}\label{away}
Let $q^0$ the constant in Lemma \ref{lemmaw}. For ${p}\in(0,q^0]$ let $\mathcal{U}_{p}$ be the set of $C^{2,\alpha}(\R;\R^m)$ maps that satisfy
\begin{enumerate}
\item\begin{equation}\label{smoothness1}
\|\mathrm{u}\|_{C^{2,\alpha}(\R;\R^m)}\leq C,
\end{equation}
for some constants $C$.
\item\begin{equation}\label{near-a2}
\vert\mathrm{u}(s)-a_{\sigma(s)}\vert\leq g(\vert s\vert),\quad\text{ for }\;\;s\in\R,
\end{equation}
where $g:[0,\infty)\rightarrow(0,\infty)$ is a positive decreasing function that converges to $0$ at infinity.
\item\begin{equation}\label{w12-distance}
\|\mathrm{u}-\bar{u}_i(\cdot-r)\|_1\geq {p},\;\;r\in\R,\;i=1,\ldots,N.
\end{equation}
\end{enumerate}
Then
there exists $e_{p}>0$ independent of $g$ such that
\[J_\R(\mathrm{u})-c_0\geq e_{p},\quad\text{ for }\;\;\mathrm{u}\in\mathcal{U}_{p}.\]
\end{lemma}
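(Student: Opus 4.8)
The plan is to argue by contradiction, exploiting the compactness that the structural hypotheses on $W$ force on a minimizing sequence for $J_\R$ among maps in $\mathcal{U}_p$. Suppose no such $e_p$ exists; then there is a sequence $\mathrm{u}_n\in\mathcal{U}_p$ with $J_\R(\mathrm{u}_n)-c_0\to 0$. By (\ref{smoothness1}) the $\mathrm{u}_n$ are uniformly bounded in $C^{2,\alpha}(\R;\R^m)$, so along a subsequence $\mathrm{u}_n\to\mathrm{u}_\infty$ in $C^2_{\mathrm{loc}}(\R;\R^m)$ for some $\mathrm{u}_\infty\in C^2(\R;\R^m)$. The key point is that the uniform decay estimate (\ref{near-a2}), with a single $g$ independent of $n$, prevents mass escaping to $\pm\infty$: for every $\varepsilon>0$ one can fix $R$ with $g(R)$ small enough that the contribution of $\{|s|\ge R\}$ to $J_\R(\mathrm{u}_n)$ is $\le\varepsilon$ uniformly in $n$ (using that $W$ vanishes quadratically at $a_\pm$ together with the $C^1$ bound to control the gradient term, exactly as in the energy estimates of Lemma \ref{upper-bound}). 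Hence $\mathrm{u}_\infty\in\mathcal{A}$ and, by lower semicontinuity of $J_\R$ together with the uniform tail smallness, $J_\R(\mathrm{u}_\infty)\le\liminf J_\R(\mathrm{u}_n)=c_0$. Since $\mathrm{u}_\infty\in\mathcal{A}$ and $c_0$ is the minimum value (\ref{barmin}), $\mathrm{u}_\infty$ is a minimizer of $J_\R$, so $\mathrm{u}_\infty=\bar{u}_i(\cdot-r)$ for some $i\in\{1,\dots,N\}$ and some $r\in\R$.

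Next I would upgrade the local convergence to $W^{1,2}$ convergence to reach the contradiction with (\ref{w12-distance}). Write $w_n:=\mathrm{u}_n-\mathrm{u}_\infty$. On a fixed window $[-R,R]$ we have $w_n\to 0$ in $C^1$, hence in $W^{1,2}([-R,R];\R^m)$. Outside the window, both $\mathrm{u}_n$ and $\mathrm{u}_\infty=\bar{u}_i(\cdot-r)$ obey the uniform decay (\ref{near-a2}) (for $\bar{u}_i$ via (\ref{n-bound}), once $R$ is taken large relative to the eventual translation parameter, which is itself controlled since $\|\mathrm{u}_\infty-\mathrm{u}_n\|_{C^0([-R,R])}\to 0$ pins $r$ within a bounded range by the quantitative gap estimate (\ref{compact})), so $\|w_n\|_{L^2(\{|s|\ge R\})}\le 2\,(\text{tail of }g)$, which is as small as we like by choosing $R$ large first. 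For the derivative term on the tails one interpolates: $\|w_n'\|_{L^2(\{|s|\ge R\})}^2\le \|w_n\|_{L^\infty}\cdot(\text{bound on }\|w_n''\|_{L^1})$ after integration by parts, and both factors are controlled by (\ref{smoothness1}) and the smallness of $g$ — alternatively one reruns the Cut-Off Lemma argument referenced after (\ref{P}). Combining, $\|\mathrm{u}_n-\bar{u}_i(\cdot-r)\|_1\to 0$, contradicting $\|\mathrm{u}_n-\bar{u}_i(\cdot-r)\|_1\ge p>0$. This forces the existence of $e_p>0$, and the construction shows it depends only on $p$ (through $q^0$) and the fixed data $W,a_\pm,\bar{u}_1,\dots,\bar{u}_N,C$, but \emph{not} on the particular decay profile $g$, since $g$ entered only through the uniform tail smallness that was absorbed into the choice of $R$.

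The main obstacle is the control of the $W^{1,2}$ tails uniformly in $n$ and independently of $g$: the hypothesis (\ref{near-a2}) only gives $C^0$ decay of $\mathrm{u}_n-a_{\sigma(s)}$, so one must convert this into $L^2$ control of the \emph{derivative} $\mathrm{u}_n'$ on $\{|s|\ge R\}$ without losing the $g$-independence of the final constant. I expect the cleanest route is to combine the interior Schauder bound (\ref{smoothness1}) with an ODE/energy argument: on any half-line where $W(\mathrm{u}_n)\le\epsilon$, the bound $\frac12|\mathrm{u}_n'|^2\le W(\mathrm{u}_n)+\tfrac12|\mathrm{u}_n'|^2$ together with near-minimality $\int_\R(\tfrac12|\mathrm{u}_n'|^2+W(\mathrm{u}_n))\le c_0+o(1)$ and the equipartition-type control available for low-energy configurations yields $\int_{\{|s|\ge R\}}|\mathrm{u}_n'|^2\to 0$ as $R\to\infty$ uniformly in $n$. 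Once this uniform tail estimate is in hand the rest is the soft compactness argument above; this is precisely the point where the reference to the Cut-Off Lemma (section 2.2 of \cite{af3}) does the work, so in the write-up I would invoke it rather than reprove it.
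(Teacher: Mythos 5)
Your opening moves --- contradiction, $C^{2}_{\mathrm{loc}}$ compactness from (\ref{smoothness1}), passage to a limit $\mathrm{u}_\infty\in\mathcal{A}$ with $J_\R(\mathrm{u}_\infty)=c_0$ thanks to (\ref{near-a2}), Fatou and lower semicontinuity, and identification $\mathrm{u}_\infty=\bar{u}_i(\cdot-r)$ --- coincide with the paper's. The gap is in the last stage, where you try to upgrade local convergence to $\|\mathrm{u}_n-\bar{u}_i(\cdot-r)\|_1\to 0$. Both tail estimates you invoke fail as stated: (\ref{near-a2}) gives only \emph{pointwise} decay by a $g$ that is merely decreasing and vanishing at infinity, not assumed to lie in $L^2$, so it yields no $L^2$-tail smallness (take $g(s)=1/\log(2+s)$); and the integration-by-parts bound $\|w_n'\|_{L^2(\{|s|\ge R\})}^2\le\|w_n\|_{L^\infty}\|w_n''\|_{L^1}$ requires an $L^1$ bound on $w_n''$ over an unbounded half-line, which the $C^{2,\alpha}$ estimate (\ref{smoothness1}) does not provide. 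The appeal to the Cut-Off Lemma is also not a substitute: that lemma \emph{modifies} maps to improve decay, it does not estimate the $W^{1,2}$ tail of a given sequence.

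The paper sidesteps global $W^{1,2}$-convergence entirely. It fixes a window $I_\tau=(-\tau,\tau)$ so that $J_{I_\tau^\pm}(\bar{u})\le\epsilon$, $\|\bar{u}-a_\pm\|_{1,I_\tau^\pm}\le\epsilon$ and $|\mathrm{u}_j-a_\pm|\le r_0$ on $I_\tau^\pm$, and uses $C^1_{\mathrm{loc}}$ convergence only on $I_\tau$: for $j$ large, $\|\mathrm{u}_j-\bar{u}\|_{1,I_\tau}\le\epsilon$ and $|J_{I_\tau}(\mathrm{u}_j)-J_{I_\tau}(\bar{u})|\le\epsilon$. Then hypothesis (\ref{w12-distance}) is used in the \emph{opposite} direction from yours: it forces the tails to carry $W^{1,2}$-mass, $\|\mathrm{u}_j-a_-\|_{1,I_\tau^-}+\|\mathrm{u}_j-a_+\|_{1,I_\tau^+}\ge p-3\epsilon\ge p/2$. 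The nondegeneracy bound (\ref{second-derW}), $W(a_\pm+z)\ge\tfrac12\gamma^2|z|^2$ for $|z|\le r_0$, converts this tail mass into tail energy, $\|\mathrm{u}_j-a_\pm\|_{1,I_\tau^\pm}^2\le C_\gamma J_{I_\tau^\pm}(\mathrm{u}_j)$, giving $J_\R(\mathrm{u}_j)\ge c_0+p^2/(16C_\gamma)$ and the desired contradiction. So the key observation is not that the tails become small, but that under (\ref{w12-distance}) they \emph{cannot} be small while being energetically cheap. Your closing paragraph does gesture at a correct alternative (compare $J_{I_\tau}(\mathrm{u}_j)\to J_{I_\tau}(\bar{u})$ with $J_\R(\mathrm{u}_j)\to c_0$ to get uniform tail-energy decay, then use (\ref{second-derW}) to control the tail $W^{1,2}$ norm); if you take that route you should replace the $g$-based and integration-by-parts estimates, which are the steps that break.
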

\begin{proof}
Assume instead that
\[\lim_{j\rightarrow+\infty}J_\R(\mathrm{u}_j)= c_0\]
along a sequence $\mathrm{u}_j\in\mathcal{U}_{p}$, $j=1,\ldots$.

Then we can also assume $J_\R(\mathrm{u}_j)\leq 2c_0$. From  this and the fact that (\ref{near-a2}) is a convex condition it follows that, possibly by passing to a subsequence,
\[\mathrm{u}_j\rightharpoonup {\rm{u}}, \text{ in } W^{1,2}_{\mathrm{loc}}(\R;\R^m),\]
for some $\mathrm{u}\in W^{1,2}_{\mathrm{loc}}(\R;\R^m)$ that satisfies (\ref{near-a2})
and
\[\lim_{j\rightarrow+\infty}J(\mathrm{u}_j)=J(\mathrm{u})= c_0.\]
This and the fact that $\bar{u}_1,\ldots,\bar{u}_N$ are, by assumption, (modulo translation) the only minimizers of $J_\R$, imply
\[\mathrm{u}=\bar{u}=\bar{u}_i(\cdot-s_0),\]
for some $i\in\{1,\ldots,N\}$ and $s_0\in\R$.
Since $a_\pm$ are non degenerate zeros of $W\geq 0$ and $W$ is a $C^3$ function, there exist positive constants $\gamma, \Gamma$ and $r_0>0$ such that
\begin{equation}\label{second-derW}
\begin{split}
&W_{uu}(a_\pm+z)\zeta\cdot\zeta\geq\gamma^2\vert\zeta\vert^2,\;\;\zeta\in\R^m,\;\vert z\vert\leq r_0,\\
&\frac{1}{2}\gamma^2\vert z\vert^2\leq W(a_\pm+z)\leq\frac{1}{2}\Gamma^2\vert z\vert^2,\;\;\vert z\vert\leq r_0.
\end{split}
\end{equation}

If $I\subset\R$ is an interval we let $\|v\|_{1,I}$ the $W^{1,2}(I;\R^m)$ norm of $v$. Given $\tau>0$ we let $I_\tau=(-\tau,\tau)$, ${I}_\tau^+=[\tau,+\infty)$ and  ${I}_\tau^-=(-\infty,-\tau]$. From (\ref{n-bound}) we can fix $\tau$ so that

\begin{equation}\label{energy-est}
J_{I_\tau^-}(\bar{u})+J_{I_\tau^+}(\bar{u})\leq\epsilon,
\end{equation}
and
\begin{equation}\label{W12-est}
\|\bar{u}-a_-\|_{1,I_\tau^-}+\|\bar{u}-a_+\|_{1,I_\tau^+}\leq\epsilon
\end{equation}
where $\epsilon>0$ is a small number to be chosen later.
Moreover, since both $\bar{u}$ and $\mathrm{u}_j$ satisfy  (\ref{near-a2}) we can take $\tau>0$ so large that
\begin{equation}\label{into-erre0}
\begin{split}
&\vert\mathrm{u}_j-a_-\vert\leq r_0,\quad\text{ in }\;I_\tau^-,\\
&\vert\mathrm{u}_j-a_+\vert\leq r_0,\quad\text{ in }\;I_\tau^+
\end{split}
\end{equation}
where $r_0$ is the constant in (\ref{second-derW}).
From (\ref{smoothness1}) we can assume that $\mathrm{u}_j$ converges in compact intervals in the $C^1$ sense to the limit map $\bar{u}$  and therefore that, for large $j$
\begin{equation}\label{for-largej}
\begin{split}
&\vert J_{I_\tau}(\mathrm{u}_j)-J_{I_\tau}(\bar{u})\vert\leq\epsilon,\\
&\|\mathrm{u}_j-\bar{u}\|_{1,I_\tau}\leq\epsilon.
\end{split}
\end{equation}

From (\ref{W12-est}), (\ref{for-largej})$_2$ and (\ref{w12-distance}) it follows, for $\epsilon>0$ small
\begin{equation}\label{for-largej1}
\begin{split}
&\|\mathrm{u}_j-a_-\|_{1,{I}_\tau^-}+\|\mathrm{u}_j-a_+\|_{1,{I}_\tau^+}\\
&\geq
\|\mathrm{u}_j-\bar{u}\|_{1,I_\tau^-}+\|\mathrm{u}_j-\bar{u}\|_{1,I_\tau^+}
-\|\bar{u}-a_-\|_{1,I_\tau^-}-\|\bar{u}-a_+\|_{1,I_\tau^+}\\
&\geq\|\mathrm{u}_j-\bar{u}\|_1-\|\mathrm{u}_j-\bar{u}\|_{1,I_\tau}
-\|\bar{u}-a_-\|_{1,I_\tau^-}-\|\bar{u}-a_+\|_{1,I_\tau^+}\\
&\geq p-3\epsilon\geq \frac{p}{2}.
\end{split}
\end{equation}
 From (\ref{for-largej1}), using also (\ref{second-derW}) and (\ref{into-erre0}), we obtain
\[
\begin{split}
&\frac{p^2}{8}\leq\|\mathrm{u}_j-a_-\|_{1,{I}_\tau^-}^2+\|\mathrm{u}_j-a_+\|_{1,{I}_\tau^+}^2\\
&\leq\int_{{I}_\tau^-}(\vert \mathrm{u}_j-a_-\vert^2+\vert \mathrm{u}_j^\prime\vert^2){d} s
+\int_{{I}_\tau^+}(\vert \mathrm{u}_j-a_+\vert^2+\vert \mathrm{u}_j^\prime\vert^2){d} s\\
&\leq C_\gamma\int_{{I}_\tau^-}\Big(\gamma^2\vert \mathrm{u}_j-a_-\vert^2+\frac{1}{2}\vert \mathrm{u}_j^\prime\vert^2\Big){d}\tau+C_\gamma\int_{{I}_\tau^+}\Big(\gamma^2\vert\mathrm{u}_j-a_+\vert^2+\frac{1}{2}\vert \mathrm{u}_j^\prime\vert^2{d}\tau\Big){d}\tau\\
&\leq  C_\gamma(J_{{I}_s^-}(\mathrm{u}_j)+J_{{I}_s^+}(\mathrm{u}_j))
\end{split}
\]
where $C_\gamma=\max\{\frac{1}{\gamma^2},2\}$.
From this (\ref{energy-est}) and (\ref{for-largej}) it follows, for $j$ large and $\epsilon$ small
\begin{equation}\label{contra-lim}
\begin{split}
&J(\mathrm{u}_j)=J_{I_\tau}(\mathrm{u}_j)+J_{{I}_\tau^-}(\mathrm{u}_j)+J_{{I}_\tau^+}(\mathrm{u}_j)\\
&\geq J_{I_\tau}(\bar{u})-\epsilon+\frac{p^2}{8C_\gamma}\\
&\geq c_0-2\epsilon+\frac{p^2}{8C_\gamma}\\
&\geq c_0+\frac{p^2}{16C_\gamma}.
\end{split}
\end{equation}
 The estimate (\ref{contra-lim}) contradicts the minimizing character of the sequence $\mathrm{u}_j$ and concludes the proof.
\end{proof}

\subsection{Hamiltonian identities and a representation formula for the energy}\label{section23}

In this section we derive two identities that are consequences of the minimality of $u$ and are basic for our  proof of Theorem \ref{main} and Theorem \ref{scha-th}. These identities were noted (see Lemma 8.2 in \cite{scha}) but not exploited in \cite{scha}. The first identity, already considered in \cite{gui} and \cite{a} generalizes to the present PDE setting the classical theorem of conservation of mechanical energy. The other identity expresses an approximate orthogonality condition which does not have a finite dimensional counterpart. Following \cite{gui} we refer to these identities as Hamiltonian identities.
\begin{lemma}\label{properties}
Let $I\subset\R$ an interval and assume $u:I\times\R\rightarrow\R^m$ is a minimizer. Then
there exist constants $\omega$ and $\tilde{\omega}$ such that, for $x\in I$, it results
\begin{equation}\label{hamilton}
\begin{split}
&\int_\R\frac{1}{2}\vert u_x(x,y)\vert^2{d} y=\int_\R\Big(W(u(x,y))+\frac{1}{2}\vert u_y(x,y)\vert^2)\Big){d} y-c_0+\omega,\\
&\Leftrightarrow\\
&\frac{1}{2}\|u_x(x,\cdot)\|^2=J_\R(u(x,\cdot))-c_0+\omega.
\end{split}
\end{equation}
\begin{equation}\label{uxuyconst}
\int_\R u_x(x,y)\cdot u_y(x,y){d} y=\tilde{\omega},\quad\text{ for }\;x\in I.
\end{equation}
\end{lemma}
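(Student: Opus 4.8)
The plan is to deduce both identities from the single fact that a minimizer is a $C^2$ solution of (\ref{system}), combined with the exponential decay in $y$ of $u-\bar u$ and of its derivatives. First I would record the two pointwise consequences of $\Delta u=W_u(u)$ obtained by taking the scalar product with $u_x$ and with $u_y$ and rewriting everything in divergence form. Taking the scalar product with $u_x$ and using $u_x\cdot u_{yy}=\partial_y(u_x\cdot u_y)-\partial_x(\tfrac12\vert u_y\vert^2)$ gives
\[
\partial_x\Big(\tfrac12\vert u_x\vert^2-\tfrac12\vert u_y\vert^2-W(u)\Big)+\partial_y\big(u_x\cdot u_y\big)=0 ,
\]
and the scalar product with $u_y$ gives, in the same way,
\[
\partial_x\big(u_x\cdot u_y\big)+\partial_y\Big(\tfrac12\vert u_y\vert^2-\tfrac12\vert u_x\vert^2-W(u)\Big)=0 .
\]
These are nothing but the two rows of the divergence-free condition for the energy--momentum tensor associated with $\mathcal{J}$, and they are verified by a direct computation that uses only (\ref{system}).

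Next I would integrate each of these identities in $y$ over $\R$, for fixed $x\in I$. Here I would invoke the decay estimate (\ref{exp-der-decay}) (available from Lemma \ref{point-exp} for the minimizer of Theorem \ref{main}; the analogous estimates obtained via the Cut-Off Lemma will play the same role in Section \ref{schatzman}): it guarantees that $u_x,u_y,u_{xx},u_{xy}$ all decay like $e^{-k\vert y\vert}$, and, since $a_\pm$ are nondegenerate zeros of $W$ so that $W_u(a_\pm)=0$ and hence $W(u(x,y))=O(e^{-2k\vert y\vert})$, that $J_\R(u(x,\cdot))$ is finite and that $\partial_x$ may be interchanged with $\int_\R\cdot\,dy$ (the $x$-derivatives of the integrands are bounded by $Ce^{-k\vert y\vert}$, uniformly for $x$ in compact subsets of $I$). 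Integrating the first identity, the term $\partial_y(u_x\cdot u_y)$ contributes the boundary value $[u_x\cdot u_y]_{y=-\infty}^{y=+\infty}=0$, so
\[
\frac{d}{dx}\int_\R\Big(\tfrac12\vert u_x(x,y)\vert^2-\tfrac12\vert u_y(x,y)\vert^2-W(u(x,y))\Big)dy=0 ;
\]
thus $\tfrac12\Vert u_x(x,\cdot)\Vert^2-J_\R(u(x,\cdot))$ is constant on $I$, and calling this constant $\omega-c_0$ yields (\ref{hamilton}). Integrating the second identity, the $y$-derivative term contributes $[\tfrac12\vert u_y\vert^2-\tfrac12\vert u_x\vert^2-W(u)]_{y=-\infty}^{y=+\infty}=0$ (each summand tends to $0$ as $y\to\pm\infty$), so $\frac{d}{dx}\int_\R u_x\cdot u_y\,dy=0$ and $\int_\R u_x(x,y)\cdot u_y(x,y)\,dy$ equals a constant $\tilde\omega$; this is (\ref{uxuyconst}).

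The pointwise computations are entirely routine, so the one point requiring real attention is the justification of differentiation under the integral sign and of the vanishing of the boundary terms at $y=\pm\infty$; this is precisely where the a priori exponential decay of $u-\bar u$ and of its derivatives is essential, and it is the reason the identities are stated for minimizers rather than for arbitrary solutions of (\ref{system}). As a remark, one could obtain the same two identities directly as inner variations of $\mathcal{J}$ — perturbing $u$ by the domain deformations $x\mapsto x+\varepsilon\zeta(x)$ and by the shear $(x,y)\mapsto(x,y+\varepsilon\zeta(x))$ with $\zeta\in C_c^\infty(I)$ and imposing $\frac{d}{d\varepsilon}\mathcal{J}_\Omega\big|_{\varepsilon=0}=0$ — which exhibits the role of minimality more transparently; but the energy--momentum route above is shorter and suffices.
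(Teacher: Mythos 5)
Your proof is correct, but it takes a genuinely different route from the one in the paper. The paper proceeds exactly by the inner-variation argument you mention only as a remark at the end: it compares $u$ with the reparametrized map $v(s,y)=u(g(s),y)$ (for (\ref{hamilton})) and with the sheared map $v^\lambda(x,y)=u(x,y-\lambda g(x))$ (for (\ref{uxuyconst})), uses $\mathcal{J}_{\mathcal{R}_L(x_0)}(v)\geq\mathcal{J}_{\mathcal{R}_L(x_0)}(u)$ together with Lemmas \ref{point-exp}--\ref{extend} (to make sense of this minimality comparison on the unbounded strip), and extracts the identities from $\frac{d}{d\lambda}\mathcal{J}|_{\lambda=0}=0$ and the arbitrariness of $\sigma'$, $g'$ with zero average. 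You instead start from $\Delta u=W_u(u)$, pair with $u_x$ and $u_y$ to obtain the two divergence-free rows of the energy--momentum tensor, and integrate over $y\in\R$; this makes the role of the exponential decay (\ref{exp-der-decay}) in killing the boundary terms and in justifying $\partial_x\!\int=\int\!\partial_x$ completely explicit, which the paper leaves implicit. The two derivations are of course Noether-dual to one another (the pointwise conservation laws \emph{are} the Euler--Lagrange consequence of inner variations), and both invoke the same a priori decay; the paper's version displays directly where minimality enters and is the one reused verbatim for $u^L$ in Lemma \ref{yvar}, where the boundary conditions on $\{x=0\}\cup\{x=L\}$ are what forces $\sigma$ and $g$ to vanish there (and hence $\omega,\tilde\omega$ are in general nonzero); your version is shorter and more self-contained once (\ref{exp-der-decay}) is in hand. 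One small caveat: the identities hold for any $C^2$ solution of (\ref{system}) with that decay, so it is not quite that they are "stated for minimizers rather than for arbitrary solutions" because of the proof mechanism, but because the decay estimates themselves are established for minimizers.
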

\begin{proof}
Given $[x_0,x_0+L]\subset I$ let $g:[x_0,x_0+L]\rightarrow[x_0,x_0+L]$ a continuous increasing surjection with inverse $\gamma:[x_0,x_0+L]\rightarrow[x_0,x_0+L]$. Define
\[v(s,y)=u(g(s),y),\quad\text{ for }\;s\in[x_0,x_0+L],\;y\in\R.\]
Then the energy $\mathcal{J}_{\mathcal{R}_L(x_0)}(v)$ of $v$ in the strip $\mathcal{R}_L(x_0):=(x_0,x_0+L)\times\R$ is given by
\begin{equation}\label{jv}
\begin{split}
&\mathcal{J}_{\mathcal{R}_L(x_0)}(v)=\int_{x_0}^{x_0+L}\int_\R\Big(W(v(s,y)+\frac{1}{2}(\vert v_s(s,y)\vert^2+\vert v_y(s,y)\vert^2\Big){d} y{d} s\\
&=\int_{x_0}^{x_0+L}\gamma^\prime(x)\int_\R(W(u(x,y)+\frac{1}{2}\vert u_y(x,y)\vert^2){d} y{d} x\\
&+\int_{x_0}^{x_0+L}\frac{1}{\gamma^\prime(x)}\int_\R\frac{1}{2}\vert u_x(x,y)\vert^2{d} y{d} x.
\end{split}
\end{equation}
The minimality of $u$ implies
\begin{equation}\label{min-R}
\mathcal{J}_{\mathcal{R}_L(x_0)}(v)\geq\mathcal{J}_{\mathcal{R}_L(x_0)}(u),
\end{equation}
for all $\gamma$. Note that, since $\mathcal{R}_L(x_0)$ is unbounded, to state (\ref{min-R}) we need to invoke Lemma \ref{point-exp} and Lemma \ref{extend}. From (\ref{min-R}), if we set $\gamma(x)=x+\lambda\sigma(x)$ with $\sigma$ an arbitrary $C^1$ function that satisfies $\sigma(x_0)=\sigma(x_0+L)=0$, we obtain
\begin{equation}\label{arbitrary}
\begin{split}
&0=\frac{d}{d\lambda}\mathcal{J}_{\mathcal{R}_L(x_0)}(v)\vert_{\lambda=0}\\
&=\int_{x_0}^{x_0+L}\int_\R\Big(W(u(x,y)+\frac{1}{2}\vert u_y(x,y)\vert^2-\frac{1}{2}\vert u_x(x,y)\vert^2)\Big)\sigma^\prime(x){d} y{d} x.
\end{split}
\end{equation}
\noindent
 Since $\sigma^\prime$ is an arbitrary function with zero average and (\ref{arbitrary}) holds for every $x_0$ and every $L>0$ we obtain (\ref{hamilton}).

Let
 $g:[x_0,x_0+L]\rightarrow\R$ a $C^1$ function that satisfies
 \begin{equation}\label{0boundary}
 g(x_0)=g(x_0+L)=0.
 \end{equation}
 \noindent
Define
\[v^\lambda(x,y)=u(x,y-\lambda g(x)),\quad\text{ for }\;x\in[x_0,x_0+L],\;y\in\R,\;\lambda\in(-1,1).\]
Then
\begin{equation}\label{jv-1}
\begin{split}
&\mathcal{J}_{\mathcal{R}_L(x_0)}(v^\lambda)=\int_{x_0}^{x_0+L}\int_\R\Big(W(v^\lambda)+\frac{1}{2}(\vert v_x^\lambda\vert^2+\vert v_y^\lambda\vert^2\Big){d} y{d} x\\
&=\int_{x_0}^{x_0+L}\int_\R\Big(W(u)+\frac{1}{2}\vert u_x\vert^2+(1+\lambda^2\vert g^\prime\vert^2)\frac{1}{2}\vert u_y\vert^2
-\lambda g^\prime u_x\cdot u_y\Big){d} y{d} x.
\end{split}
\end{equation}
From (\ref{0boundary}) we have $v(x_0,\cdot)=u(x_0,\cdot)$ and $v(x_0+L,\cdot)=u(x_0+L,\cdot)$. This and the minimality of $u$ imply
\[0=\frac{d}{d\lambda}J_{\mathcal{R}_L(x_0)}(v^\lambda)\vert_{\lambda=0}=\int_{x_0}^{x_0+L} g^\prime\int_\R u_x\cdot u_y{d} y{d} x\]
for all $g^\prime$ such that $\int_{x_0}^{x_0+L}g^\prime{d} x=0$.
This proves (\ref{uxuyconst}) for some constant $\tilde{\omega}$.
The proof is complete.
\end{proof}
If, as suggested in the Introduction, we regard the minimizer $u:\R^2\rightarrow\R^m$ as a map from $\R$ to $\bar{u}+W^{1,2}(\R;\R^m)$ and interpret $x$ as time, we see that, as in classical mechanics, the identity (\ref{hamilton}) says exactly that the sum of kinetic energy $\frac{1}{2}\int_R\vert u_x\vert^2{d} y$ and potential energy, the negative of the effective potential, is a constant of the motion.

If $I=\R$ we have $\omega=\tilde{\omega}=0$.
\begin{lemma}\label{lemma=0}
Assume $u:\R^2\rightarrow\R^m$ is the minimizer in Theorem \ref{main}. Then
\begin{equation}\label{Hamilton=0}
\begin{split}
&\int_\R\frac{1}{2}\vert u_x\vert^2{d} y=\int_\R\Big(W(u)+\frac{1}{2}\vert u_y\vert^2\Big){d} y-c_0,\;\;x\in\R,\\
&\Leftrightarrow\\
&\frac{1}{2}\|u_x(x,\cdot)\|^2=J_\R(u(x,\cdot))-c_0,\;\;x\in\R.
\end{split}
\end{equation}
\begin{equation}\label{InnerProduct=0}
\int_\R u_x\cdot u_y{d} y=0,\;\;x\in\R,
\end{equation}
\end{lemma}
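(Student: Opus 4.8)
The plan is to invoke Lemma \ref{properties} with $I=\R$ — which produces constants $\omega$ and $\tilde\omega$ for which (\ref{hamilton}) and (\ref{uxuyconst}) hold on all of $\R$ — and then to show that the global bounds of Lemmas \ref{upper-bound} and \ref{kineticUpperBound} force $\omega=\tilde\omega=0$.

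For the first identity I would argue by integrability. Write $A(x):=\frac{1}{2}\|u_x(x,\cdot)\|^2$ and $B(x):=J_\R(u(x,\cdot))-c_0$, so that (\ref{hamilton}) becomes $A(x)-B(x)=\omega$ for every $x\in\R$. By Lemma \ref{point-exp} — in particular the decay (\ref{l2-der-norm}) — each slice $u(x,\cdot)$ belongs to $\bar{u}_1+W^{1,2}(\R;\R^m)$ and is therefore admissible in (\ref{barmin}), so the minimality of $\bar{u}_1,\dots,\bar{u}_N$ gives $B(x)\ge0$, while $A(x)\ge0$ is immediate. Estimate (\ref{KUpperBound}) gives $\int_\R A\,{d}x\le\frac{1}{2}C_0$, and (\ref{upper-bound-1}), applied for every $x_0\in\R$ and $L>0$, yields $\int_{x_0}^{x_0+L}(A+B)\,{d}x\le C_0$ and hence $\int_\R B\,{d}x\le C_0$ as well. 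Thus $A$ and $B$ are both nonnegative and integrable on $\R$. If $\omega>0$ then $A(x)=B(x)+\omega\ge\omega$ for all $x$, so $\int_\R A\,{d}x=+\infty$; if $\omega<0$ then $B(x)=A(x)-\omega\ge-\omega$ for all $x$, so $\int_\R B\,{d}x=+\infty$. Either possibility contradicts integrability, so $\omega=0$, which is (\ref{Hamilton=0}). (One could equally well use (\ref{KineticLim}): $A(x)\to0$ as $x\to\pm\infty$ together with $B\ge0$ already gives $\omega\le0$, and the finiteness of $\int_\R B\,{d}x$ then rules out $\omega<0$.)

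For the second identity I would let $x\to\pm\infty$ in (\ref{uxuyconst}). Cauchy--Schwarz gives $|\tilde\omega|\le\|u_x(x,\cdot)\|\,\|u_y(x,\cdot)\|$ for every $x$. By Lemma \ref{point-exp} the factor $\|u_y(x,\cdot)\|$ is bounded uniformly in $x$ — write $u_y=(u-\bar{u}_1)_y+\bar{u}_1^\prime$ and combine (\ref{l2-der-norm}) with (\ref{n-bound}) — whereas (\ref{KineticLim}) gives $\|u_x(x,\cdot)\|\to0$ as $x\to\pm\infty$. Hence $\tilde\omega=0$, which is (\ref{InnerProduct=0}).

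No serious obstacle arises at this stage: the real work is already contained in Lemmas \ref{upper-bound} and \ref{kineticUpperBound}. The only points that need a moment's care are the identification of $u(x,\cdot)$ as an admissible competitor for the one-dimensional problem (so that $B(x)\ge0$), which is exactly where the exponential decay in $y$ from Lemma \ref{point-exp} is used, and the routine uniform $L^2$ bound on $u_y(x,\cdot)$ required for the Cauchy--Schwarz step.
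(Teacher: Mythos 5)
Your proof is correct and rests on exactly the same ingredients the paper uses — Lemma \ref{properties} with $I=\R$, the energy upper bound (\ref{upper-bound-1}), the kinetic bounds (\ref{KUpperBound})--(\ref{KineticLim}), and Cauchy--Schwarz with the uniform bound on $\|u_y(x,\cdot)\|$ for $\tilde\omega=0$. The only cosmetic difference is that you kill $\omega>0$ via integrability of $A(x)=\tfrac12\|u_x(x,\cdot)\|^2$, whereas the paper lets $x\to+\infty$ in (\ref{hamilton}) to deduce $\omega\leq 0$ and then uses (\ref{upper-bound-1}) to exclude $\omega<0$; as you note, this is an interchangeable variant of the same argument.
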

\begin{proof}
From (\ref{hamilton}) and (\ref{KineticLim}) in Lemma \ref{kineticUpperBound} it follows
\[\lim_{x\rightarrow+\infty}\int_\R\Big(W(u)+\frac{1}{2}\vert u_y\vert^2\Big){d} y-c_0=-\omega\geq 0.\]
If $-\omega=\vert\omega\vert>0$ there exists $x_\omega$ such that
\[\int_\R\Big(W(u)+\frac{1}{2}\vert u_y\vert^2\Big){d} y\geq c_0+\frac{\vert\omega\vert}{2},\;\;x\geq x_\omega.\]
and therefore (\ref{upper-bound-1}) in Lemma \ref{upper-bound} yields
\[\begin{split}
&(c_0+\frac{\vert\omega\vert}{2})(x-x_\omega)\leq\int_{x_\omega}^x\int_\R\Big(W(u)+\frac{1}{2}\vert u_y\vert^2\Big){d} y{d} x\\
&\leq\mathcal{J}_{(x-x_\omega)\times\R}(u)\leq c_0(x-x_\omega)+C_0,\;\;x\geq x_\omega.
\end{split}\]
which is incompatible with the assumption $-\omega>0$. This establishes (\ref{Hamilton=0}). To prove (\ref{InnerProduct=0}) note that from (\ref{uxuyconst}) and (\ref{KineticLim}) it follows
\[\vert\tilde{\omega}\vert=\lim_{x\rightarrow+\infty}\vert\int_\R u_x\cdot u_y{d} y\vert
\leq C\lim_{x\rightarrow+\infty}\|u_x(x,\cdot)\|=0,\]
where we have also used that (\ref{l2-der-norm}) and (\ref{n-bound}) imply $\|u_y\|\leq C$. The proof is complete.
\end{proof}

\vskip.2cm

We can now derive a special representation formula for the \emph{kinetic energy} $\frac{1}{2}\int_\R\vert u_x\vert^2{d} y$ of $u$.
Note that by differentiating the identities
\begin{equation}\label{=1}
\begin{split}
&0=\langle v(x,\cdot),\bar{u}^\prime\rangle=\langle v(x,\cdot-\tau),\bar{u}^\prime(\cdot-\tau)\rangle,\\
&\|v(x,\cdot)\|^2=\|v(x,\cdot-\tau)\|^2,\\
&1=\|\nu(x,\cdot)\|^2=\|\nu(x,\cdot-\tau)\|^2,
\end{split}
\end{equation}
and recalling the definition $v=q\nu$ of $\nu$ for $q>0$ we obtain
\begin{equation}\label{identity}
\begin{split}
&\langle v_x(x,\cdot),\bar{u}^\prime\rangle=0,\\
&\|v_y(x,\cdot)\|^2+\langle v(x,\cdot),v_{yy}(x,\cdot)\rangle=0,\\
&\langle\nu_x(x,\cdot),\nu(x,\cdot)\rangle=\langle\nu_y(x,\cdot),\nu(x,\cdot)\rangle
=\langle\nu_x(x,\cdot),\bar{u}^\prime\rangle=0,\\
\end{split}
\end{equation}
\begin{lemma}\label{represent}
Assume $u:I\times\R\rightarrow\R^m$, $q^0$ and $q(x)\leq q^0$ as in Lemma \ref{hprime-hpprime?}. Then, if $q^0>0$ is sufficiently small, it results
\begin{equation}\label{hprime}
h^\prime(x)=\frac{\langle v_x(x,\cdot),v_y\rangle(x,\cdot)}{\|\bar{u}^\prime+v_y(x,\cdot)\|^2}
=\frac{q^2(x)\langle\nu_x(x,\cdot),\nu_y(x,\cdot)\rangle}{\|\bar{u}^\prime+q(x)\nu_y(x,\cdot)\|^2},
\end{equation}
and
\begin{equation}\label{xenergy}
\begin{split}
&\|u_x(x,\cdot)\|^2=\|v_x(x,\cdot)\|^2-\frac{\langle v_x(x,\cdot),v_y(x,\cdot)\rangle^2}{\|\bar{u}^\prime+v_y(x,\cdot)\|^2}\\
&=q_x^2(x)+q^2(x)\|\nu_x(x,\cdot)\|^2
-q^4(x)\frac{\langle\nu_x(x,\cdot),\nu_y(x,\cdot)\rangle^2}{\|\bar{u}^\prime+q(x)\nu_y(x,\cdot)\|^2}.
\end{split}
\end{equation}
Moreover the map
\[(0,q(x)]\ni p\rightarrow f(p,x)\|\nu_x(x,\cdot)\|^2:= p^2\|\nu_x(x,\cdot)\|^2-p^4\frac{\langle\nu_x(x,\cdot),\nu_y(x,\cdot)\rangle^2}{\|\bar{u}^\prime+p\nu_y(x,\cdot)\|^2}\]
is nonnegative and nondecreasing for each fixed $x\in I$.
\end{lemma}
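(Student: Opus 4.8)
The plan is to work entirely with the decomposition of Lemma~\ref{hprime-hpprime?}, reading off $h'$ and $\|u_x(x,\cdot)\|^2$ from the orthogonality relations (\ref{identity}) and the second Hamiltonian identity (\ref{InnerProduct=0}). First I would differentiate $u(x,y+h(x))=\bar u(y)+v(x,y)$ in $y$ and in $x$, which gives $u_y(x,\cdot+h(x))=\bar u'+v_y(x,\cdot)$ and $u_x(x,\cdot+h(x))=v_x(x,\cdot)-h'(x)\bigl(\bar u'+v_y(x,\cdot)\bigr)$; hence, by translation invariance of $\|\cdot\|$ and $\langle\cdot,\cdot\rangle$,
\[\|u_x(x,\cdot)\|^2=\bigl\|v_x-h'(x)(\bar u'+v_y)\bigr\|^2,\qquad\langle u_x(x,\cdot),u_y(x,\cdot)\rangle=\langle v_x,\bar u'+v_y\rangle-h'(x)\|\bar u'+v_y\|^2.\]
From (\ref{identity}) we have $\langle v_x(x,\cdot),\bar u'\rangle=0$, and from the identity $\|v_y(x,\cdot)\|^2=-\langle v(x,\cdot),v_{yy}(x,\cdot)\rangle$ in (\ref{identity}) together with (\ref{l2-der-norm1?}) we get $\|v_y(x,\cdot)\|^2\le q(x)\,K_1/\sqrt{k_1}\le q^0K_1/\sqrt{k_1}$; so for $q^0$ small $\|\bar u'+v_y(x,\cdot)\|\ge\|\bar u'\|-\|v_y(x,\cdot)\|>0$, and every denominator below is bounded away from $0$.

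Using $\langle v_x,\bar u'\rangle=0$, the second displayed identity becomes $\langle u_x,u_y\rangle=\langle v_x,v_y\rangle-h'(x)\|\bar u'+v_y\|^2$, and since the left side vanishes by (\ref{InnerProduct=0}) this is exactly the first equality in (\ref{hprime}); substituting $v=q\nu$, $v_x=q_x\nu+q\nu_x$, $v_y=q\nu_y$ and using $\langle\nu,\nu_y\rangle=0$ from (\ref{identity}) gives the second. Feeding this $h'$ back into $\|u_x(x,\cdot)\|^2=\|v_x\|^2-2h'\langle v_x,v_y\rangle+(h')^2\|\bar u'+v_y\|^2$ (again $\langle v_x,\bar u'\rangle=0$) yields the first line of (\ref{xenergy}), and one more substitution $v=q\nu$, using $\|\nu\|^2=1$ and $\langle\nu,\nu_x\rangle=\langle\nu,\nu_y\rangle=0$ from (\ref{identity}), yields the second.

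For the last assertion fix $x$; the case $\|\nu_x(x,\cdot)\|=0$ is trivial, so assume $\|\nu_x(x,\cdot)\|>0$. Since $\langle\nu_x(x,\cdot),\bar u'\rangle=0$, the orthogonal‑projection identity $\min_c\|w-ce\|^2=\|w\|^2-\langle w,e\rangle^2/\|e\|^2$ applied with $w=p\,\nu_x(x,\cdot)$ and $e=\bar u'+p\,\nu_y(x,\cdot)$ gives
\[f(p,x)\|\nu_x(x,\cdot)\|^2=\min_{c\in\R}\bigl\|p\,\nu_x(x,\cdot)-c\bigl(\bar u'+p\,\nu_y(x,\cdot)\bigr)\bigr\|^2\ge 0,\]
which settles nonnegativity. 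For monotonicity I would differentiate $\phi(p):=f(p,x)\|\nu_x(x,\cdot)\|^2=p^2\|\nu_x\|^2-p^4\langle\nu_x,\nu_y\rangle^2/\|\bar u'+p\nu_y\|^2$ in $p$; after clearing denominators, $\phi'(p)\ge 0$ becomes a polynomial inequality in $p$ that holds once $p\,\|\nu_y(x,\cdot)\|$ is small relative to $\|\bar u'\|$, provided one replaces the crude Cauchy–Schwarz bound $\langle\nu_x,\nu_y\rangle^2\le\|\nu_x\|^2\|\nu_y\|^2$ by the sharper $\langle\nu_x,\nu_y\rangle^2\le\|\nu_x\|^2\bigl(\|\nu_y\|^2-\langle\nu_y,\bar u'\rangle^2/\|\bar u'\|^2\bigr)$ coming from $\nu_x\perp\bar u'$ (equivalently, pass to an orthonormal frame whose first two vectors are along $\bar u'$ and $\nu_x$, which turns $\phi(p)$ into $p^2\|\nu_x\|^2\,[(\|\bar u'\|+p\alpha)^2+p^2\rho^2]/[(\|\bar u'\|+p\alpha)^2+p^2(\mu^2+\rho^2)]$ with $\alpha,\mu,\rho$ the coordinates of $\nu_y$ in that frame, after which a short differentiation of $\log\phi$ does it). The required smallness is exactly where the restriction $p\le q(x)$ is used: $p\,\|\nu_y(x,\cdot)\|=(p/q(x))\|v_y(x,\cdot)\|\le\|v_y(x,\cdot)\|\le(q^0K_1/\sqrt{k_1})^{1/2}\le\tfrac12\|\bar u'\|$ for $q^0$ small.

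I expect this last step to be the main obstacle: the naive Cauchy–Schwarz estimate is not enough, and one needs both the refinement produced by $\nu_x\perp\bar u'$ and the smallness of $\|v_y(x,\cdot)\|$ — the latter being itself a dividend of the ``hidden'' relation $\|v_y\|^2=-\langle v,v_{yy}\rangle$ in (\ref{identity}) together with the uniform bound (\ref{l2-der-norm1?}). Everything else is straightforward bookkeeping with the decomposition and the two Hamiltonian identities.
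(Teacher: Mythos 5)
Your derivation of (\ref{hprime}) and (\ref{xenergy}) is exactly the paper's: differentiate the decomposition, use $\langle v_x,\bar u'\rangle=0$ and $\langle\nu,\nu_x\rangle=\langle\nu,\nu_y\rangle=0$ from (\ref{identity}), feed $\tilde\omega=0$ from (\ref{InnerProduct=0}) into $\langle u_x,u_y\rangle$, and solve for $h'$ thanks to the lower bound $\|\bar u'+v_y\|\ge\tfrac12\|\bar u'\|$ coming from $\|v_y\|^2\le q(x)K_1/\sqrt{k_1}$. The rewriting of $f(p,x)\|\nu_x\|^2$ as $\min_c\|p\nu_x-c(\bar u'+p\nu_y)\|^2$ is a nice way to get nonnegativity directly; the paper only infers it from $f(0,\cdot)=0$ and the monotonicity.

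On the monotonicity step you go astray in one assessment. You claim that the ``naive Cauchy--Schwarz estimate is not enough'' and that one must exploit the orthogonality $\nu_x\perp\bar u'$ to refine it. That is not the case. The paper differentiates
\[D_pf=2p-4p^3\frac{\langle\tfrac{\nu_x}{\|\nu_x\|},\nu_y\rangle^2}{\|\bar u'+p\nu_y\|^2}
+2p^4\frac{\langle\tfrac{\nu_x}{\|\nu_x\|},\nu_y\rangle^2\,\langle\bar u'+p\nu_y,\nu_y\rangle}{\|\bar u'+p\nu_y\|^4}\]
and uses only the crude bounds $\langle\tfrac{\nu_x}{\|\nu_x\|},\nu_y\rangle^2\le\|\nu_y\|^2$, $|\langle\bar u'+p\nu_y,\nu_y\rangle|\le\|\bar u'+p\nu_y\|\,\|\nu_y\|$ and the lower bound on the denominator, to obtain
\[D_pf\ge p\Big(2-\tfrac{16}{\|\bar u'\|^2}\,p^2\|\nu_y\|^2-\tfrac{16}{\|\bar u'\|^3}\,p^3\|\nu_y\|^3\Big).\]
With the observation that you yourself make, namely $p\|\nu_y\|\le q(x)\|\nu_y\|=\|v_y\|\le C\sqrt{q^0}$ for $p\le q(x)$, the right-hand side is strictly positive once $q^0$ is small. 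So the smallness of $p\|\nu_y\|$ is the only ingredient; the refinement $\langle\nu_x,\nu_y\rangle^2\le\|\nu_x\|^2\big(\|\nu_y\|^2-\langle\nu_y,\bar u'\rangle^2/\|\bar u'\|^2\big)$ and the passage to an orthonormal frame, while correct, are unnecessary. Your proof is therefore valid, but it labels as an essential obstacle something that isn't one.
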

\begin{proof}
From (\ref{decomp-i1}) we obtain
\[\begin{split}
& u_x(x,\cdot)=-h^\prime(x)\Big(\bar{u}^\prime(\cdot-h(x))+v_y(x,\cdot-h(x))\Big)+v_x(x,\cdot-h(x)),\\
& u_y(x,\cdot)=\bar{u}^\prime(\cdot-h(x))+v_y(\cdot-h(x)).
\end{split}\]
and therefore (\ref{InnerProduct=0}) in Lemma \ref{lemma=0} and (\ref{identity}) imply
\begin{equation}\label{hprime1}
\begin{split}
&0=\langle u_x(x,\cdot),u_y(x,\cdot)\rangle=
-h^\prime(x)(\|\bar{u}^\prime+v_y(x,\cdot)\|^2+\langle v_x(x,\cdot),v_y(x,\cdot)\rangle.
\end{split}
\end{equation}
From (\ref{l2-der-norm1?}) and (\ref{identity}) it follows
\begin{equation}\label{q-nuy}
\|v_y(x,\dot)\|^2\leq\|v(x,\cdot)\|\|v_{yy}(x,\dot)\|\leq\frac{K_1}{\sqrt{k_1}}\|v(x,\cdot)\|\leq\frac{K_1}{\sqrt{k_1}}q^0,
\end{equation}
and in turn, provided $q^0$ is sufficiently small,

\begin{equation}\label{den-bounds}
\frac{1}{2}\|\bar{u}^\prime\|\leq\|\bar{u}^\prime+v_y(x,\cdot)\|\leq\frac{3}{2}\|\bar{u}^\prime\|.
\end{equation}
Therefore (\ref{hprime1}) can be solved for $h^\prime$ and the first expression of $h^\prime$ in (\ref{hprime}) is established. The other expression follows by (\ref{identity}) which implies $\langle v_x,v_y\rangle
=\langle q_x\nu+q\nu_x,q\nu_y\rangle=q^2\langle\nu_x,\nu_y\rangle$.
A similar computation that also uses (\ref{hprime}) yields (\ref{xenergy}).

It remains to prove the monotonicity of $f(p,\cdot)\|\nu_x\|^2$. We can assume $\|\nu_x\|>0$ otherwise there is nothing to be proved. We have, using also (\ref{q-nuy}) and (\ref{den-bounds}),
\[\begin{split}
&D_pf(p,\cdot)=2p-4p^3\frac{\langle\frac{\nu_x}{\|\nu_x\|},\nu_y\rangle^2}{\|\bar{u}^\prime+p\nu_y\|^2}
+2p^4\frac{\langle\frac{\nu_x}{\|\nu_x\|},\nu_y\rangle^2\langle\bar{u}^\prime+p\nu_y,\nu_y\rangle}{\|\bar{u}^\prime+p\nu_y\|^4}\\
&\geq p\Big(2-p^2\|\nu_y\|^2\frac{16}{\|\bar{u}^\prime\|^2}-p^3\|\nu_y\|^3\frac{16}{\|\bar{u}^\prime\|^3}\Big)\\
&\geq p\Big(2-q^0C^2\frac{16}{\|\bar{u}^\prime\|^2}
-(q^0)^\frac{3}{2}C^3\frac{16}{\|\bar{u}^\prime\|^3}\Big).
\end{split}\]
This proves $D_pf(p,\cdot)>0$ for $q^0\leq\frac{\|\bar{u}^\prime\|^2}{32 C^2}$.
The proof is complete.
\end{proof}

\subsection{Completing the proof of Theorem \ref{main}}\label{section24}

From (\ref{Hamilton=0}) and (\ref{KineticLim}) it follows
the existence of $x_0\in\R$ such that
\begin{equation}\label{WeffBound}
J_\R(u(x,\cdot))-c_0<\frac{e_{q^0}}{2},\;\;x\geq x_0
\end{equation}
and Lemma \ref{away} and $\|\cdot\|\leq\|\cdot\|_1$  imply
\begin{equation}\label{inNeig}
\min_j\min_{r\in\R}\|u(x,\cdot)-\bar{u}_j(\cdot-r)\|<q^0,\;\;x\geq x_0.
\end{equation}
Since we assume $q^0<q^*$, with $q^*>0$ defined in (\ref{q*}) as the minimal distance between any two distinct $\bar{u}_i$ and $\bar{u}_j$ and their translates, (\ref{inNeig}) says that, for $x\geq x_0$, $u(x,\cdot)$ is trapped in a neighborhood of one of the connections and Lemma \ref{lemmaw} implies the existence of uniquely determined $\bar{u}_+\in\{\bar{u}_1,\ldots,\bar{u}_N\}$, independent of $x\geq x_0$, and of a function $h:[x_0,+\infty)\rightarrow\R$ such that
\begin{equation}\label{NewOrt}
\begin{split}
&\|v(x,\cdot)\|=\|u(x,\cdot)-\bar{u}_+(\cdot-h(x))\|=\min_j\min_r\|u(x,\cdot)-\bar{u}_j(\cdot-r)\|,\\
& \langle v(x,\cdot),\bar{u}_+^\prime\rangle=\langle u(x,\cdot)-\bar{u}_+(\cdot-h(x)),\bar{u}_+^\prime(\cdot-h(x))\rangle=0,
\end{split}
\end{equation}
where $v(x,y)=u(x,y+h(x))-\bar{u}_+(y)$.

Note that (\ref{inNeig}) implies $q(x)=\|v(x,\cdot)\|<q^0$ and therefore, for $x\geq x_0$, we have that $u(x,\cdot)$ remains in the \emph{convex region} of the effective potential where (\ref{W2geqq2}) holds. We can expect that this implies exponential decay of $u(x,\cdot)$ to a translate of $\bar{u}_+$. We have indeed
\begin{lemma}\label{to-baru+}
There exist $k,C>0$ and $x_+, \eta_+\in\R$ such that
\begin{equation}\label{q(x)-0}
q(x)=\|v(x,\cdot)\|\leq\sqrt{2}q^0e^{-k(x-x_+)},\;\;x\geq x_+.
\end{equation}
and
\begin{equation}\label{u(x)-baru}
\|u(x,\cdot)-\bar{u}_+(\cdot-\eta_+)\|\leq C(q^0)^\frac{1}{2}e^{-\frac{k}{2}(x-x_+)},\;\;x\geq x_+.
\end{equation}
Analogous statements apply to the interval $(-\infty,x_-]$ for some $x_-,\eta_-\in\R$ and some
$\bar{u}_-\in\{\bar{u}_1,\ldots,\bar{u}_N\}$.
\end{lemma}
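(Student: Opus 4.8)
The plan is to treat $u(x,\cdot)$, for $x\ge x_0$, as an orbit in $\bar u_++W^{1,2}(\R;\R^m)$ that has entered the region where the effective potential $\mathcal W$ is convex, and to show that the squared distance to the critical manifold, $Q(x):=\tfrac12q^2(x)=\tfrac12\|v(x,\cdot)\|^2$, obeys a differential inequality $Q''\ge c^2Q$ with $c>0$; exponential decay then follows by a maximum‑principle comparison with $e^{-cx}$, and a summability argument for $h^\prime$ will pin down the limiting translate $\eta_+$. First I would record that $q(x)\to0$ as $x\to+\infty$: by translation invariance of $J_\R$ one has $J_\R(u(x,\cdot))-c_0=\mathcal W(v(x,\cdot))$, so the Hamiltonian identity (\ref{Hamilton=0}) reads $\tfrac12\|u_x(x,\cdot)\|^2=\mathcal W(v(x,\cdot))$; combined with (\ref{KineticLim}) and the coercivity (\ref{W2geqq2-i}), $\mathcal W(v(x,\cdot))\ge\tfrac12\mu\|v(x,\cdot)\|_1^2\ge\mu Q(x)$, this forces $Q(x)\to0$. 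Since moreover $q(x)<q^0$ on $[x_0,+\infty)$, on this whole half‑line the decomposition (\ref{decomp-i1}) holds with a single $\bar u_+$, with $h\in C^2$ and $\langle v(x,\cdot),\bar u_+^\prime\rangle=0$; we set $x_+:=x_0$.

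Next I would derive the differential inequality. The function $Q$ is $C^2$ (since $u\in C^{2,\gamma}$, $h\in C^2$, and $v$ decays exponentially in $y$ by Lemma \ref{hprime-hpprime?}, so one may differentiate under the integral), and $Q''=\|v_x(x,\cdot)\|^2+\langle v(x,\cdot),v_{xx}(x,\cdot)\rangle$. Writing $\Delta u=W_u(u)$ in the moving frame $y\mapsto y+h(x)$ yields an equation for $v$; substituting it, integrating by parts in $y$, and using $\langle v,\bar u_+^\prime\rangle=\langle v,v_y\rangle=0$ together with the representation (\ref{hprime}) of $h^\prime$, the terms carrying $h''$ cancel identically and one is left with
\[Q''(x)=\|u_x(x,\cdot)\|^2+\langle Tv(x,\cdot),v(x,\cdot)\rangle+E(x),\]
$T$ being the operator (\ref{operator}) and $E$ the sum of: the nonlinear remainder $\int\big(W_u(\bar u_++v)-W_u(\bar u_+)-W_{uu}(\bar u_+)v\big)\cdot v$, which is $O(\|v\|^{8/3})$ by (\ref{NonlTermEst}); the discrepancy $\langle Tv,v\rangle-2\mathcal W(v)$, again $O(\|v\|^{8/3})$ by (\ref{wef-tvv}); and terms quadratic in $h^\prime$ which, by (\ref{hprime}), (\ref{q-nuy}) and (\ref{den-bounds}), are $O\big(q(x)\,\mathcal W(v(x,\cdot))\big)$. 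Using $\|u_x\|^2=2\mathcal W(v)$ and (\ref{wef-tvv}) once more gives $\|u_x\|^2+\langle Tv,v\rangle=4\mathcal W(v)+O(\|v\|^{8/3})$, and since $\mathcal W(v)\ge\mu Q$ every term of $E$ is a small multiple of $\mathcal W(v)$ once $q^0$ is taken small enough, so $Q''(x)\ge 2\mathcal W(v(x,\cdot))\ge 2\mu Q(x)$ on $[x_+,+\infty)$, i.e. $Q''\ge c^2Q$ with $c:=\sqrt{2\mu}$.

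To conclude: since $Q\ge0$ is bounded, tends to $0$ at $+\infty$, and satisfies $Q''\ge c^2Q$ on $[x_+,+\infty)$, the difference $D:=Q-Q(x_+)e^{-c(\cdot-x_+)}$ has $D(x_+)=0$, $\limsup_{x\to+\infty}D\le0$ and $D''\ge c^2D$; a positive interior maximum of $D$ would contradict $D''\ge c^2D$, hence $D\le0$, which with $Q(x_+)\le\tfrac12(q^0)^2$ is precisely (\ref{q(x)-0}) (with $k=c/2$, the $\sqrt2$ being slack). For (\ref{u(x)-baru}): by (\ref{hprime}), $|h^\prime(x)|\le C\|v_x(x,\cdot)\|\,\|v_y(x,\cdot)\|\le Cq(x)$, where $\|v_y\|^2\le\tfrac{K_1}{\sqrt{k_1}}q$ is (\ref{q-nuy}) and $\|v_x\|^2\le2\|u_x\|^2=4\mathcal W(v)\le Cq$ follows from (\ref{wef-tvv}) and $\langle Tv,v\rangle\le\|v_y\|^2+C\|v\|^2$; thus $h^\prime$ is integrable on $[x_+,+\infty)$, so $h(x)\to\eta_+$ for some $\eta_+\in\R$ with $|h(x)-\eta_+|\le\int_x^{+\infty}|h^\prime|\le Cq^0e^{-k(x-x_+)}$, and then $\|u(x,\cdot)-\bar u_+(\cdot-\eta_+)\|\le q(x)+\|\bar u_+^\prime\|\,|h(x)-\eta_+|$ gives (\ref{u(x)-baru}). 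The statement on $(-\infty,x_-]$ is obtained verbatim after replacing $u(x,y)$ by $u(-x,y)$, which is again a minimizer.

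The main obstacle is the second step. Once one passes to the frame that tracks $\bar u_+$, the second $x$-derivative of $Q$ generates a cluster of terms in $h^\prime$, $h''$, $v_{yy}$ and the nonlinearity, and one must verify that the dangerous ones either vanish identically — this is the role of the orthogonality $\langle v,\bar u_+^\prime+v_y\rangle=0$, which annihilates the $h''$ contributions — or are of strictly higher order, bounded by $Cq^0$ or $C(q^0)^{2/3}$ times the coercive quantity $\mathcal W(v)\gtrsim Q$, so as to be absorbed into the leading term $4\mathcal W(v)$. Carrying this out cleanly draws on essentially everything established so far: the vanishing of the Hamiltonian constants (Lemma \ref{lemma=0}), the kinetic‑energy formula (Lemma \ref{represent}), the coercivity and cubic expansion of $\mathcal W$ near $\bar u_+$ (Lemma \ref{lemmaw-true}), and the bounds on $h$, $h^\prime$ and the $y$-decay of $v$ (Lemma \ref{hprime-hpprime?}).
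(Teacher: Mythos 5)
Your proposal is correct and follows essentially the same route as the paper: pass to the moving frame, compute the second $x$-derivative of $q^2$, use the orthogonality $\langle v,\bar u_+^\prime\rangle=0$ to kill the $h''$ contribution, absorb the nonlinear and $h'$ remainders into the coercive term $\mathcal W(v)\gtrsim q^2$, apply the maximum principle to get the exponential decay of $q$, and integrate $h'$ to identify $\eta_+$. The only differences are cosmetic: the paper discards $\|w_x\|^2=\|(u-\bar u_+(\cdot-h))_x\|^2$ from the outset via the crude inequality $\tfrac{d^2}{dx^2}\|w\|^2\ge 2\langle w_{xx},w\rangle$ and then lower‑bounds the surviving inner product, whereas you retain that term and obtain a slightly larger constant in $Q''\ge c^2Q$; and you sharpen the bound on $h'$ to $O(q)$ rather than the paper's $O(q^{1/2})$ by noting $\mathcal W(v)\le Cq$ (via $\|v_y\|^2\le Cq$), whereas the paper is content with the uniform bound $\|v_x\|^2\le 4e_{q^0}$. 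Both refinements are correct but immaterial for the stated conclusion.
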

Before giving the proof we remark on the different meaning of (\ref{q(x)-0}) and (\ref{u(x)-baru}). Equation
(\ref{q(x)-0}) says that, as $x\rightarrow+\infty$, $u(x,\cdot)$ converges exponentially to the manifold of the translates of $\bar{u}_+$ while (\ref{u(x)-baru}) implies convergence to a specific element of that manifold.
\begin{proof}

\noindent
1. There is $x_0\in\R$ such that
\begin{equation}\label{ElIn0}
\frac{d^2}{dx^2}\|v(x,\cdot)\|^2\geq\frac{1}{2}\mu\|v(x,\cdot)\|^2,\;\;x\geq x_0,
\end{equation}
where $\mu>0$ is the constant in (\ref{W2geqq2}).
To show this we begin by the elementary inequality
\begin{equation}\label{ElIn}
\begin{split}
&\frac{d^2}{dx^2}\|v(x,\cdot)\|^2=\frac{d^2}{dx^2}\|u(x,\cdot)-\bar{u}_+(\cdot-h(x))\|^2\\
&\geq2\Big\langle\frac{d^2}{dx^2}\Big(u(x,\cdot)-\bar{u}_+(\cdot-h(x))\Big),u(x,\cdot)-\bar{u}_+(\cdot-h(x))\Big\rangle.
\end{split}
\end{equation}
From
\[
\begin{split}
&\frac{d^2}{dx^2}\Big(u(x,\cdot)-\bar{u}_+(\cdot-h(x))\Big)\\
&=u_{xx}(x,\cdot)-\bar{u}_+^{\prime\prime}(\cdot-h(x))(h^\prime(x))^2+\bar{u}_+^\prime(\cdot-h(x))h^{\prime\prime}(x),
\end{split}
\]
and (\ref{ElIn}), using also (\ref{NewOrt}) and (\ref{hprime}), it follows
\begin{equation}\label{ElIn1}
\begin{split}
&\frac{d^2}{dx^2}\|v(x,\cdot)\|^2\geq2\langle u_{xx}(x,\cdot),u(x,\cdot)-\bar{u}_+(\cdot-h(x))\rangle\\
&-2\langle \bar{u}_+^{\prime\prime},v(x,\cdot)\rangle\frac{\langle v_x(x,\cdot),v_y(x,\cdot)\rangle^2}{\|\bar{u}_+^\prime+v_y(x,\cdot)\|^4}
=2I_1+2I_2,
\end{split}
\end{equation}
where we have also used $\langle f(\cdot-\tau),g(\cdot-\tau)\rangle=\langle f,g\rangle$.
Since both $u$ and $\bar{u}_+$ solve (\ref{system}) we have
\[
u_{xx}(x,\cdot)=W_u(u(x,\cdot))-W_u(\bar{u}_+(\cdot-h(x)))-\Big(u(x,\cdot)-\bar{u}_+(\cdot-h(x))\Big)_{yy}.
\]
Then, recalling the definition (\ref{operator}) of $T$ and that $v(x,\cdot)=u(x,\cdot+h(x))-\bar{u}_+$, after an integration by parts, we obtain
\[\begin{split}
&I_1=\Big\langle W_u(u(x,\cdot))-W_u(\bar{u}_+(\cdot-h(x)))-\Big(u(x,\cdot)-\bar{u}_+(\cdot-h(x))\Big)_{yy},\\
&\quad\quad u(x,\cdot)-\bar{u}_+(\cdot-h(x))\Big\rangle\\
&=\langle W_u(\bar{u}_++v(x,\cdot))-W_u(\bar{u}_+)-v_{yy}(x,\cdot),v(x,\cdot)\rangle\\
&=\langle W_u(\bar{u}_++v(x,\cdot))-W_u(\bar{u}_+),v(x,\cdot)\rangle+\|v_y(x,\cdot)\|^2\\
&=\langle W_u(\bar{u}_++v(x,\cdot))-W_u(\bar{u}_+)-W_{uu}(\bar{u}_+)v(x,\cdot),v(x,\cdot)\rangle
+\langle Tv(x,\cdot),v(x,\cdot)\rangle.
\end{split}\]
This, (\ref{NonlTermEst}) and (\ref{wef-tvv}) imply
\begin{equation}\label{I1}
I_1\geq 2\mathcal{W}(v(x,\cdot))-C\|v\|^\frac{8}{3},\;\;x\geq x_0.
\end{equation}
To estimate $I_2$ we note that
(\ref{q-nuy}) implies that, provided $q^0$ is sufficiently small, we can assume (\ref{den-bounds}) (with $\bar{u}=\bar{u}_+$) together with
\begin{equation}\label{vy-small}
\frac{\langle v_x(x,\cdot),v_y(x,\cdot)\rangle^2}{\|\bar{u}^\prime+v_y(x,\cdot)\|^2}
\leq\frac{1}{2}\|v_x(x,\cdot)\|^2,\,\;x\geq x_0.
\end{equation}
Then (\ref{xenergy}) and (\ref{Hamilton=0}) imply
\begin{equation}\label{UxxUpper}
\|v_x(x,\cdot)\|^2\leq 4\mathcal{W}(v(x,\cdot)),\;\;x\geq x_0,
\end{equation}
and we obtain
\[I_2\leq C\|v(x,\cdot)\|\mathcal{W}(v(x,\cdot)),\;\;x\geq x_0.\]
From this and (\ref{I1}), using also (\ref{W2geqq2-i}) and $\|v(x,\cdot)\|\leq q^0$, we conclude
\[
\begin{split}
&I_1+I_2\geq(2-C\|v(x,\cdot)\|)\mathcal{W}(v(x,\cdot))-C\|v(x,\cdot)\|^\frac{8}{3}\\
&\geq\frac{1}{4}\mu\|v(x,\cdot)\|^2,\;\;x\geq x_0,
\end{split}
\]
and (\ref{ElIn0}) follows from (\ref{ElIn1}).

\noindent
2. Since from (\ref{inNeig}) we have $\|v(x,\cdot)\|\leq q^0$ for $x\geq x_0$, from 1. and the maximum principle we get, for every $l>0$
\begin{equation}\label{MaxPri}
\|v(x,\cdot)\|^2\leq\varphi_l(x),\;\;x\in[x_0,x_0+2l],
\end{equation}
where
\[\varphi_l(x):= (q^0)^2\frac{\cosh{\sqrt{\frac{\mu}{2}}(l-(x-x_0))}}{\cosh{\sqrt{\frac{\mu}{2}}l}},\;\;x\in(x_0,x_0+2l),\]
is the solution of the problem
\[\left\{\begin{array}{l}
\varphi^{\prime\prime}=\frac{\mu}{2}\varphi,\;\;x\in(x_0,x_0+2l),\\\\
\varphi(x_0)=\varphi(x_0+2l)=(q^0)^2.
\end{array}\right.\]
The estimate (\ref{q(x)-0}), with $k=\frac{1}{2}\sqrt{\frac{\mu}{2}}$ and $x_+=x_0$, follows from (\ref{MaxPri}) and the inequality
\[\varphi_l(x)\leq 2(q^0)^2e^{-\sqrt{\frac{\mu}{2}}(x-x_0)},\;\;x\in[x_0,x_0+l],\]
which is valid for all $l>0$.

\noindent
3. It results
\begin{equation}\label{hprimeUpper}
\vert h^\prime(x)\vert\leq C\|v(x,\cdot)\|^\frac{1}{2},\;\;x\geq x_0.
\end{equation}
From (\ref{UxxUpper}) and $\mathcal{W}(v(x,\cdot))\leq e_{q^0}$ we have
\[\|v_x(x,\cdot)\|^2\leq 4e_{q^0},\;\;x\geq x_0.\]
Then 3. follows from (\ref{hprime}), (\ref{q-nuy}) and (\ref{den-bounds}).

\noindent
4. Point 3. and (\ref{q(x)-0}) imply
\[\vert h^\prime(x)\vert\leq C(q^0)^\frac{1}{2}e^{-\frac{k}{2}(x-x_+)},\;\;x\geq x_+.\]
Therefore there exists $\eta_+\in\R$ such that
\[\lim_{x\rightarrow+\infty}h(x)=\eta_+\] and the convergence is exponential
\begin{equation}\label{EtaHaExp}
\vert\eta_+-h(x)\vert\leq C(q^0)^\frac{1}{2}e^{-\frac{k}{2}(x-x_+)},\;\;x\geq x_+.
\end{equation}
Now observe that from
\[\begin{split}
&\vert\bar{u}_+(y-(h(x)-\eta_+))-\bar{u}_+(y)\vert\\
&\leq\int_0^1\vert\bar{u}^\prime_+(y-t(h(x)-\eta_+))\vert{d} t\vert h(x)-\eta_+\vert\\
&\leq\Big(\int_0^1\vert\bar{u}^\prime_+(y-t(h(x)-\eta_+))\vert^2{d} t\Big)^\frac{1}{2}\vert h(x)-\eta_+\vert,
\end{split}\]
it follows
\begin{equation}\label{EtaDiff}
\begin{split}
&\|\bar{u}_+(\cdot-(h(x)-\eta_+))-\bar{u}_+(y)\|^2\\
&\leq\int_\R\int_0^1\vert\bar{u}^\prime_+(y-t(h(x)-\eta_+))\vert^2{d} t{d} y\vert h(x)-\eta_+\vert^2\\
&=\int_0^1\int_\R\vert\bar{u}^\prime_+(y-t(h(x)-\eta_+))\vert^2{d} y{d} t\vert h(x)-\eta_+\vert^2\\
&=\|\bar{u}_+\|^2\vert h(x)-\eta_+\vert^2.
\end{split}
\end{equation}
We have
\[\begin{split}
&\|u(x,\cdot)-\bar{u}_+(\cdot-\eta_+)\|\\
&\leq\|u(x,\cdot)-\bar{u}_+(\cdot-h(x))\|+\|\bar{u}_+(\cdot-h(x))-\bar{u}_+(\cdot-\eta_+)\|\\
&=\|v(x,\cdot)\|+\|\bar{u}_+(\cdot-(h(x)-\eta_+))-\bar{u}_+\|.
\end{split}\]
This, (\ref{EtaDiff}), (\ref{EtaHaExp}) and (\ref{q(x)-0}) imply (\ref{u(x)-baru}) and conclude the proof.
\end{proof}
Continuing the proof of Theorem \ref{main} we note that we have already established (\ref{asymptotic-u})$_1$ in (\ref{FirstSta}) and that (\ref{asymptotic-u})$_2$ follows from (\ref{u(x)-baru}) and Lemma \ref{L2mpliesLinfty}.
It remain to discuss the case $\bar{u}_-=\bar{u}_+=\bar{u}$.

From the previous analysis we know that $u(x,\cdot)$ remains in a $q^0$ neighborhood of $\bar{u}(\cdot-\eta_-)$ in $(-\infty,x_-]$ and of $\bar{u}(\cdot-\eta_+)$ in $[x_+,+\infty)$. The problem is to analyse what happens in the interval $(x_-,x_+)$. We prove that for $u(x,\cdot)$ is more convenient to remain near the manifold of the translates of $\bar{u}$ also in $(x_-,x_+)$. Indeed we show that to travel away from this manifold and come back to it is more penalizing from the point of view of minimizing the energy.

In the following,
 for $x$ in certain subintervals of $(x_-,x_+)$, we use test functions of the form
\begin{equation}\label{test}
\hat{u}(x,y)=\bar{u}(y-\hat{h}(x))+\hat{q}(x)\nu(x,y-\hat{h}(x))
\end{equation}
for suitable choices of the functions $\hat{q}=\hat{q}(x)$ and $\hat{h}=\hat{h}(x)$. We always take $\hat{q}(x)\leq q(x)\leq q^0$. Note that in (\ref{test}) the direction vector $\nu$ is the one associated to $u$ in the decomposition (\ref{decomp-i1}).

\noindent
From (\ref{test}) it follows
\begin{equation}\label{hatux-square}
\int_\R\vert\hat{u}_x\vert^2=(\hat{h}^\prime)^2\|\bar{u}^\prime+\hat{q}\nu_y\|^2-2\hat{h}^\prime \hat{q}^2\langle\nu_x,\nu_y\rangle+\hat{q}_x^2+\hat{q}^2\|\nu_x\|^2.
\end{equation}
We choose the value of $\hat{h}$ that minimizes (\ref{hatux-square}) that is
\begin{equation}\label{hat-hprime}
\hat{h}^\prime=\hat{q}^2\frac{\langle\nu_x,\nu_y\rangle}{\|\bar{u}^\prime+\hat{q}\nu_y\|^2},
\end{equation}
then we get
\begin{equation}\label{hat-xenergy0}
\int_\R\vert\hat{u}_x\vert^2=\hat{q}_x^2+\hat{q}^2\|\nu_x\|^2-\hat{q}^4\frac{\langle\nu_x,\nu_y\rangle^2}{\|\bar{u}^\prime+\hat{q}\nu_y\|^2}.
\end{equation}
Therefore we obtain the expression of the energy density of the test map $\hat{u}$
\begin{equation}\label{hat-xenergy}
\begin{split}
&\int_\R\frac{1}{2}\vert\hat{u}_x\vert^2+\int_\R(W(\hat{u})+\frac{1}{2}\vert\hat{u}_y\vert^2)-c_0\\
&=\frac{1}{2}\Big(\hat{q}_x^2+\hat{q}^2\|\nu_x\|^2-\hat{q}^4\frac{\langle\nu_x,\nu_y\rangle^2}{\|\bar{u}^\prime+\hat{q}\nu_y\|^2}\Big)+\mathcal{W}(\hat{q},\nu).
\end{split}
\end{equation}
Note that, since we do not change the direction vector $\nu(x,\cdot)$, this  expression is completely determined once we fix the function $\hat{q}$.
\begin{lemma}\label{NoMax}
Let $I\subset\R$ be an interval and assume that the minimizer $u:\R^2\rightarrow\R^m$ satisfies
\[q(x)\leq q^0,\;\;x\in I.\]
Then the map $x\rightarrow q(x)$ can not have points of maximum in I meaning that there are no $x_1<x^*<x_2\in I$ such that
\[q(x_i)<q(x^*),\;\;i=1,2.\]
\end{lemma}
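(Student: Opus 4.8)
The plan is to argue by contradiction using a competitor built from the test map family $\hat{u}(x,y)=\bar{u}(y-\hat{h}(x))+\hat{q}(x)\nu(x,y-\hat{h}(x))$ introduced in \eqref{test}. Suppose there were $x_1<x^*<x_2$ in $I$ with $q(x_i)<q(x^*)$, $i=1,2$. Set $\bar{q}:=\max\{q(x_1),q(x_2)\}<q(x^*)$. I would replace $u$ on the strip $\mathcal{R}=(x_1,x_2)\times\R$ by the test map $\hat{u}$ with the \emph{same} direction field $\nu(x,\cdot)$ as $u$ but with $\hat{q}$ chosen to be a ``clipped'' version of $q$: namely $\hat{q}(x)=\min\{q(x),\bar{q}\}$. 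Since $q$ is continuous and exceeds $\bar{q}$ at $x^*$ but equals $q(x_i)\le\bar{q}$ at the endpoints, on the set $\{q>\bar{q}\}$ the function $\hat{q}$ is flat ($\hat{q}_x=0$ there) while on the complement $\hat{q}=q$ and the two maps agree (so in particular $\hat{u}=u$ near $\partial\mathcal{R}$, making it an admissible competitor by Lemma~\ref{extend}).

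\textbf{Comparing the energies.} By \eqref{hat-xenergy} the energy of $\hat{u}$ over $\mathcal{R}$ is
\[
\int_{x_1}^{x_2}\Big[\tfrac12\hat{q}_x^2+\tfrac12 f(\hat{q},x)\|\nu_x\|^2+\mathcal{W}(\hat{q},\nu)\Big]dx,
\]
where $f(p,x)\|\nu_x\|^2=p^2\|\nu_x\|^2-p^4\langle\nu_x,\nu_y\rangle^2/\|\bar{u}^\prime+p\nu_y\|^2$ is, by the last assertion of Lemma~\ref{represent}, nonnegative and nondecreasing in $p$ on $(0,q(x)]$; and by Lemma~\ref{lemmaw-true} the effective potential $\mathcal{W}(\cdot,\nu)$ is increasing on $[0,q^0]$. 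Meanwhile the true energy of $u$ over $\mathcal{R}$ is, combining \eqref{Hamilton=0} and \eqref{xenergy},
\[
\int_{x_1}^{x_2}\Big[\tfrac12 q_x^2+\tfrac12 f(q,x)\|\nu_x\|^2+\mathcal{W}(q,\nu)\Big]dx + \big(\text{terms that cancel via the Hamiltonian identity}\big),
\]
so that $\mathcal{J}_{\mathcal{R}}(u)=\int_{x_1}^{x_2}\!\big[q_x^2+f(q,x)\|\nu_x\|^2+2\mathcal{W}(q,\nu)\big]dx$ while $\mathcal{J}_{\mathcal{R}}(\hat{u})=\int_{x_1}^{x_2}\!\big[\hat{q}_x^2+f(\hat{q},x)\|\nu_x\|^2+2\mathcal{W}(\hat{q},\nu)\big]dx$ --- I should double-check that the Hamiltonian identity \eqref{Hamilton=0} lets me write the $u$-energy density as exactly $\|u_x\|^2$, i.e.\ as twice the kinetic part, which is what makes the $\mathcal{W}$ and $f$ contributions appear with matching coefficients. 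On the set where $q\le\bar{q}$ the two integrands coincide; on the (nonempty, open) set where $q>\bar{q}$ we have $\hat{q}_x=0\le q_x^2$, $f(\hat{q},x)\le f(q,x)$ by monotonicity, and $\mathcal{W}(\hat{q},\nu)<\mathcal{W}(q,\nu)$ since $\hat{q}=\bar{q}<q\le q^0$ and $\mathcal{W}$ is strictly increasing there (strictness from \eqref{W2geqq2}). Hence $\mathcal{J}_{\mathcal{R}}(\hat{u})<\mathcal{J}_{\mathcal{R}}(u)$ strictly, contradicting minimality of $u$ via Lemma~\ref{extend}.

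\textbf{The main obstacle} I anticipate is verifying that $\hat{u}$ is a legitimate competitor in the sense required by Lemma~\ref{extend}: it must lie in $u+W^{1,2}(\R^2;\R^m)$, agree with $u$ on the two vertical boundary lines $x=x_1,x_2$, and satisfy the exponential decay bound $|v(x,y)-a_{\sigma(y)}|,|\nabla u(x,y)|\le Ke^{-k|y|}$. Boundary agreement is automatic since $\hat q=q$ at $x_1,x_2$ and $\nu$ is unchanged. The decay and $W^{1,2}$ membership follow from Lemma~\ref{hprime-hpprime?}: the decomposition data $v,\nu,h$ for $u$ have the decay \eqref{exp-der-decay1?}, and $\hat v(x,y)=\hat q(x)\nu(x,y)$ inherits it since $\hat q\le q^0$ and $\nu$ is bounded with exponentially decaying $y$-derivatives. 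One must also confirm $\hat q$ is Lipschitz so $\hat q_x\in L^2_{\mathrm{loc}}$ --- this holds because $q$ is (it is $C^1$, being built from the $C^2$ function $h$ and $u$) and clipping preserves Lipschitz continuity. A second, subtler point is ensuring $\hat q(x)\le q(x)$ pointwise so that the monotonicity of $p\mapsto f(p,x)$ on $(0,q(x)]$ genuinely applies at the relevant $x$ --- but by construction $\hat q=\min\{q,\bar q\}\le q$ everywhere, so this is built in. With these checks in place the contradiction is complete.
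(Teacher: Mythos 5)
Your overall strategy --- compare $u$ on a strip $(x_1,x_2)\times\R$ with a test map $\hat u$ of the form \eqref{test} having the same direction field $\nu$ but a strictly smaller amplitude $\hat q$, and exploit the monotonicity of $p\mapsto f(p,x)$ from Lemma~\ref{represent} together with the strict monotonicity of $\mathcal{W}(\cdot,\nu)$ from Lemma~\ref{lemmaw-true} --- is exactly the idea the paper uses. But there is a genuine gap in your admissibility check, and it is precisely the point where the paper has to work hardest.

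You assert ``Boundary agreement is automatic since $\hat q=q$ at $x_1,x_2$ and $\nu$ is unchanged.'' This is false. For $\hat u$ to coincide with $u$ on the vertical line $x=x_i$ you need \emph{both} $\hat q(x_i)=q(x_i)$ \emph{and} $\hat h(x_i)=h(x_i)$. The energy expression \eqref{hat-xenergy} you invoke is only valid after substituting the minimizing choice of $\hat h'$ from \eqref{hat-hprime}, so $\hat h$ is forced to be the solution of the ODE $\hat h'=\hat q^2\langle\nu_x,\nu_y\rangle/\|\bar u'+\hat q\nu_y\|^2$. Fixing $\hat h(x_1)=h(x_1)$, the two ODEs for $\hat h$ and $h$ agree on $\{q\le\bar q\}$ (since there $\hat q=q$, compare \eqref{hprime}) but differ on the open set $\{q>\bar q\}$, so after crossing that set $\hat h$ picks up a nonzero offset: $\hat h(x_2)-h(x_2)\neq 0$ in general, and hence $\hat u(x_2,\cdot)=u(x_2,\cdot-(\hat h(x_2)-h(x_2)))\neq u(x_2,\cdot)$. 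Your competitor is not admissible for the Dirichlet comparison in Lemma~\ref{extend}. (You also cannot avoid this by simply setting $\hat h\equiv h$ instead: then \eqref{hat-xenergy} no longer holds, the cross terms $-2h'\hat q^2\langle\nu_x,\nu_y\rangle+(h')^2\|\bar u'+\hat q\nu_y\|^2$ remain, and the pointwise inequality $\|\hat u_x\|^2\le\|u_x\|^2$ is no longer evident.)

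The paper resolves exactly this obstruction: after the interval $[x_1,x_2]$ it appends a ``relaxation'' strip $(x_2,x_2+l)$, defining $\tilde u(x,\cdot)=u\bigl(x,\cdot-(\hat h(x_2)-h(x_2))(1-\tfrac{x-x_2}{l})\bigr)$, which interpolates the translation back to zero. Using the second Hamiltonian identity \eqref{InnerProduct=0} (which kills the $u_x\cdot u_y$ cross term in \eqref{jv-1}), the extra energy cost of this strip is shown in \eqref{jv-111} to be $O\bigl((\hat h(x_2)-h(x_2))^2/l\bigr)$, hence arbitrarily small for $l$ large, so the strict gain on $(x_1,x_2)$ wins. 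Your proof must include this (or an equivalent) mechanism to close the argument. Incidentally, the paper uses the reflected amplitude $\hat q=2q_0-q$ (so $\hat q_x^2=q_x^2$, and restricts so that $q\le 2q_0$, ensuring $\hat q\ge 0$) rather than your clipping $\hat q=\min\{q,\bar q\}$; either interior choice would work once the boundary issue is addressed.
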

\begin{proof}
Assume instead that $x_1<x^*<x_2\in I$ with $q(x_i)<q(x^*),\;\;i=1,2$ exist. Since $q=q(x)$ is continuous we can assume $q(x^*)=\max_{x\in[x_1,x_2]}q(x)$ and, by restricting the interval $(x_1,x_2)$ if necessary, that
\[q_0=q(x_i)<q(x)\leq q(x^*),\;\;i=1,2,\;x\in(x_1,x_2)\]
for some $q_0\in(0,q^0)$ that satisfies the condition
\[q(x^*)\leq 2q_0.\]
We show that this is in contradiction with the minimality of $u$ by constructing a competing function $\tilde{u}$ defined as follows: in the interval $(-\infty,x_1)$ we take
\begin{equation}\label{compar1}
\tilde{u}(x,\cdot)=u(x,\cdot),\quad\text{ for }\;x\in(-\infty, x_1).
\end{equation}
In the interval $[x_1,x_2]$ we take:
\begin{equation}\label{compar2}
\tilde{u}(x,\cdot)=\hat{u}(x,\cdot),\quad\text{ with }\;\hat{q}(x)=2q_0-q(x),\quad\text{ for }\;x\in[x_1, x_2],
\end{equation}
where $\hat{u}$ is defined in (\ref{test}) with $\hat{q}=2q_0-q$ and $\hat{h}$ the solution of (\ref{hat-hprime}) with initial condition $\hat{h}(x_1)=h(x_1)$. With this definition $\tilde{u}$ is continuous at $x=x_1$. Indeed, since $\hat{q}(x_1)=2q_0-q(x_1)=q_0=q(x_1)$ we have
\[\begin{split}
&\hat{u}(x_1,y)=\bar{u}(y-\hat{h}(x_1))+\hat{q}(x_1)\nu(x,y-\hat{h}(x_1))\\
&=\bar{u}(y-h(x_1))+q(x_1)\nu(x,y-h(x_1))=u(x_1,y).
\end{split}\]
For $x=x_2$ we have instead
\[\begin{split}
&\hat{u}(x_2,y)=\bar{u}(y-\hat{h}(x_2))+\hat{q}(x_2)\nu(x,y-\hat{h}(x_2))\\
&=\bar{u}(y-\hat{h}(x_2))+q(x_2)\nu(x,y-\hat{h}(x_2))\\
&=\bar{u}(y-h(x_2)-(\hat{h}(x_2)-h(x_2)))+q(x_2)\nu(x,y-h(x_2)-(\hat{h}(x_2)-h(x_2)))\\
&=u(x_2,y-(\hat{h}(x_2)-h(x_2))).
\end{split}\]
That is, at $x=x_2$, the function $\hat{u}(x_2,\cdot)$ coincides with the translation $u(x,\cdot-(\hat{h}(x_2)-h(x_2)))$ of $u(x,\cdot)$
where
\[\hat{h}(x_2)-h(x_2)=\int_{x_1}^{x_2}(\hat{h}^\prime-h^\prime){d} x=\int_{x_1}^{x_2}
\Big(\frac{(2q_0-q)^2\langle\nu_x,\nu_y\rangle}
{\|\bar{u}^\prime+(2q_0-q)\nu_y\|^2}-\frac{q^2\langle\nu_x,\nu_y\rangle}{\|\bar{u}^\prime+q\nu_y\|^2}\Big){d} x.\]
To compensate for this translation it is natural to complete the definition of $\tilde{u}$ by setting
\begin{equation}\label{compar3}
\begin{split}
&\tilde{u}(x,\cdot)=u\Big(x,\cdot-(\hat{h}(x_2)-h(x_2))(1-\frac{x-x_2}{l})\Big),\;\;x\in(x_2,x_2+l],\\
&\tilde{u}(x,\cdot)=u(x,\cdot),\;\;x\in(x_2+l,+\infty)
\end{split}
\end{equation}
so that $\tilde{u}(x,\cdot)$ is continuous at $x_2+l$ and coincides with $u(x,\cdot-(\hat{h}(x_2)-h(x_2)))$ for $x=x_2$.
The idea here is that, for large $l>0$, the contribution of the interval $(x_2,x_2+l)$ to the difference of energy between $u$ and $\tilde{u}$ can be disregarded with respect to the contribution of the interval $(x_1,x_2)$. Proceeding as in the proof of Lemma \ref{properties} with $g:[x_2,x_2+l]\rightarrow\R$ the linear function
\[g(x)=(\hat{h}(x_2)-h(x_2))(1-\frac{x-x_2}{l}),\;\;x\in[x_2,x_2+l],\]
we arrive at equation (\ref{jv-1}) with $g^\prime=-\frac{\hat{h}(x_2)-h(x_2)}{l}$, $\lambda=1$ and without the last term which vanishes on the basis of Lemma \ref{lemma=0}
\begin{equation}\label{jv-11}
\begin{split}
&\mathcal{J}_{(x_2,x_2+l)\times\R}(\tilde{u})\\
&=\int_{x_2}^{x_2+l}\Big(\int_\R(W(u)+\frac{1}{2}\vert u_x\vert^2)dy+(1+\vert g^\prime\vert^2)\int_\R\frac{1}{2}\vert u_y\vert^2{d} y\Big){d} x\\
&=\mathcal{J}_{(x_2,x_2+l)\times\R}(u)+\int_{x_2}^{x_2+l}\vert g^\prime\vert^2\int_\R\frac{1}{2}\vert u_y\vert^2{d} y{d} x.
\end{split}
\end{equation}
It follows
\begin{equation}\label{jv-111}
\begin{split}
&\mathcal{J}_{(x_2,x_2+l)\times\R}(\tilde{u})
-\mathcal{J}_{(x_2,x_2+l)\times\R}(u)\\
&=\int_{x_2}^{x_2+l}\vert g^\prime\vert^2\int_\R\frac{1}{2}\vert u_y\vert^2{d} y{d} x\leq C\frac{(\hat{h}(x_2)-h(x_2))^2}{l}.
\end{split}
\end{equation}
where we have also used (\ref{l2-der-norm}) and (\ref{n-bound}).
From the definition (\ref{compar1}), (\ref{compar2}) and (\ref{compar3}) of $\tilde{u}$ we have that in $(x_1,x_2)$ it results $\hat{q}_x=(2q_0-q)_x=-q_x$ and therefore
\begin{equation}\label{hatq-x}
\hat{q}_x^2=q_x^2.
\end{equation}
Moreover from (\ref{q-nuy}) and Lemma \ref{represent} it follows
\[f(\hat{q}(x))\leq f(q(x)),\quad x\in(x_1,x_2).\]
From this (\ref{hatq-x}) and (\ref{W2geqq2}) in Lemma \ref{lemmaw-true} that implies the strict monotonicity of the map $q\rightarrow\mathcal{W}(q,\nu)$ we conclude
\[\mathcal{J}_{(x_1,x_2)\times\R}(\tilde{u})<\mathcal{J}_{(x_1,x_2)\times\R}(u).\]
This and (\ref{jv-111}), for $l>0$ sufficiently large, imply
\[\begin{split}
&\mathcal{J}_{(x_1,x_2+l)\times\R}(u)-\mathcal{J}_{(x_1,x_2+l)\times\R}(\tilde{u})\\
&\geq\mathcal{J}_{(x_1,x_2)\times\R}(u)-\mathcal{J}_{(x_1,x_2)\times\R}(\tilde{u})- C\frac{(\hat{h}(x_2)-h(x_2))^2}{l}>0,
\end{split}\]
in contradiction with the minimality of $u$. The proof is complete.
\end{proof}
\begin{remark}\label{TransPoss}
Later we consider a situation where the minimizer $u$ is defined in a bounded strips $[0,L]\times\R$ and satisfies a boundary condition of the form $u(L,\cdot)=\bar{u}_+(\cdot-\eta)$ where $\eta\in\R$ is a free parameter. In this situation the conclusion of Lemma \ref{NoMax} still applies with a simpler proof. Indeed the competing map $\tilde{u}$ can be defined exactly as in (\ref{compar1}) in the interval $[0,x_1]$ and as in (\ref{compar2}) in $(x_1,x_2)$ and, since $\eta\in\R$ can be chosen freely, by setting simply
\[\tilde{u}(x,\cdot)=u(x,\cdot-(\hat{h}(x_2)-h(x_2))),\;\;x\in[x_2,L].\]
\end{remark}
From Lemma \ref{NoMax} and Lemma \ref{to-baru+} it follows that, under the assumption  $\bar{u}_-=\bar{u}_+=\bar{u}$, if $u$ does not satisfy (\ref{unique}) then
\begin{equation}\label{Out}
\{x\in\R: q(x)>q^0\}\neq\emptyset.
\end{equation}
Indeed, since $\lim_{x\rightarrow\pm\infty}q(x)=0$ by Lemma \ref{to-baru+}, if $q(x)\leq q^0$, for all $x\in\R$, then Lemma \ref{NoMax} implies
\[q(x)\equiv 0,\]
and by (\ref{hprime})
\[h^\prime(x)\equiv 0,\]
and we conclude that (\ref{unique}) holds. We show that (\ref{Out}) cannot occur by constructing a map that competes energetically with $u$. Our construction is inspired by an argument from \cite{af2} (see Lemma 3.4).

\noindent
We fix a point $x^*\in\{x\in\R: q(x)>q^0\}$ and focus on the intervals $[\tilde{\xi}_1,\tilde{\xi}_2]\subset(\xi_1,\xi_2)$ defined by
\[\begin{split}
&\tilde{\xi}_1=\min\{x<x^*:q(x)\geq q^0\},\\
&\tilde{\xi}_2=\max\{x>x^*:q(x)\geq q^0\},
\end{split}\]
and
\[\begin{split}
&\xi_1=\max\{x<\tilde{\xi}_1:q(x)\leq\frac{q^0}{2}\},\\
&\xi_2=\min\{x>\tilde{\xi}_2:q(x)\leq\frac{q^0}{2}\}.
\end{split}\]
Note that
\begin{equation}\label{xtildex-interv}
\begin{split}
&q(\tilde{\xi_1})=q(\tilde{\xi_2})=q^0,\\
&q(\xi_1)=q(\xi_2)=\frac{q^0}{2},
\end{split}
\end{equation}
and also that
\begin{equation}\label{xtildex-interv1}
q(x)\in(\frac{q^0}{2},q^0),\quad\text{ for }\;\;x\in(\xi_1,\tilde{\xi}_1)\cup(\tilde{\xi}_2,\xi_2).
\end{equation}
We define the competing map $\tilde{u}$. We take
\[\tilde{u}(x,\cdot)=u(x,\cdot),\quad x\in(-\infty,\xi_1).\]
In $[\xi_1,\tilde{\xi}_1]\cup[\tilde{\xi}_2,\xi_2]$ we set
\[\tilde{u}(x,\cdot)=\hat{u}(x,\cdot),\]
with $\hat{q}=\hat{q}(x)$ and $\hat{h}=\hat{h}(x)$ defined as follows.
We set
\[\hat{q}(x)=q^0-q(x),\quad x\in[\xi_1,\tilde{\xi}_1]\cup[\tilde{\xi}_2,\xi_2].\]
Note that (\ref{xtildex-interv}) implies that $\hat{q}$ extends continuously $q$ at $x=\xi_1$ and $x=\xi_2$ and moreover that
\[\hat{q}(x)\in[0,\frac{q^0}{2}],\quad\text{ for }\;\;x\in[\xi_1,\tilde{\xi}_1]\cup[\tilde{\xi}_2,\xi_2].\]
In the interval $[\xi_1,\tilde{\xi}_1]$ we let $\hat{h}$ be the solution of (\ref{hat-hprime}) with initial condition $\hat{h}(\xi_1)=h(\xi_1)$. In the interval $[\tilde{\xi}_2,\xi_2]$ again we take the solution of (\ref{hat-hprime}) with initial condition $\hat{h}(\tilde{\xi}_2)=\hat{h}(\tilde{\xi}_1)$. It remains to specify $\tilde{u}(x,\cdot)$ for $x\in(\tilde{\xi}_1,\tilde{\xi}_2)\cup[\xi_2,+\infty)$. We take
\[
\begin{split}
&\tilde{u}(x,\cdot)=\bar{u}(\cdot-\hat{h}(\tilde{\xi}_1)),\quad x\in(\tilde{\xi}_1,\tilde{\xi}_2),\\
&\tilde{u}(x,\cdot)=u\Big(x,\cdot-(\hat{h}(\xi_2)-h(\xi_2))(1-\frac{x-\xi_2}{l})\Big),\quad x\in[\xi_2,\xi_2+l],\\
&\tilde{u}(x,\cdot)=u(x,\cdot),\quad x\in(\xi_2+l,+\infty).
\end{split}
\]
 With these definitions, one checks that $x\rightarrow\tilde{u}(x)$ is continuous and piece-wise smooth and coincides with $u(x,\cdot)$ outside $(\xi_1,\xi_2+l)$. Arguing as in the proof of Lemma \ref{NoMax} we show that
 \begin{equation}\label{jv-1111}
\mathcal{J}_{(\xi_2,\xi_2+l)\times\R}(\tilde{u})
-\mathcal{J}_{(\xi_2,\xi_2+l)\times\R}(u)\leq C\frac{(\hat{h}(\xi_2)-h(\xi_2))^2}{l},
\end{equation}
and
 \[
 \begin{split}
 &\mathcal{J}_{(\xi_1,\tilde{\xi}_1)\cup(\tilde{\xi}_2,\xi_2)\times\R}(\tilde{u})
 <\mathcal{J}_{(\xi_1,\tilde{\xi}_1)\cup(\tilde{\xi}_2,\xi_2)\times\R}(u),\\
 &c_0(\tilde{\xi}_2-\tilde{\xi}_1)
 =\mathcal{J}_{[\tilde{\xi}_1,\tilde{\xi}_2]\times\R}(\tilde{u})\leq\mathcal{J}_{[\tilde{\xi}_1,\tilde{\xi}_2]\times\R}(u).
 \end{split}
 \]
 Therefore, for $l>0$ large, we obtain
 \[\mathcal{J}_{(\xi_1,\xi_2+l)\times\R}(\tilde{u})
 <\mathcal{J}_{(\xi_1,\xi_2+l)\times\R}(u).\]
This contradicts the minimality of $u$ and concludes the proof of Theorem \ref{main}.

\section{The proof of Theorem \ref{main-0}}\label{DUE}
The proof is elementary, we sketch the argument.
\vskip.2cm
\noindent
Fix $a\in\{a_1,\ldots,a_N\}$ and, for $x_0\in\R$ and $l>1$, define $v_{x_0,l}:\R\rightarrow\R^m$ by setting
\begin{eqnarray*}
v_{x_0,l}(x):=\left\{\begin{array}{l}
u,\hskip1cm\text{ for }\;x\in(-\infty,x_0-l]\cup[x_0+l,+\infty),\\
a+(1-x+x_0-l)(u(x_0-l)-a),\\
\hskip1.2cm\;\text{ for }\;x\in(x_0-l,x_0-l+1],\\
a,\hskip1cm\text{ for }\;x\in(x_0-l+1,x_0+l-1),\\
a+(1+x-x_0-l)(u(x_0+l)-a),\\
\hskip1.2cm\;\text{ for }\;x\in[x_0+l-1,x_0+l).
\end{array}\right.
\end{eqnarray*}
Since $u$ and $u_x$ are bounded, there is a constant $\bar{J}>0$ such that
\begin{equation}\label{up}
{J}_{I_l(x_0)}(v_{x_0,l})<\bar{J},\;\text{ for all }\;x_0\in\R,\;l>1,
\end{equation}
where $I_l(x_0)=(x_0-l,x_0+l)$.
On the other hand, from (\ref{second-derW}), $\min_j\vert u(x)-a_j\vert\geq r$ for some $r\in(0,r_0]$ implies the existence of $w_r>0$ such that $W(u(x))>w_r$. Thus
\begin{equation}\label{down}
\min_j\vert u(x)-a_j\vert\geq r,\;\text{ for }\;x\in I_l(x_0)\;\;\Rightarrow\;\;{J}_{I_l(x_0)}(u)\geq 2l w_r.
\end{equation}
From (\ref{up}) and (\ref{down}) it follows that each ball $I_{l_r}(x_0)$ of radius $l_r=\bar{J}/{2 w_r}+1$ contains a point where $\min_j\vert u(x)-a_j\vert< r$.
Therefore if we consider the intervals $[2 k l_r, 2(k+1)l_r],\;\;k=0,1,\dots$ we have a sequence $\{x_k\},\;\; x_k\in(2 k l_r, 2(k+1)l_r)$ and a corresponding sequence $\{a_{j_k}\},\;a_{j_k}\in\{a_1,\dots,a_N\}$ with the property that $\vert u(x_k)-a_{j_k}\vert< r$. Since $W$ has a finite number of minima there is $a_+\in\{a_1,\dots,a_N\}$ and a subsequence $\{x_{k_i}\}$ such that
\[\vert u(x_{k_i})-a_+\vert\leq r,\;\;i=1,\dots.\]
This and the Cut-Off Lemma in \cite{af3}, if $r\in(0,\frac{r_0}{2}]$, imply
\[\vert u(x)-a_+\vert\leq r,\;\;\text{ for }\;\;x\geq x_{k_1}.\]
Since a similar statement hold for each $r\in(0,\frac{r_0}{2}]$ we conclude that
\[\lim_{x\rightarrow +\infty}u(x)=a_+\]
and a standard argument shows that actually the convergence is exponential.
The same argument shows that
\[\vert u(x)-a_-\vert\leq K e^{k x},\;\;x\leq 0,\]
for some $a_-\in\{a_1,\dots,a_N\}$.

If $a_-=a_+=a$ we have from $\lim_{x\rightarrow\pm\infty}u(x)=a$ and the definition of $v_{x_0,l}$ that $\lim_{l\rightarrow+\infty}{J}_{I_l(x_0)}(v_{x_0,l})=0$ while, in contradiction with the minimality of $u$, if $u\not\equiv a$ we have ${J}_{I_l(x_0)}(u)\geq\bar{J}^\prime$ for some $\bar{J}^\prime>0$ for all $l>1$ sufficiently large. This concludes the proof.

\section{A new approach to the existence of connections between global minima of the effective potential}\label{schatzman}

We develop in detail the approach sketched in the Introduction. We show in Lemma \ref{uleta-exists} and in Lemma \ref{ul-exists} that the minimizers in problems (\ref{P}) and (\ref{condition}) exist. In preparation to this we construct a map $\tilde{u}^{L,\eta}\in\mathcal{A}_{L,\eta}$ with finite energy and prove in Lemma \ref{near-a} that we can restrict to a subset of maps of $\mathcal{A}_{L,\eta}$ with a well controlled behavior for $y\rightarrow\pm\infty$.
\subsection{Existence of the minimizers $u^{L,\eta}$ and $u^L$.}\label{section31}
We start by showing that, in the minimization problem (\ref{P}), we can restrict to the subset of maps ${\rm{u}}\in\mathcal{A}_{L,\eta}$ that satisfy
\begin{equation}\label{uni-bound}
\|{\rm{u}}\|_{L^\infty(\mathcal{R}_L;\R^m)}\leq M,
\end{equation}
where $M>0$ is the constant in (\ref{M}).
 Indeed,
given ${\rm{u}}\in\mathcal{A}_{L,\eta}$, set ${\rm{u}}_M=0$ if ${\rm{u}}=0$ and ${\rm{u}}_M=\min\{\vert {\rm{u}}\vert, M\}{\rm{u}}/\vert {\rm{u}}\vert$ otherwise and note that (\ref{M}) implies
\[W({\rm{u}}_M)\leq W({\rm{u}}),\;\;\text{a.e.}\]
while we have
\[\vert\nabla {\rm{u}}_M\vert\leq\vert\nabla {\rm{u}}\vert,\;\;\text{a.e.}\]
 since the mapping ${\rm{u}}\rightarrow {\rm{u}}_M$ is a projection.
It follows
\[\mathcal{J}({\rm{u}})-\mathcal{J}({\rm{u}}_M)=\int_{\vert {\rm{u}}\vert\leq M}\Big(W({\rm{u}})-W({\rm{u}}_M)
+\frac{1}{2}(\vert\nabla {\rm{u}}\vert^2-\vert\nabla {\rm{u}}_M\vert^2)\Big)\geq 0.\]
that proves the claim.

We now show that $\mathcal{A}_{L,\eta}$ contains maps with finite energy.
 As before we set $c_0=J_\R(\bar{u}_\pm)$.

\begin{lemma}\label{B}
There exist $\tilde{u}^{L,\eta}\in\mathcal{A}_{L,\eta}$ and $C_0>0$ such that
\begin{equation}\label{B-est}
\mathcal{J}(\tilde{u}^{L,\eta})\leq C_0(1+\vert\eta\vert)+c_0L.
\end{equation}
\end{lemma}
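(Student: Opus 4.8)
The plan is to produce an explicit finite-energy competitor. Since the two boundary profiles $\bar{u}_-$ and $\bar{u}_+(\cdot-\eta)$ both lie in $\bar{u}_1+W^{1,2}(\R;\R^m)$ and differ in $L^2$ by a bounded amount, I would connect them by the straight-line interpolation in $x$ carried out over a strip of \emph{unit} width, and then keep the profile frozen:
\[
\tilde{u}^{L,\eta}(x,y):=
\begin{cases}
(1-x)\,\bar{u}_-(y)+x\,\bar{u}_+(y-\eta), & (x,y)\in(0,1)\times\R,\\
\bar{u}_+(y-\eta), & (x,y)\in[1,L)\times\R.
\end{cases}
\]
The two pieces agree at $x=1$ and are smooth, hence $\tilde{u}^{L,\eta}\in W^{1,2}_{\mathrm{loc}}(\mathcal{R}_L;\R^m)$, and from the formula $\tilde{u}^{L,\eta}(0,\cdot)=\bar{u}_-$, $\tilde{u}^{L,\eta}(L,\cdot)=\bar{u}_+(\cdot-\eta)$, so $\tilde{u}^{L,\eta}\in\mathcal{A}_{L,\eta}$. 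On $[1,L)\times\R$ the map does not depend on $x$ and is a translate of $\bar{u}_+$, so by translation invariance of the one-dimensional energy $\mathcal{J}_{(1,L)\times\R}(\tilde{u}^{L,\eta})=(L-1)J_\R(\bar{u}_+)=(L-1)c_0\le c_0L$, with no $\eta$-dependence.

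It then remains to bound $\mathcal{J}_{(0,1)\times\R}(\tilde{u}^{L,\eta})$ by $C_0(1+|\eta|)$ with $C_0$ independent of $L$ and $\eta$. I would split the $y$-axis into the window $I_\eta:=[\min\{0,\eta\}-C_1,\ \max\{0,\eta\}+C_1]$, of length $|\eta|+2C_1$, with $C_1>0$ fixed so that $\bar{K}e^{-\bar{k}C_1}\le r_0$ (possible by (\ref{n-bound}), with $r_0$ from (\ref{second-derW})), and its complement. For $y\notin I_\eta$ one has $\sigma(y)=\sigma(y-\eta)$, so $\bar{u}_-(y)$ and $\bar{u}_+(y-\eta)$, and hence their convex combination $\tilde{u}^{L,\eta}(x,y)$, are within $r_0$ of the same well $a_{\sigma(y)}$ and in fact satisfy $|\tilde{u}^{L,\eta}(x,y)-a_{\sigma(y)}|\le\bar{K}e^{-\bar{k}\,\mathrm{dist}(y,I_\eta)}$ uniformly in $x\in(0,1)$; combining this with the bound $W(a_\pm+z)\le\frac12\Gamma^2|z|^2$ for $|z|\le r_0$ from (\ref{second-derW}) and with the exponential estimates on $\bar{u}_\pm'$ in (\ref{n-bound}), one gets $W(\tilde{u}^{L,\eta})+\frac12|\nabla\tilde{u}^{L,\eta}|^2\le Ce^{-2\bar{k}\,\mathrm{dist}(y,I_\eta)}$ on $(0,1)\times(\R\setminus I_\eta)$, whose integral is a finite constant. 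On $(0,1)\times I_\eta$ I would simply use that $\tilde{u}^{L,\eta}$ is bounded by $\max\{\|\bar{u}_-\|_{L^\infty},\|\bar{u}_+\|_{L^\infty}\}$ and that $\partial_x\tilde{u}^{L,\eta}=\bar{u}_+(\cdot-\eta)-\bar{u}_-$ and $\partial_y\tilde{u}^{L,\eta}=(1-x)\bar{u}_-'+x\,\bar{u}_+'(\cdot-\eta)$ are bounded by a constant, so the integrand is $\le C$ there and its integral over $(0,1)\times I_\eta$ is at most $C(|\eta|+2C_1)$. Adding the two pieces yields $\mathcal{J}_{(0,1)\times\R}(\tilde{u}^{L,\eta})\le C_0(1+|\eta|)$, and combining with the estimate on $(1,L)\times\R$ gives (\ref{B-est}).

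The only genuinely delicate point is to keep the overhead \emph{linear} in $|\eta|$ and \emph{uniform} in $L$. Spreading the transition over the whole strip $(0,L)$ would make the $O(|\eta|)$-wide region where the two profiles disagree contribute $\sim L|\eta|$ to $\int W$; and realizing the shift $\eta$ by sliding along the manifold of translates of $\bar{u}_+$, over an $x$-interval of width $w\le L$, costs a kinetic term $\sim\|\bar{u}_+'\|^2\eta^2/w$, which is not $O(|\eta|)$ when $L$ stays bounded. Confining the linear interpolation to a strip of fixed width $1$ turns the $O(|\eta|)$-wide profile mismatch into an integrand of bounded height over a bounded $x$-range --- exactly the $O(|\eta|)$ term --- while everything away from the window is controlled by the exponential convergence (\ref{n-bound}); I expect no other obstruction.
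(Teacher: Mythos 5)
Your proposal is correct and is essentially the paper's own proof: the same explicit competitor (linear interpolation over a unit-width strip, then frozen at $\bar{u}_+(\cdot-\eta)$), the same splitting of the $y$-axis into a window of length $O(|\eta|)$ controlled by uniform bounds and exponentially decaying tails controlled by (\ref{n-bound}). The only cosmetic difference is that you bound $W$ near the wells by the quadratic estimate from (\ref{second-derW}) (hence the $C_1$-offset to stay within $r_0$), while the paper uses the cruder first-order bound $W(a+z)\le C'|z|$ with $C'=\max_{|z|\le M}|W_u(a+z)|$, which avoids the offset; the conclusion is identical.
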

\begin{proof}
For $L>1$ and $\eta\in\R$, define $\tilde{u}^{L,\eta}:[0,L]\times\R$ by setting
\begin{equation}\label{uleta-def}
\tilde{u}^{L,\eta}(x,y)=\left\{\begin{array}{l}
(1-x)\bar{u}_-(y)+x\bar{u}_+(y-\eta),\quad\text{ for }\;\;(x,y)\in[0,1]\times\R,\\
\bar{u}_+(y-\eta),\quad\text{ for }\;\;(x,y)\in(1,L]\times\R.
\end{array}\right.
\end{equation}
Since, as minimizers of (\ref{barmin}) $\bar{u}_\pm$ are also bounded by $M$, from (\ref{uleta-def}) we have
\begin{equation}\label{tildeu-M}
\|\tilde{u}^{L,\eta}\|_{L^\infty(\mathcal{R}_L;\R^m)}\leq M.
\end{equation}
We prove the estimate (\ref{B-est}) only for $\eta\geq 0$. The same argument applies to the case $\eta<0$.
From (\ref{uleta-def}), for $(x,y)\in[0,1]\times\R$, it follows
\begin{equation}\label{uleta-def-der}
\begin{split}
&\tilde{u}_x^{L,\eta}(x,y)=-
\bar{u}_-(y)+\bar{u}_+(y-\eta)\\
&=-
(\bar{u}_-(y)-a)+(\bar{u}_+(y-\eta)-a),\\
&\tilde{u}_y^{L,\eta}(x,y)=(1-x)\bar{u}_-^\prime(y)+x\bar{u}_+^\prime(y-\eta),
\end{split}
\end{equation}
and therefore, using also (\ref{n-bound}), we have
\begin{equation}\label{uleta-def-der0}
\begin{split}
&\vert\tilde{u}_x^{L,\eta}(x,y)\vert\leq 2M,\\
&\vert\tilde{u}_y^{L,\eta}(x,y)\vert\leq\bar{K},
\end{split}
\end{equation}
From (\ref{uleta-def-der0}) and (\ref{tildeu-M}) we obtain
\begin{equation}\label{in 0eta}
\mathcal{J}_{[0,1]\times[0,\eta]}(\tilde{u}_x^{L,\eta})
=\int_0^\eta\int_0^1\Big(W(\tilde{u}^{L,\eta})+\frac{1}{2}(\vert\tilde{u}_x^{L,\eta}\vert^2
+\vert\tilde{u}_y^{L,\eta}\vert^2)\Big){d} x{d} y\leq\eta C,
\end{equation}
for some $C>0$ independent of $L,\eta$.
From (\ref{uleta-def-der}) and (\ref{n-bound}) for $y\geq\eta$,
 we have
\begin{equation}\label{uleta-def-der1}
\begin{split}
&\int_0^1\vert\tilde{u}_x^{L,\eta}(x,y)\vert^2 {d} x\leq 2(\vert\bar{u}_-(y)-a\vert^2+\vert\bar{u}_+(y-\eta)-a\vert^2)\\
&\leq2{\bar{K}}^2(e^{-2\bar{k}y}+e^{-2\bar{k}(y-\eta)}),\\
&\int_0^1\vert\tilde{u}_y^{L,\eta}(x,y)\vert^2 dx\leq2\int_0^1((1-x)^2\vert\bar{u}_-^\prime(y)\vert^2+x^2\vert\bar{u}_+^\prime(y-\eta)\vert^2){d} x\\
&\leq \vert\bar{u}_-^\prime(y)\vert^2+\vert\bar{u}_+^\prime(y-\eta)\vert^2\leq {\bar{K}}^2(e^{-2\bar{k}y}+e^{-2\bar{k}(y-\eta)}).
\end{split}
\end{equation}
On the other hand, recalling also that $W(a)=0$, we have
\begin{equation}\label{uleta-pot}
\begin{split}
&W(\tilde{u}^{L,\eta}(x,y))=W\Big((1-x)\bar{u}_-(y)+x(\bar{u}_+(y-\eta)\Big)\\
&W\Big((1-x)(\bar{u}_-(y)-a)+x(\bar{u}_+(y-\eta)-a)+a\Big)\\
&\leq C^\prime\Big((1-x)\vert\bar{u}_-(y)-a\vert+x\vert\bar{u}_+(y-\eta)-a\vert\Big),
\end{split}
\end{equation}
with $C^\prime:=\max_{\vert z\vert\leq M}\vert W_u(a+z)\vert$.
It follows, for $y\geq\eta\geq 0$,
\begin{equation}\label{uleta-pot1}
\begin{split}
&\int_0^1W(\tilde{u}^{L,\eta}(x,y)){d} x\\
&\leq C^\prime\int_0^1\Big((1-x)\vert\bar{u}_-(y)-a\vert+x\vert\bar{u}_+(y-\eta)-a\vert\Big){d} x\\
&\leq C^\prime\bar{K}(e^{-\bar{k}y}+e^{-\bar{k}(y-\eta)}).
\end{split}
\end{equation}
 From (\ref{uleta-def-der1}) and (\ref{uleta-pot1}) we obtain
\begin{equation}\label{uleta-energy+}
\mathcal{J}_{[0,1]\times[\eta,+\infty)}(\tilde{u}^{L,\eta})\leq C_1,\quad\text{ for }\;\;\eta\geq 0
\end{equation}
where $C_1=C_1(\bar{k},\bar{K},C^\prime)$.
Similarly we have
\begin{equation}\label{uleta-energy-}
\mathcal{J}_{[0,1]\times(-\infty,0]}(\tilde{u}^{L,\eta})\leq C_1,\quad\text{ for }\;\;\eta\geq 0.
\end{equation}
These estimates, (\ref{in 0eta}) and the analogous estimates valid for $\eta<0$ imply
\[\mathcal{J}_{[0,L]\times\R}(\tilde{u}^{L,\eta})\leq 2C_1+C\vert\eta\vert+c_0(L-1)\leq C_0(1+\vert\eta\vert)+c_0L,\quad\text{ for }\;\;\eta\geq 0\]
with $C_0=\max\{2C_1,C\}$. The proof is complete.
\end{proof}
Lemma \ref{B} implies that, in the minimization problem (\ref{P}), we can restrict to the subset of $\mathcal{A}_{L,\eta}$ of the maps ${\rm{u}}$ that satisfy
\begin{equation}\label{less-energy-tilde}
\mathcal{J}({\rm{u}})\leq\mathcal{J}(\tilde{u}^{L,\eta}).
\end{equation}

\vskip.2cm
Next we show that we can further restrict $\mathcal{A}_{L,\eta}$ to the set of maps that converges uniformly to $a_\pm$ as $y\rightarrow\pm\infty$.

\begin{lemma}\label{near-a}
In the minimization problem (\ref{P}), the admissible set
$\mathcal{A}_{L,\eta}$ can be restricted to the subset of the maps ${\rm{u}}\in\mathcal{A}_{L,\eta}$ that satisfy (\ref{less-energy-tilde}), (\ref{uni-bound})
and
\begin{equation}\label{near-a1}
\begin{split}
&\vert {\rm{u}}(x,y)-a_+\vert\leq\frac{C_{L,\eta}}{\sqrt{y}},\quad\text{ for }\;\;y\geq y_{L,\eta},\\
&\vert {\rm{u}}(x,y)-a_-\vert\leq\frac{C_{L,\eta}}{\sqrt{-y}},\quad\text{ for }\;\;y\leq-y_{L,\eta}
\end{split}
\end{equation}
for some constants $C_{L,\eta}>0$, $y_{L,\eta}>0$.
\end{lemma}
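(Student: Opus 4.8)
The plan is to reduce everything to the Cut-Off Lemma of \cite{af3} (Section~2.2), using the a priori energy bound of Lemma~\ref{B}. By the two reductions just made, it is enough to show that every $\mathrm{u}\in\mathcal{A}_{L,\eta}$ with $\|\mathrm{u}\|_{L^\infty(\mathcal{R}_L;\R^m)}\leq M$ and $\mathcal{J}(\mathrm{u})\leq\mathcal{J}(\tilde{u}^{L,\eta})=:E_0$ (so $E_0\leq C_0(1+\vert\eta\vert)+c_0L$ depends only on $L,\eta$) can be replaced, without raising the energy, by a map of $\mathcal{A}_{L,\eta}$ that also satisfies (\ref{near-a1}).

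First I would extract two \emph{good slices}. Since $\int_{\mathcal{R}_L}\vert\nabla\mathrm{u}\vert^2\leq 2E_0$, for any large $T>0$ there is a height $y^+\in[T,2T]$ at which $x\mapsto\mathrm{u}(x,y^+)$ is absolutely continuous and $\int_0^L\vert\partial_x\mathrm{u}(x,y^+)\vert^2{d} x\leq 2E_0/T$. As $\mathrm{u}(0,y^+)=\bar{u}_-(y^+)$ and $\vert\bar{u}_-(y^+)-a_+\vert\leq\bar{K}e^{-\bar{k}y^+}$ by (\ref{n-bound}), Cauchy--Schwarz yields
\[\|\mathrm{u}(\cdot,y^+)-a_+\|_{L^\infty((0,L);\R^m)}\leq\bar{K}e^{-\bar{k}y^+}+\big(2LE_0/T\big)^{1/2}.\]
Taking $T=y_{L,\eta}$ large, depending only on $L,\eta,\bar{k},\bar{K},r_0$, makes this right-hand side $\leq r_0/2$ and also $\leq C_{L,\eta}/\sqrt{y^+}$ with $C_{L,\eta}$ depending only on $L,\eta$ (here $y^+\leq 2y_{L,\eta}$ is used). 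Symmetrically one obtains $y^-\in[-2y_{L,\eta},-y_{L,\eta}]$ with the analogous bound near $a_-$. What matters is that the window $[y_{L,\eta},2y_{L,\eta}]$ containing the slices is controlled \emph{only} by $E_0$, hence only by $L$ and $\eta$.

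Then I would truncate beyond $y^\pm$. Put $\hat{\mathrm{u}}=\mathrm{u}$ on $[y^-,y^+]\times(0,L)$; for $y\geq y^+$ let $\hat{\mathrm{u}}$ interpolate linearly in $y$ between $\mathrm{u}(\cdot,y^+)$ and $\Phi(\cdot,y^++1)$ on $[y^+,y^++1]$, with $\hat{\mathrm{u}}=\Phi$ for $y\geq y^++1$, where $\Phi(x,y):=a_++(1-\frac{x}{L})(\bar{u}_-(y)-a_+)+\frac{x}{L}(\bar{u}_+(y-\eta)-a_+)$; proceed symmetrically below $y^-$. Since $\Phi(0,\cdot)=\bar{u}_-$ and $\Phi(L,\cdot)=\bar{u}_+(\cdot-\eta)$, one has $\hat{\mathrm{u}}\in\mathcal{A}_{L,\eta}$; on $\{y>y^+\}\cup\{y<y^-\}$ the map $\hat{\mathrm{u}}$ stays within $C_{L,\eta}/\sqrt{\vert y\vert}$ of $a_{\sigma(y)}$ — exponentially once $\hat{\mathrm{u}}=\Phi$, by (\ref{n-bound}), and by the slice estimate on the two interpolation layers $[y^+,y^++1]$, $[y^--1,y^-]$ — so (\ref{near-a1}) holds with $\mathrm{u}$-independent constants. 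On these layers $\hat{\mathrm{u}}$ stays in the $r_0$-ball about $a_\pm$, where $W(\hat{\mathrm{u}})\leq\frac{1}{2}\Gamma^2\vert\hat{\mathrm{u}}-a_\pm\vert^2$ by (\ref{second-derW}); estimating $\vert\nabla\hat{\mathrm{u}}\vert^2$ by the small slice quantities and the exponentially small derivatives of $\Phi$ shows the energy of $\hat{\mathrm{u}}$ over $\{y>y^+\}\cup\{y<y^-\}$ can be made as small as desired by enlarging $y_{L,\eta}$, and the Cut-Off Lemma of \cite{af3} turns this into the clean inequality $\mathcal{J}(\hat{\mathrm{u}})\leq\mathcal{J}(\mathrm{u})$. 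Together with the fact that $\tilde{u}^{L,\eta}$ itself satisfies (\ref{near-a1}), this proves the lemma.

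The hard part is keeping $C_{L,\eta}$ and $y_{L,\eta}$ in (\ref{near-a1}) independent of the competitor $\mathrm{u}$; this is precisely what Lemma~\ref{B} buys us, since the uniform bound $E_0$ confines the cut-off heights to a fixed window and makes the truncated tails explicit maps decaying with $L,\eta$-dependent but $\mathrm{u}$-independent rates.
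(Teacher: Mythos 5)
Your extraction of good slices $y^\pm$ by Fubini and a mean-value argument from the a priori bound $\int_{\mathcal{R}_L}\vert\nabla\mathrm{u}\vert^2\leq 2E_0$ is a legitimate alternative to the paper's measure bound $\vert Y_r\vert\leq C'_{L,\eta}/r^2$; both isolate heights where $\mathrm{u}$ is uniformly close to $a_\pm$, with windows controlled only by $L,\eta$. The gap is in the truncation step and the claimed energy comparison.

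You replace $\mathrm{u}$ beyond the slices by an \emph{explicitly chosen} map (linear interpolation to $\Phi$, then $\Phi$) and then assert that smallness of $\mathcal{J}_{\{y>y^+\}\cup\{y<y^-\}}(\hat{\mathrm{u}})$, together with the Cut-Off Lemma, yields $\mathcal{J}(\hat{\mathrm{u}})\leq\mathcal{J}(\mathrm{u})$. This does not follow. A small tail energy for $\hat{\mathrm{u}}$ tells you nothing about the tail energy of $\mathrm{u}$, which could be arbitrarily small (even zero, if $\mathrm{u}\equiv a_\pm$ there); in that case your replacement strictly \emph{increases} the energy, and restricting the admissible class to such $\hat{\mathrm{u}}$'s could raise the infimum. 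The Cut-Off Lemma of \cite{af3} cannot be invoked to close this: it is not a device that certifies an energy bound for a competitor you hand it. It \emph{produces its own} pointwise modification of $\mathrm{u}$ inside a bounded rectangle on whose boundary $\vert\mathrm{u}-a_+\vert\leq r/2$, and it guarantees that \emph{that} modification does not increase the energy. If you invoke it, you must use the map it gives you and discard $\Phi$.

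This is exactly what the paper does: from $\vert Y_r\vert\leq C'_{L,\eta}/r^2$ it extracts an increasing sequence $y_{r,j}\uparrow\infty$ of heights avoiding $Y_r$, with $y_{r,1}$ bounded uniformly in $\mathrm{u}$, and applies the Cut-Off Lemma on each bounded rectangle $(0,L)\times(y_{r,j},y_{r,j+1})$; the resulting $\tilde{\mathrm{u}}$ coincides with $\mathrm{u}$ for $y\leq y_{r,1}$, satisfies $\vert\tilde{\mathrm{u}}-a_+\vert\leq r/2$ for $y\geq y_{r,1}$, and has $\mathcal{J}(\tilde{\mathrm{u}})\leq\mathcal{J}(\mathrm{u})$. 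The $1/\sqrt{y}$ rate in (\ref{near-a1}) then comes from optimizing $r$ against the $\mathrm{u}$-independent bound $y_{r,1}\lesssim C'_{L,\eta}/r^2$. Your argument can be repaired by using your Fubini slices (one per dyadic window $[2^kT,2^{k+1}T]$) as the $y_{r,j}$'s and handing the modification entirely to the Cut-Off Lemma; as written, the comparison $\mathcal{J}(\hat{\mathrm{u}})\leq\mathcal{J}(\mathrm{u})$ is unjustified.
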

\begin{proof}
From (\ref{n-bound}) we have
\begin{equation}\label{boundary-v-samll}\begin{split}
&\vert\bar{u}_-(y)-a_+\vert\leq\frac{r}{4},\quad\text{ for }\;\;y\geq y_r,\\
&\vert\bar{u}_+(y-\eta)-a_+\vert\leq\frac{r}{4},\quad\text{ for }\;\;y\geq y_r+\eta
\end{split}
\end{equation}
with $y_r=\frac{1}{\bar{k}}\ln{\frac{4\bar{K}}{r}}$.
Assume now $r\in(0,r_0]$, $r_0$ the constant in (\ref{second-derW}), and
define
\[Y_r:=\{y\geq y_r+\max\{0,\eta\}:\vert {\rm{u}}(x_y,y)-a_+\vert\geq\frac{r}{2},\;\;\text{ for some }\;\; x_y\in(0,L)\}.\]
Then, for $y\in Y_r$, we have
\[\vert {\rm{u}}(x_y,y)-\bar{u}_-(y)\vert\geq\vert {\rm{u}}(x_y,y)-a_+\vert-\vert\bar{u}_-(y)-a_+\vert\geq\frac{r}{4}.\]
It follows, recalling also the boundary condition ${\rm{u}}(0,\cdot)=\bar{u}_-$
\[\frac{r}{4}\leq\vert {\rm{u}}(x_y,y)-\bar{u}_-(y)\vert=\vert {\rm{u}}(x_y,y)-{\rm{u}}(0,y)\vert\leq L^\frac{1}{2}(\int_0^L\vert {\rm{u}}_x(x,y)\vert^2{d} x)^\frac{1}{2},\]
and therefore
\[\vert Y_r\vert\frac{r^2}{16 L}\leq\int_0^L\int_{Y_r}\vert{\rm{u}}_x(x,y)\vert^2{d} x{d} y\leq 2\mathcal{J}(\tilde{u}^{L,\eta})\]
that is
\begin{equation}\label{Y-measure}
\vert Y_r\vert\leq\frac{C_{L,\eta}^\prime}{r^2},\quad\text{ with }\;\;C_{L,\eta}^\prime=32L\mathcal{J}(\tilde{u}^{L,\eta}).
\end{equation}
It follows that there is an incresing sequence $y_{r,j},\;j=1,\ldots$ that diverges to $+\infty$ and is such that
\begin{equation}\label{yrj}
\begin{split}
&y_{r,1}\leq y_r+\max\{0,\eta\}+\frac{C_{L,\eta}^\prime}{r^2},\\
&y_{r,j}\in\R\setminus Y_r.
\end{split}
\end{equation}
This and (\ref{boundary-v-samll}) imply that
\[\vert {\rm{u}}(x,y)-a_+\vert\leq\frac{r}{2},\quad\text{ on }\;\;\partial R_j\]
where $R_j=(0,L)\times(y_{r,j},y_{r,j+1})$, $j=1,\ldots$

\noindent
We can then invoke the Cut-Off Lemma in \cite{af3} and conclude the existence of a map $\tilde{{\rm{u}}}$ that coincides with ${\rm{u}}$ for $y\leq y_{r,1}$ and satisfies
\begin{equation}\label{yrj-1}
\vert\tilde{{\rm{u}}}(x,y)-a_+\vert\leq\frac{r}{2},\quad\text{ for }\;\;x\in[0,L],\;y\geq y_{r,1}
\end{equation}
and
\[\mathcal{J}(\tilde{{\rm{u}}})\leq\mathcal{J}({\rm{u}}),\]
with strict inequality whenever $\vert Y_r\vert>0$. Therefore in the minimization problem (\ref{P}) we are allowed to suppose that ${\rm{u}}\in\mathcal{A}_{L,\eta}$ satisfies (\ref{yrj-1}).
By increasing the value of $C_{L,\eta}^\prime$ if necessary we can assume that
\[y_r+\max\{0,\eta\}\leq\frac{C_{L,\eta}^\prime}{r^2},\quad\text{ for }\;\;r\in(0,r_0].\]
Then $y=\frac{2C_{L,\eta}^\prime}{r^2}$ implies $y\geq y_{r,1}$ and therefore from the assumption that ${\rm{u}}$ satisfies (\ref{yrj-1}) it follows
\begin{equation}\label{yrj-2}
\vert {\rm{u}}(x,y)-a\vert\leq\sqrt{\frac{C_{L,\eta}^\prime}{2 y}},\quad\text{ for }\;\;y\geq\frac{2C_{L,\eta}^\prime}{r_0^2}.
\end{equation}
This proves (\ref{near-a1})$_1$ with $C_{L,\eta}=\sqrt{\frac{C_{L,\eta}^\prime}{2}}$ and $y_{L,\eta}=\frac{2C_{L,\eta}^\prime}{r_0^2}$. The other inequality is proved in a similar way.
\end{proof}
We are now in the position to prove the existence of the minimizers $u^{L,\eta}$ and $u^L$ of problems (\ref{P}) and (\ref{condition}).
\begin{lemma}\label{uleta-exists}
There exists $u^{L,\eta}\in\mathcal{A}_{L,\eta}$ that solves problem (\ref{P}):
\[\mathcal{J}(u^{L,\eta})=\min_{{\rm{u}}\in\mathcal{A}_{L,\eta}}\mathcal{J}({\rm{u}}).\]
Moreover $u^{L,\eta}$ satisfies (\ref{uni-bound}) and (\ref{near-a1}).
\end{lemma}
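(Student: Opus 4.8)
The plan is to use the direct method of the calculus of variations, with the tail estimate of Lemma \ref{near-a} supplying the compactness that the unbounded strip $\mathcal{R}_L$ would otherwise destroy. By Lemma \ref{B} and the two reductions preceding and following it, in problem (\ref{P}) we may replace $\mathcal{A}_{L,\eta}$ by its subset of maps ${\rm u}$ satisfying the energy bound (\ref{less-energy-tilde}), the $L^\infty$ bound (\ref{uni-bound}), and the decay (\ref{near-a1}); crucially the constants $C_{L,\eta},y_{L,\eta}$ there depend only on $L,\eta$ (through $\mathcal{J}(\tilde u^{L,\eta})$) and the fixed data, not on the map. This subset is nonempty (the cut-off of $\tilde u^{L,\eta}$ lies in it) and has the same infimum, say $m_{L,\eta}<+\infty$, as (\ref{P}). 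I would then fix a minimizing sequence $u_j$ in this subset, $\mathcal{J}(u_j)\to m_{L,\eta}$.

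First I would extract a limit. Since $W\ge 0$, the bound $\mathcal{J}(u_j)\le C$ gives $\int_{\mathcal{R}_L}|\nabla u_j|^2\le C$, and together with (\ref{uni-bound}) this yields a uniform $W^{1,2}$ bound of $u_j$ on every rectangle $\Omega_R:=(0,L)\times(-R,R)$. By a diagonal argument over $R=1,2,\dots$ and Rellich--Kondrachov, along a subsequence there is $u^{L,\eta}\in W^{1,2}_{\rm loc}(\mathcal{R}_L;\R^m)$ with $u_j\rightharpoonup u^{L,\eta}$ in $W^{1,2}(\Omega_R)$ for each $R$ and $u_j\to u^{L,\eta}$ a.e. in $\mathcal{R}_L$. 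The a.e. convergence preserves (\ref{uni-bound}) and (\ref{near-a1}) (with the same $L,\eta$-dependent constants), so $u^{L,\eta}$ satisfies both. Moreover the traces of $u_j$ on $\{x=0\}\times(-R,R)$ and $\{x=L\}\times(-R,R)$ are the fixed functions $\bar u_-$ and $\bar u_+(\cdot-\eta)$; since the trace operator on the Lipschitz rectangle $\Omega_R$ is weakly continuous, the traces of $u^{L,\eta}$ equal the same functions for every $R$, hence $u^{L,\eta}\in\mathcal{A}_{L,\eta}$.

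Next comes lower semicontinuity. On each $\Omega_R$, convexity of $\xi\mapsto\tfrac12|\xi|^2$ gives weak lower semicontinuity of the Dirichlet term, while $W(u_j)\to W(u^{L,\eta})$ a.e. with $W\ge 0$ allows Fatou's lemma; adding and using $\liminf a_j+\liminf b_j\le\liminf(a_j+b_j)$ and $\int_{\Omega_R}(\cdots)\le\mathcal{J}(u_j)$ yields
\[
\int_{\Omega_R}\Big(W(u^{L,\eta})+\tfrac12|\nabla u^{L,\eta}|^2\Big)\le\liminf_{j}\int_{\Omega_R}\Big(W(u_j)+\tfrac12|\nabla u_j|^2\Big)\le\liminf_{j}\mathcal{J}(u_j)=m_{L,\eta}.
\]
Letting $R\to+\infty$ and invoking monotone convergence on the left gives $\mathcal{J}(u^{L,\eta})\le m_{L,\eta}$, and since $u^{L,\eta}\in\mathcal{A}_{L,\eta}$ the reverse inequality is automatic; thus $u^{L,\eta}$ solves (\ref{P}) and, as noted, satisfies (\ref{uni-bound}) and (\ref{near-a1}).

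The only genuine obstacle is the unboundedness of $\mathcal{R}_L$ in the $y$ direction, which a priori lets energy of the minimizing sequence escape to $y=\pm\infty$ and could block both the exhaustion step in the lower-semicontinuity argument and the identification of the boundary behaviour of the limit. This is exactly what the uniform tail bound (\ref{near-a1}) of Lemma \ref{near-a} prevents, since its constants are uniform over the admissible class; once that is in hand the remainder is the textbook direct method.
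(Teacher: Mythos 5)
Your proof is correct and follows essentially the same route as the paper: reduce to the subclass of admissible maps satisfying the uniform $L^\infty$ bound and the tail decay from Lemma~\ref{near-a}, extract a subsequence converging weakly in $W^{1,2}_{\rm loc}$ and a.e., and conclude by weak lower semicontinuity of the Dirichlet term together with Fatou's lemma for the nonnegative potential term. You are somewhat more explicit than the paper about the trace-identification step and the exhaustion $R\to\infty$, but these are the same standard ingredients the paper invokes implicitly.
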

\begin{proof}
From Lemma \ref{B} we have
\begin{equation}\label{upper-lower}
0\leq\inf_{{\rm{u}}\in\mathcal{A}_{L,\eta}}\mathcal{J}({\rm{u}})\leq \mathcal{J}(\tilde{u}^{L,\eta})<+\infty.
\end{equation}
Let $\{{\rm{u}}_j\}_{j=1}^\infty\subset\mathcal{A}_{L,\eta}$ be a minimizing sequence. By Lemma \ref{near-a} and the discussion above we can assume that ${\rm{u}}_j$ satisfies (\ref{uni-bound}) and (\ref{near-a1}).
From (\ref{upper-lower}) we have
\[ \int_{\mathcal{R}_L}\frac{1}{2}\vert\nabla {\rm{u}}_j\vert^2 {d} x{d} y\leq\mathcal{J}({\rm{u}}_j)\leq\mathcal{J}
(\tilde{u}^{L,\eta}). \]
Hence, using also that $\|{\rm{u}}_j\|_{L^\infty(\mathcal{R}_L;\R^m)}\leq M$, by weak compactness we have that, possibly by passing to a subsequence,
\[  {\rm{u}}_j\rightharpoonup u, \text{ in } W^{1,2}_{\mathrm{loc}}(\mathcal{R}_L;\R^m),\]
for some $u\in W^{1,2}_{\mathrm{loc}}(\mathcal{R}_L;\R^m)$.

By compactness of the embedding we can assume that ${\rm{u}}_j\to u$ strongly in $L^{2}_{\text{loc}}(\mathcal{R}_L;\R^m)$ and therefore, along a further subsequence,
\begin{equation}\label{pointwise}
\lim_{j\to+\infty}{\rm{u}}_j(x,y)=u(x,y), \text{ a.e.\ in }\mathcal{R}_L.
\end{equation}
Weak lower semi-continuity of the $L^2$ norm gives
\[ \liminf_{j\to+\infty}\int_{\mathcal{R}_L}\frac{1}{2}\vert\nabla {\rm{u}}_j\vert^2 {d} x{d} y\geq\int_{\mathcal{R}_L}\frac{1}{2}\vert\nabla u\vert^2 {d} x{d} y,
\]
and by Fatou's lemma,
\[ \liminf_{j\to+\infty}\int_{\mathcal{R}_L} W({\rm{u}}_j) {d} x{d} y\geq\int_{\mathcal{R}_L} W(u) {d} x{d} y.\]
Moreover from (\ref{pointwise}) we have that $u$ satisfies (\ref{uni-bound}) and (\ref{near-a1}). It follows that we can identify the map $u$ with the sought minimizer $u^{L,\eta}$.
The proof is complete.
\end{proof}

The minimizer $u^{L,\eta}$ given by Lemma \ref{uleta-exists} satisfies the bound \[\|u^{L,\eta}\|_{L^\infty(\mathcal{R}_L;\R^m)}\leq M\]
 independently of $L,\eta$ therefore the smoothness of $W$ and of the boundary conditions $\bar{u}_-$ and $\bar{u}_+(\cdot-\eta)$ implies via elliptic regularity that
\begin{equation}\label{smoothness}
\|u^{L,\eta}\|_{C^{2,\gamma}(\mathcal{R}_L;\R^m)}\leq C,
\end{equation}
for some constant $C>0$ and $\gamma\in(0,1)$ independent of $L,\eta$.
\begin{lemma}\label{ul-exists}
There exists $\bar{\eta}\in\R$ and $u^L\in\mathcal{A}_{L,\bar{\eta}}$ such that
\[\mathcal{J}(u^L)=\min_\eta\mathcal{J}(u^{L,\eta})\leq C_0+c_0L.\]
\end{lemma}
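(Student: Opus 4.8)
The plan is to minimize the real function $\eta\mapsto\mathcal{J}(u^{L,\eta})$ directly, by showing it is coercive and lower semicontinuous. The upper bound is immediate: taking $\eta=0$ and using that $\tilde{u}^{L,0}\in\mathcal{A}_{L,0}$ while $u^{L,0}$ minimizes $\mathcal{J}$ over $\mathcal{A}_{L,0}$ (Lemma \ref{uleta-exists}), Lemma \ref{B} gives $\mathcal{J}(u^{L,0})\leq\mathcal{J}(\tilde{u}^{L,0})\leq C_0+c_0L$, so $\inf_{\eta}\mathcal{J}(u^{L,\eta})\leq C_0+c_0L$.

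For coercivity I would use that for each $y\in\R$ one has $u^{L,\eta}(L,y)-u^{L,\eta}(0,y)=\bar{u}_+(y-\eta)-\bar{u}_-(y)$, so by Cauchy--Schwarz $\vert\bar{u}_+(y-\eta)-\bar{u}_-(y)\vert^2\leq L\int_0^L\vert u^{L,\eta}_x(x,y)\vert^2\,{d} x$; integrating in $y$ gives
\[
\|\bar{u}_+(\cdot-\eta)-\bar{u}_-\|^2\leq L\int_\R\int_0^L\vert u^{L,\eta}_x\vert^2\,{d} x\,{d} y\leq 2L\,\mathcal{J}(u^{L,\eta}).
\]
Arguing as in the lines preceding (\ref{compact}) --- but with $\bar{u}_+(\cdot-\eta)$ and $\bar{u}_-$ in place of two translates of a single $\bar{u}_j$, and using $a=\tfrac{a_+-a_-}{2}\neq0$ --- one obtains $\|\bar{u}_+(\cdot-\eta)-\bar{u}_-\|^2\geq\vert a\vert^2\vert\eta\vert-C$ for $\vert\eta\vert$ large, hence $\mathcal{J}(u^{L,\eta})\to+\infty$ as $\vert\eta\vert\to\infty$. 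Therefore any minimizing sequence $\eta_j$ for $\inf_\eta\mathcal{J}(u^{L,\eta})$ is bounded, and, after passing to a subsequence, $\eta_j\to\bar\eta$ for some $\bar\eta\in\R$.

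To conclude I would pass to the limit. By the $L,\eta$-uniform estimate (\ref{smoothness}) (and the boundary regularity it rests on) a subsequence of $u^{L,\eta_j}$ converges in $C^2_{\mathrm{loc}}(\mathcal{R}_L;\R^m)$, uniformly up to $\{x=0\}$ and $\{x=L\}$, to a map $w$; since $\{\eta_j\}$ is bounded and the constants $C_{L,\eta},y_{L,\eta}$ in (\ref{near-a1}) depend on $\eta$ only through $\mathcal{J}(\tilde{u}^{L,\eta})\leq C_0(1+\vert\eta\vert)+c_0L$ and $\max\{0,\eta\}$, they may be chosen independent of $j$, so $w$ again satisfies (\ref{uni-bound}) and (\ref{near-a1}). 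In particular $w\in W^{1,2}_{\mathrm{loc}}(\mathcal{R}_L;\R^m)$ with $w(\cdot,0)=\bar{u}_-$ and $w(\cdot,L)=\lim_j\bar{u}_+(\cdot-\eta_j)=\bar{u}_+(\cdot-\bar\eta)$, i.e.\ $w\in\mathcal{A}_{L,\bar\eta}$. For each bounded rectangle $Q\subset\mathcal{R}_L$ the local convergence gives $\mathcal{J}_Q(w)=\lim_j\mathcal{J}_Q(u^{L,\eta_j})\leq\liminf_j\mathcal{J}(u^{L,\eta_j})$, and letting $Q\uparrow\mathcal{R}_L$ we get $\mathcal{J}(w)\leq\liminf_j\mathcal{J}(u^{L,\eta_j})=\inf_\eta\mathcal{J}(u^{L,\eta})$. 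Since $u^{L,\bar\eta}$ minimizes $\mathcal{J}$ over $\mathcal{A}_{L,\bar\eta}$,
\[
\mathcal{J}(u^{L,\bar\eta})\leq\mathcal{J}(w)\leq\inf_\eta\mathcal{J}(u^{L,\eta})\leq\mathcal{J}(u^{L,\bar\eta}),
\]
so equality holds throughout and $u^L:=u^{L,\bar\eta}$ is the desired map, with $\mathcal{J}(u^L)=\min_\eta\mathcal{J}(u^{L,\eta})\leq C_0+c_0L$. The step I expect to require the most care is this last passage to the limit on the \emph{unbounded} strip $\mathcal{R}_L$: it relies on the $j$-uniform interior bound (\ref{smoothness}) and on the fact that the decay control (\ref{near-a1}) is uniform over bounded sets of $\eta$; once these are in place, everything else is routine.
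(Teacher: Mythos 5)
Your proposal is correct and follows essentially the same route as the paper: an upper bound via the explicit competitor $\tilde u^{L,0}$, coercivity of $\eta\mapsto\mathcal J(u^{L,\eta})$ deduced from the boundary condition $u^{L,\eta}(L,\cdot)-u^{L,\eta}(0,\cdot)=\bar u_+(\cdot-\eta)-\bar u_-$ together with Cauchy--Schwarz and the linear growth of $\|\bar u_+(\cdot-\eta)-\bar u_-\|^2$, and then passage to the limit along a bounded minimizing sequence $\eta_j\to\bar\eta$ using the $L,\eta$-uniform bound (\ref{smoothness}) and lower semicontinuity. The only (cosmetic) difference is that you carry out the lower-semicontinuity step by exhausting the strip with bounded rectangles under $C^2_{\mathrm{loc}}$ convergence, whereas the paper simply refers back to the weak-compactness/Fatou argument of Lemma \ref{uleta-exists}; both are standard and correct.
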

\begin{proof}

1. There exists $\bar{y}>0$ such that
\begin{equation}\label{energ-low-b}
(\vert\eta\vert-2\bar{y})\frac{\vert a\vert^2}{L}\leq\mathcal{J}(\mathrm{u}),\quad\text{ for }\;\;
\vert\eta\vert\geq 2\bar{y},\;\mathrm{u}\in\mathcal{A}_{L,\eta},
\end{equation}
where $a=\frac{a_+-a_-}{2}$.

\noindent
Assume first $\eta\geq 0$. Since both $\bar{u}_-$ and $\bar{u}_+$ satisfy (\ref{n-bound}) there exists $\bar{y}>0$ such that
\[
\begin{split}
&\vert\bar{u}_-(y)-a_+\vert\leq\frac{1}{2}\vert a\vert,\quad\text{ for }\;\;y\geq\bar{y},\\
&\vert\bar{u}_+(y-\eta)-a_-\vert\leq\frac{1}{2}\vert a\vert,\quad\text{ for }\;\;y\leq\eta-\bar{y}.
\end{split}
\]
It follows
\[\begin{split}
&\vert\bar{u}_+(y-\eta)-\bar{u}_-(y)\vert\geq 2\vert a\vert-\vert\bar{u}_+(y-\eta)-a_--(\bar{u}_-(y)-a_+)\vert\\
&\geq\vert a\vert,\;\;\text{for}\;\;\bar{y}\leq y\leq\eta-\bar{y}
\end{split}\]
and therefore, since ${\rm{u}}\in\mathcal{A}_{L,\eta}$ implies
\[{\rm{u}}(L,y)-{\rm{u}}(0,y)=\bar{u}_+(y-\eta)-\bar{u}_-(y),\]
we have
\[\vert a\vert\leq\vert{\rm{u}}(L,y)-{\rm{u}}(0,y)\vert\leq L^\frac{1}{2}(\int_0^L\vert {\rm{u}}_x(x,y)\vert^2{d} x)^\frac{1}{2},\quad\text{ a.e. in }(\bar{y},\eta-\bar{y})\]
and in turn
\[(\eta-2\bar{y})\vert a\vert^2\leq L\int_{\bar{y}}^{\eta-\bar{y}}\int_0^L\vert {\rm{u}}_x(x,y)\vert^2{d} x\leq L \mathcal{J}({\rm{u}}).\]
This and similar estimates valid for $\eta<0$ prove 1.

2. Let $u^{L,\eta_j},\;j=1\ldots$ a minimizing sequence. From Lemma \ref{B} we can choose a minimizing sequence that satisfies
\begin{equation}\label{mini-sequence}
\lim_{j\rightarrow+\infty}\mathcal{J}(u^{L,\eta_j})=\inf_\eta\mathcal{J}(u^{L,\eta})\leq C_0+c_0L
\end{equation}
From (\ref{smoothness}), by passing to a subsequence if necessary, we can assume that there is a continuous function $u^L$ such that
\[\lim_{j\rightarrow+\infty}u^{L,\eta_j}=u^L\]
uniformly in compact sets. From (\ref{mini-sequence}) and (\ref{energ-low-b}) it follows that the sequence $\eta_j$, $j=1,\ldots$ is bounded and therefore, along a further subsequence,
\[\lim_{j\rightarrow+\infty}\eta_j=\bar{\eta}.\]
This and the uniform convergence of $u^{L,\eta_j}$ to $u^L$ imply that $u^L$ satisfies the boundary conditions in
 $\mathcal{A}_{L,\bar{\eta}}$. From this point we can proceed as in Lemma \ref{uleta-exists} to conclude that
  $u^L$ is the sought minimizer. The proof is complete
\end{proof}
The minimizer $u^L$ determined in Lemma \ref{ul-exists} can be identified with $u^{L,\bar{\eta}}$. Indeed from the fact that $u^L$ satisfies the boundary conditions for $\eta=\bar{\eta}$ we have
\[\mathcal{J}(u^{L,\bar{\eta}})\leq\mathcal{J}(u^L)=\min_\eta\mathcal{J}(u^{L,\eta})\leq\mathcal{J}(u^{L,\bar{\eta}}).\]
In the following when it is clear from the context we simply write $u$ instead of $u^L$ and we do the same with other functions of $L$ that we introduce later.
\subsection{Basic Lemmas}\label{section32}

In this section we prove a few lemmas that are basic for deriving estimates on $u^L$ that are uniform in $L$ and  allow to pass to the limit in (\ref{uS-lim}). In Lemma \ref{first-exp-decay} we prove that $u^L$ decays exponentially to $a_\pm$ as $y\rightarrow\pm\infty$. In Lemma \ref{ul-bounds} we show that $\int_0^L\int_\R\vert u_x^L\vert^2{d} x{d} y$ is uniformly bounded in $L$. This is a simple result which is essential for the analysis that follows.
\vskip.2cm
Note that $u^L$ satisfies (\ref{smoothness}) and is a classical solution of (\ref{system}). Note also that,
since $\bar{\eta}$ in Lemma \ref{ul-exists} depends only on $L$, when applied to $u=u^L$, (\ref{near-a1}) takes the form
\begin{equation}\label{near-a-2}
\begin{split}
&\vert u(x,y)-a_+\vert\leq\frac{C_L}{\sqrt{y}},\quad\text{ for }\;\;y\geq y_L,\\
&\vert u(x,y)-a_-\vert\leq\frac{C_L}{\sqrt{-y}},\quad\text{ for }\;\;y\leq-y_L
\end{split}
\end{equation}
for some constants $C_L>0$, $y_L>0$.

The fact that $u^L$ solves (\ref{system}) implies a sharper asymptotic behavior for $y\rightarrow\pm\infty$.
\begin{lemma}\label{first-exp-decay}
 There exist constants $k, K>0$ independent of $L>0$ and such that $u=u^L$ satisfies
\begin{equation}\label{exp-decay-2}
\begin{split}
&\vert u(x,y)-a_+\vert\leq r_0 e^{-k(y-y_L)},\quad\text{ for }\;\;y\geq y_L,\\
&\vert u(x,y)-a_-\vert\leq r_0 e^{k(y_L+y)},\quad\text{ for }\;\;y\leq-y_L,
\end{split}
\end{equation}
where $r_0>0$ is the constant in (\ref{second-derW}).

Moreover, for $\alpha\in\R^2$, $1\leq\vert\alpha\vert\leq 2$, it results
\begin{equation}\label{exp-decay2}
\begin{split}
&\vert (D^\alpha{u})(x,y)\vert\leq K e^{-k(y-y_L)},\;\;\text{ for }\;\; y\geq y_L,\\
&\vert (D^\alpha{u})(x,y)\vert\leq K e^{k(y+y_L)},\;\;\text{ for }\;\; y\leq -y_L.
\end{split}
\end{equation}
 \end{lemma}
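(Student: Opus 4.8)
The plan is a barrier/comparison argument carried out separately in the two half-strips $\Omega^+:=(0,L)\times(y_L,+\infty)$ and $\Omega^-:=(0,L)\times(-\infty,-y_L)$; I describe $\Omega^+$ only, the case $y\le -y_L$ being symmetric with $a_-$ in place of $a_+$. Recall from the proof of Lemma \ref{near-a} that $C_L/\sqrt{y_L}=r_0/2$ and that $y_L$ (together with $C_L$, keeping this relation) may be taken arbitrarily large for each fixed $L$ by enlarging $C_L'$; this only shrinks $\Omega^+$ and preserves (\ref{near-a-2}) there. Hence $|u-a_+|\le C_L/\sqrt{y}\le r_0/2\le r_0$ throughout $\overline{\Omega^+}$, which makes the quadratic bounds (\ref{second-derW}) applicable along the segment from $a_+$ to $u(x,y)$. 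Moreover I would from the outset take $y_L$ large enough — allowed to depend on $L$ — that, using (\ref{n-bound}) and the boundary data $u(0,\cdot)=\bar u_-$, $u(L,\cdot)=\bar u_+(\cdot-\bar\eta)$, one has $|u(0,y)-a_+|,\,|u(L,y)-a_+|\le\tfrac{r_0}{2}e^{-k(y-y_L)}$ for $y\ge y_L$, where $k:=\min\{\gamma/\sqrt2,\bar k\}$ depends only on $W$ (the first inequality needs $y_L\ge\bar k^{-1}\ln(2\bar K/r_0)$, the second $y_L\ge\bar\eta+\bar k^{-1}\ln(2\bar K/r_0)$).

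The core is a scalar differential inequality for $\psi:=|u-a_+|^2\ge 0$. Since $\Delta u=W_u(u)$ and $W_u(a_+)=0$, writing $W_u(u)=\bigl(\int_0^1 W_{uu}(a_++t(u-a_+))\,dt\bigr)(u-a_+)$ and invoking (\ref{second-derW}) gives $\tfrac12\Delta\psi=|\nabla u|^2+(u-a_+)\cdot W_u(u)\ge\gamma^2\psi$ in $\Omega^+$. I would then compare $\psi$ with the barrier $\Psi(x,y):=(r_0/2)^2 e^{-2k(y-y_L)}$: since $2k^2\le\gamma^2$ one has $\Delta\Psi=4k^2\Psi\le 2\gamma^2\Psi$, so $w:=\Psi-\psi$ satisfies $\Delta w-2\gamma^2 w\le 0$ in $\Omega^+$; by the previous paragraph and $C_L/\sqrt{y_L}=r_0/2$ one has $w\ge 0$ on $\partial\Omega^+$, and $w\to 0$ as $y\to+\infty$ by (\ref{near-a-2}). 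Applying the maximum principle for $\Delta-2\gamma^2$ on the bounded truncations $(0,L)\times(y_L,Y)$ and letting $Y\to+\infty$ yields $w\ge 0$, i.e. $|u-a_+|\le\tfrac{r_0}{2}e^{-k(y-y_L)}\le r_0 e^{-k(y-y_L)}$ on $\Omega^+$, which is (\ref{exp-decay-2}); both $r_0$ and $k$ are independent of $L$.

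For the derivative bounds (\ref{exp-decay2}) I would use interior elliptic regularity on unit balls $B_1(z_0)$, $z_0=(x_0,y_0)$ with $y_0\ge y_L+1$ (for $y_L\le y\le y_L+1$ the uniform bound (\ref{smoothness}) already gives (\ref{exp-decay2}) after enlarging $K$). On such a ball (\ref{exp-decay-2}) gives $|u-a_+|\le Ce^{-k(y_0-y_L)}$, hence $|W_u(u)|=|W_u(u)-W_u(a_+)|\le Ce^{-k(y_0-y_L)}$; interior $W^{2,p}$ estimates with $p>2$ and Sobolev embedding then give $\|u-a_+\|_{C^{1,\gamma}(B_{1/2}(z_0))}\le Ce^{-k(y_0-y_L)}$, so that $[W_u(u)]_{C^\gamma(B_{1/2}(z_0))}\le\sup|W_{uu}|\,[u]_{C^\gamma(B_{1/2}(z_0))}\le Ce^{-k(y_0-y_L)}$, and a second application of interior Schauder estimates gives $\|D^\alpha u\|_{C^0(B_{1/4}(z_0))}\le Ce^{-k(y_0-y_L)}$ for $1\le|\alpha|\le 2$. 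Since all these elliptic constants are local, they carry no $L$-dependence, and the same reasoning on $\Omega^-$ finishes the proof.

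The only genuinely delicate point is the $L$-independence of $k$ and $K$, and it is handled precisely by the freedom to take $y_L$ (and $C_L$) as large as needed for each fixed $L$: the barrier's rate $k$ and prefactor $r_0$ come from $W$ alone, and the elliptic estimates above are local. Everything else is routine.
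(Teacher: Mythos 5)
Your proof is correct and, at its core, is the same comparison argument as the paper's: derive the scalar differential inequality $\Delta|u-a_+|^2\ge 2\gamma^2|u-a_+|^2$ from the equation and the nondegeneracy of $a_+$, arrange (using Lemma~\ref{near-a} and the exponential decay of the boundary data $\bar u_-$, $\bar u_+(\cdot-\bar\eta)$, with $y_L$ possibly enlarged) that a pure exponential dominates $|u-a_+|^2$ on $\partial\Omega^+$, and invoke the maximum principle. The only cosmetic difference is the barrier itself: the paper uses the bounded-domain solutions $\varphi_l(x,y)=r_0^2\cosh\bigl(\tilde\gamma(l-(y-y_L))\bigr)/\cosh(\tilde\gamma l)$ on $(0,L)\times(y_L,y_L+2l)$ and lets $l\to\infty$, whereas you use the supersolution $(r_0/2)^2 e^{-2k(y-y_L)}$ directly on the half-strip and exhaust by $(0,L)\times(y_L,Y)$; both handle the far boundary cleanly (you via $w\ge-\psi(\cdot,Y)\to 0$, the paper via $\varphi_l=r_0^2$ at the top) and both impose the same two constraints on the rate ($2k^2\le\gamma^2$ and $k\le\bar k$). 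One small point to tighten in the derivative step: unit balls $B_1(z_0)$ do not fit inside $\mathcal R_L$ for $x_0$ within distance $1$ of the lateral sides (and $L$ may be only slightly larger than $1$), so near $x=0$ and $x=L$ you should either shrink the balls or, better, use up-to-the-boundary Schauder/$W^{2,p}$ estimates on half-balls, which is legitimate because the Dirichlet data $\bar u_-$ and $\bar u_+(\cdot-\bar\eta)$ are smooth and themselves satisfy (\ref{n-bound}), hence the same exponential decay after the enlargement of $y_L$ you have already made; this is exactly what the paper's brief remark about ``the smoothness of $W$ and of the boundary conditions and elliptic regularity'' is appealing to.
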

\begin{proof} A standard computation yields that, provided $\vert u-a_+\vert\leq r_0$, we have
\begin{equation}\label{standard}
W_u(u)\cdot(u-a_+)=(W_u(u)-W_u(a_+))\cdot(u-a_+)\geq\gamma^2\vert u-a_+\vert^2,
\end{equation}
where $\gamma$ is the constant in (\ref{second-derW}) and we have used $W_u(a_+)=0$. On the other hand, since $u=u^L$ solves (\ref{system}) we have
\[\Delta\vert u-a_+\vert^2\geq 2(\Delta u)\cdot(u-a_+)=2W_u(u)\cdot(u-a_+),\]
and therefore (\ref{standard}) implies
\begin{equation}\label{delta-Q}
\Delta\vert u-a_+\vert^2\geq 2\gamma^2\vert u-a_+\vert^2\quad\text{ on }\;\;\mathcal{R}_L\cap\{\vert u-a_+\vert\leq r_0\}.
\end{equation}
From (\ref{near-a-2}), by redefining $y_L$ if necessary, we can assume
\begin{equation}\label{yL-yL+l}
\vert u(x,y)-a_+\vert\leq r_0\quad\text{ for }\;\;x\in[0,L],\;y\geq y_L.
\end{equation}
Similarly from (\ref{n-bound}) and the fact that $\bar{\eta}$ depends only on $L$ we can assume
\begin{equation}\label{0Lboundary}
\begin{split}
&\vert u(0,y)-a_+\vert=\vert\bar{u}_-(y)-a_+\vert\leq r_0e^{-\bar{k}(y-y_L)},\quad\text{ for }\;\;y\geq y_L,\\
&\vert u(L,y)-a_+\vert=\vert\bar{u}_+(y-\bar{\eta})-a_+\vert\leq r_0e^{-\bar{k}(y-y_L)},\quad\text{ for }\;\;y\geq y_L.
\end{split}
\end{equation}

Let $\tilde{\gamma}>0$ be a number to be chosen later and, for $l>0$, set
\[\varphi_l(x,y):= r_0^2\frac{\cosh{\tilde{\gamma}(l-(y-y_L))}}{\cosh{\tilde{\gamma} l}},\quad\text{ for }\;\;x\in[0,L],\;y\in[y_L,y_L+2l].\]
Observe that we have
\[\begin{split}
&\varphi_l(x,y)=r_0^2\frac{1+e^{-2\tilde{\gamma} l}e^{2\tilde{\gamma}(y-y_L)}}
{1+e^{-2\tilde{\gamma} l}}e^{-\tilde{\gamma}(y-y_L)}\\
&\geq r_0^2e^{-\tilde{\gamma}(y-y_L)},\quad\text{ for }\;\;x\in[0,L],\;y\in[y_L,y_L+2l].
\end{split}\]
and
\begin{equation}\label{phi-upper-bound}
2r_0^2e^{-\tilde{\gamma}(y-y_L)}\geq\varphi_l(x,y),\quad\text{ for }\;\;x\in[0,L],\;y\in[y_L,y_L+l]
\end{equation}
and therefore that from (\ref{0Lboundary}), provided we assume $\tilde{\gamma}\leq 2\bar{k}$, it results
\begin{equation}\label{lateral}
\begin{split}
&\vert u(0,y)-a_+\vert^2\leq\varphi_l(0,y),\quad\text{ for }\;\;y\in[y_L,y_L+2l],\\
&\vert u(L,y)-a_+\vert^2\leq\varphi_l(L,y),\quad\text{ for }\;\;y\in[y_L,y_L+2l].
\end{split}
\end{equation}
 This, (\ref{yL-yL+l}) and $\varphi_l(x,y_L)=\varphi_l(x,y_L+2l)=r_0^2$ imply
\begin{equation}\label{all-boundary}
\vert u(x,y)-a_+\vert^2\leq\varphi_l(x,y),\quad\text{ on }\;\;\partial R_l
\end{equation}
where $R_l:=(0,L)\times(y_L,y_L+2l)$.
Now note that $\varphi_l$ is a solution of
\[\Delta\varphi=\tilde{\gamma}^2\varphi\]
and assume $\tilde{\gamma}\leq\min\{\sqrt{2}\gamma,2\bar{k}\}$. Then from (\ref{delta-Q}), (\ref{all-boundary}) and the maximum principle we conclude
\begin{equation}\label{insideRl}
\vert u(x,y)-a_+\vert^2\leq\varphi_l(x,y),\quad\text{ on }\;\; R_l,
\end{equation}
and therefore from (\ref{phi-upper-bound}) we have
\[\vert u(x,y)-a_+\vert^2\leq 2r_0^2e^{-\tilde{\gamma}(y-y_L)},\quad\text{ for }\;\;y\in[y_L,y_L+l].\]
Since this is valid for all $l>0$ we obtain
\[\vert u(x,y)-a_+\vert\leq\sqrt{2}r_0e^{-\frac{\tilde{\gamma}}{2}(y-y_L)},\quad\text{ for }\;\;y\geq y_L\]
which implies (\ref{exp-decay-2})$_1$ with $k=\frac{\tilde{\gamma}}{2}$ after changing $y_L$ to $y_L+\frac{1}{\tilde{\gamma}}\ln{2} $. The other case is discussed in a similar way. Once (\ref{exp-decay-2}) is established the estimates for the derivatives follows from the smoothness of $W$ and of the boundary conditions and elliptic regularity. The proof is complete.
\end{proof}
From Lemma \ref{first-exp-decay} and (\ref{smoothness}) it follows that, for $u=u^L$, the quantities
\begin{equation}\label{quantities}
\begin{split}
&q(x)=\min_{r\in\R,\mathrm{p}\in\{-,+\}}\|u(x,\cdot)-\bar{u}_{\mathrm{p}}(\cdot-r)\|,\\
&\min_{r\in\R,\mathrm{p}\in\{-,+\}}\|u(x,\cdot)-\bar{u}_{\mathrm{p}}(\cdot-r)\|_1,\\
&\int_\R W(u(x,y)){d} y,\quad\int_\R\vert u_x(x,y)\vert^2{d} y,\quad\int_\R\vert u_y(x,y)\vert^2{d} y,
\end{split}
\end{equation}
are well defined and continuous for $x\in[0,L]$.
\begin{lemma}\label{ul-bounds}
Let $u^L$ be as in Lemma \ref{ul-exists}. Then
\begin{equation}
\begin{split}
&c_0L\leq\mathcal{J}(u^L)\leq c_0L+C_0,\\
&\int_0^L\int_\R\vert u^L_x\vert^2{d} x{d} y\leq 2C_0.
\end{split}
\end{equation}
where $c_0$ and $C_0$ are the constants in Lemma \ref{B}.
\end{lemma}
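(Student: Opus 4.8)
The plan is to use the fibered form of the energy on the strip $\mathcal{R}_L=(0,L)\times\R$: for $u=u^L$,
\[
\mathcal{J}(u)=\int_0^L\Big(J_\R(u(x,\cdot))+\frac{1}{2}\|u_x(x,\cdot)\|^2\Big)\,dx ,
\]
where the splitting is legitimate because, as noted just before the statement, the quantities in (\ref{quantities}) are well defined and continuous on $[0,L]$. Everything will then follow once we know that every slice $u(x,\cdot)$ is an admissible competitor for the one-dimensional problem (\ref{barmin}).

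First I would record the upper bound $\mathcal{J}(u^L)\le c_0L+C_0$: this is exactly the inequality already obtained in Lemma \ref{ul-exists}, namely $\mathcal{J}(u^L)=\min_{\eta}\mathcal{J}(u^{L,\eta})\le C_0+c_0L$, which rests on taking $\eta=0$ in (\ref{B-est}) of Lemma \ref{B}.

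Next I would verify the fiber claim. By (\ref{smoothness}) the minimizer $u^L$ is bounded by $M$ and of class $C^{2,\gamma}$ up to $\partial\mathcal{R}_L$, and Lemma \ref{first-exp-decay} (estimates (\ref{exp-decay-2}) and (\ref{exp-decay2})) shows that $u^L$ and its $y$-derivative decay exponentially to $a_\pm$, respectively to $0$, as $y\to\pm\infty$, uniformly in $x\in[0,L]$. Hence, for each fixed $x$, the map $u^L(x,\cdot)$ lies in $W^{1,2}_{\mathrm{loc}}(\R;\R^m)$, satisfies $\lim_{y\to\pm\infty}u^L(x,y)=a_\pm$, and has finite energy $J_\R(u^L(x,\cdot))<\infty$. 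Thus $u^L(x,\cdot)\in\mathcal{A}$, and since $\bar u_1,\dots,\bar u_N$ are (modulo translation) the only minimizers of $J_\R$, with minimal value $c_0$, we obtain $J_\R(u^L(x,\cdot))\ge c_0$ for every $x\in[0,L]$; integrating, $\int_0^L J_\R(u^L(x,\cdot))\,dx\ge c_0L$.

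Finally I would combine. Because $\frac{1}{2}\|u_x(x,\cdot)\|^2\ge 0$, the displayed decomposition gives $\mathcal{J}(u^L)\ge\int_0^L J_\R(u^L(x,\cdot))\,dx\ge c_0L$, which is the lower bound. For the kinetic term, the same decomposition yields
\[
\int_0^L\!\int_\R|u^L_x|^2\,dx\,dy=2\mathcal{J}(u^L)-2\int_0^L J_\R(u^L(x,\cdot))\,dx\le 2(c_0L+C_0)-2c_0L=2C_0 .
\]
I do not expect a genuine difficulty here; the only point that needs care is the justification that each slice $u^L(x,\cdot)$ belongs to $\mathcal{A}$ with finite one-dimensional energy, and this is precisely what the uniform exponential estimates of Lemma \ref{first-exp-decay} together with the bound (\ref{smoothness}) are designed to deliver.
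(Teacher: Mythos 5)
Your proof is correct and follows essentially the same route as the paper: fiber the energy as $\mathcal{J}(u^L)=\int_0^L\bigl(J_\R(u^L(x,\cdot))+\tfrac12\|u_x^L(x,\cdot)\|^2\bigr)dx$, use $J_\R(u^L(x,\cdot))\ge c_0$ (since each slice tends to $a_\pm$ as $y\to\pm\infty$ and $\bar u_\pm$ minimize $J_\R$), and combine with the upper bound $\mathcal{J}(u^L)\le c_0L+C_0$ from Lemma \ref{ul-exists}. The only cosmetic difference is that you justify slice admissibility via the exponential decay of Lemma \ref{first-exp-decay}, whereas the paper invokes the weaker statement $\lim_{y\to\pm\infty}u^L=a_\pm$ from Lemma \ref{near-a}, which already suffices.
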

\begin{proof}
Lemma \ref{near-a} implies $\lim_{y\rightarrow\pm\infty}u^L=a_\pm$ and therefore from $c_0=J_\R(\pm\bar{u}_\pm)$  and the minimizing character of $\bar{u}_\pm$ it follows
\begin{equation}\label{min-char}
\int_\R\Big(W(u^L(x,y)+\frac{1}{2}\vert u^L_y(x,y)\vert^2\Big){d} y\geq c_0,\quad\text{ for }\;x\in[0,L].
\end{equation}
From this and Lemma \ref{ul-exists} it follows
\[\begin{split}
&\frac{1}{2}\int_0^L\int_\R\vert u^L_x\vert^2{d} x{d} y\\
&\leq\int_0^L\Big(\int_\R\Big(W(u^L)+\frac{1}{2}\vert u^L_y\vert^2\Big){d} y-c_0\Big){d} x+\frac{1}{2}\int_0^L\int_\R\vert u^L_x\vert^2{d} x{d} y\leq C_0.
\end{split}\]
The proof is complete
\end{proof}

\begin{lemma}\label{yvar}
Assume $u=u^L$ is as in Lemma \ref{ul-exists}. Then $u$ satisfies (\ref{hamilton}) and (\ref{uxuyconst}) for $x\in[0.L]$ and for some constants $\omega=\omega_L$ and $\tilde{\omega}=\tilde{\omega}_L$.
Moreover
\begin{equation}\label{uxuyconst-1}
\begin{split}
&0\leq\omega\leq\frac{C_0}{L},\\
&\tilde{\omega}=0.
\end{split}
\end{equation}
\end{lemma}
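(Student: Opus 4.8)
The plan is to obtain the two Hamiltonian identities for $u=u^L$ by the same inner variations used in Lemma \ref{properties}, the role of Lemma \ref{point-exp} and Lemma \ref{extend} being played here by Lemma \ref{first-exp-decay} and by the fact that $\mathcal{R}_L$ is literally the domain over which $u^L$ minimizes, and then to pin down the two constants from the boundary data and from the energy bound of Lemma \ref{ul-bounds}. Concretely, fix $I=[0,L]$ and repeat the two computations of Lemma \ref{properties}: the reparametrization $v(s,y)=u(g(s),y)$ with $g\colon[0,L]\to[0,L]$ a $C^1$ increasing bijection fixing $0$ and $L$, and the vertical shift $v^\lambda(x,y)=u(x,y-\lambda g(x))$ with $g\in C^1([0,L])$ and $g(0)=g(L)=0$. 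Both competitors lie in $\mathcal{A}_{L,\bar\eta}$ because they leave the traces $u(0,\cdot)=\bar u_-$ and $u(L,\cdot)=\bar u_+(\cdot-\bar\eta)$ unchanged; by (\ref{smoothness}) and Lemma \ref{first-exp-decay} they are bounded by $M$ and decay exponentially in $y$ uniformly in $x$, so they have finite energy and the first variation may be computed by differentiation under the integral sign. Since $u^L$ minimizes $\mathcal{J}$ on $\mathcal{A}_{L,\bar\eta}$ these first variations vanish, and arguing exactly as in Lemma \ref{properties} one gets (\ref{hamilton}) with a constant $c_0-\omega_L$ and (\ref{uxuyconst}) with a constant $\tilde\omega_L$ on $(0,L)$; by continuity of the quantities in (\ref{quantities}) the identities hold on all of $[0,L]$.

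For the bounds on $\omega_L$, evaluate (\ref{hamilton}) at $x=0$: there $u(0,\cdot)=\bar u_-$ is a minimizer of $J_\R$, so $J_\R(u(0,\cdot))-c_0=0$ and therefore $\omega_L=\tfrac{1}{2}\|u_x(0,\cdot)\|^2\ge 0$, the norm being finite by (\ref{smoothness}) and (\ref{exp-decay2}). For the upper bound, observe that $u(x,\cdot)$ connects $a_-$ to $a_+$ for every $x\in[0,L]$ (Lemma \ref{first-exp-decay} for $x\in(0,L)$, and the boundary data at $x=0,L$), so $J_\R(u(x,\cdot))-c_0\ge 0$; integrating (\ref{hamilton}) over $[0,L]$ gives
\[
L\,\omega_L=\int_0^L\Big(\tfrac{1}{2}\|u_x(x,\cdot)\|^2-\big(J_\R(u(x,\cdot))-c_0\big)\Big)\,dx\le\int_0^L\tfrac{1}{2}\|u_x(x,\cdot)\|^2\,dx\le C_0,
\]
the last inequality being Lemma \ref{ul-bounds}. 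This proves $0\le\omega_L\le C_0/L$.

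Finally, to get $\tilde\omega_L=0$ I would use the sharper minimality from Lemma \ref{ul-exists}: $u^L$ minimizes $\mathcal{J}$ over $\bigcup_{\eta}\mathcal{A}_{L,\eta}$, which allows a vertical shift that does not vanish at $x=L$. Take $v^\lambda(x,y)=u(x,y-\lambda x/L)$, so $v^\lambda\in\mathcal{A}_{L,\bar\eta+\lambda}$; performing the change of variable $y\mapsto y+\lambda x/L$ in the inner integral exactly as in the derivation of (\ref{jv-1}), and using (\ref{uxuyconst}), one finds
\[
\mathcal{J}(v^\lambda)=\mathcal{J}(u^L)-\lambda\,\tilde\omega_L+\frac{\lambda^2}{2L^2}\int_0^L\Big(\int_\R|u_y(x,y)|^2\,dy\Big)\,dx,
\]
so that $\lambda=0$ minimizes the right-hand side only if $\tilde\omega_L=0$.

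The computations themselves are routine repetitions of Section \ref{section23}; the points that need care are keeping each competing map in the correct admissible class with finite energy, and recognizing that the free endpoint parameter $\eta$ (available thanks to Lemma \ref{ul-exists}) is exactly what is needed to kill $\tilde\omega_L$ — a vertical shift vanishing at both $x=0$ and $x=L$ would only reveal that $\int_\R u_x\cdot u_y\,dy$ is independent of $x$, i.e.\ it would only re-prove (\ref{uxuyconst}).
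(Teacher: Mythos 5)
Your proof is correct and follows essentially the same route as the paper: the two Hamiltonian identities are obtained by the inner variations of Lemma \ref{properties}, the sign and the bound on $\omega_L$ come from the boundary value at $x=0$ together with the energy bound of Lemma \ref{ul-bounds} and the minimality (\ref{min-char}), and $\tilde\omega_L=0$ is extracted from the extra freedom of $\eta$ in (\ref{condition}). You simply make the last step explicit by choosing $g(x)=x/L$, where the paper merely remarks that the constraint $g(0)=g(L)=0$ can be dropped.
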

\begin{proof}
The first part of the Lemma is proved as in  Lemma \ref{properties}. From (\ref{hamilton}) and (\ref{min-char}) we have
\begin{equation}\label{+omega}
\frac{1}{2}\int_\R\vert u_x(x,y)\vert^2{d} y=J_\R(u(x,\cdot))-c_0+\omega\geq\omega,\quad\text{ for }\;\;x\in[0,L].
\end{equation}
Recalling the boundary value $u(0,\cdot)=\bar{u}_-$ that implies $J_\R(u(0,\cdot))=c_0$ we see that, for $x=0$,  (\ref{+omega}) implies
\begin{equation}\label{+omega1}
\frac{1}{2}\int_\R\vert u_x(0,y)\vert^2{d} y=\omega
\end{equation}
and therefore that $\omega\geq 0$. By integrating (\ref{+omega}) on $[0,L]$ and using also Lemma \ref{ul-bounds} yields
\begin{equation}\label{+omega11}
\omega L\leq\int_0^L\frac{1}{2}\int_\R\vert u_x(x,y)\vert^2{d} x{d} y\leq C_0
\end{equation}
that is $\omega\leq\frac{C_0}{L}$.
 To prove that $\tilde{\omega}=0$ it suffices to observe that $u=u^L$ is a solution of (\ref{condition}) and therefore the restriction (\ref{0boundary}) in the proof of Lemma \ref{properties} can be removed.
The proof is complete.
\end{proof}

\subsection{Structural properties of $u^L$}\label{section33}
We are now able to derive detailed information on the structure of the minimizer $u^L\in\mathcal{A}_{L,\bar{\eta}}$ determined in Lemma \ref{ul-exists}. This knowledge of $u^L$, in particular the fact that, as we show below,  $\bar{\eta}$ is bounded independently of $L$, will allow us to pass to the limit in (\ref{uS-lim}) and show that the limit map is a solution of (\ref{system}) with the properties required in Theorem \ref{scha-th}.

Let $u=u^L$. Fix ${p}\in(0,q^0]$ and let $S_{p}\subset[0,L]$ be the complement of the set $\tilde{S}_{p}$ defined by
\begin{equation}\label{size-cond}
\tilde{S}_{p}:=\{x\in(0,L):\|u_x(x,\cdot)\|^2>e_{p}\}
\end{equation}
where $e_{p}$ is the constant in Lemma \ref{away}.
From Lemma \ref{ul-bounds}
 it follows that the measure of $\tilde{S}_{p}$ is bounded independently of $L>1$. Indeed we have:
\[\vert\tilde{S}_{p}\vert\frac{e_{p}}{2}\leq\frac{1}{2}\int_0^L \|u_x(x,\cdot)\|^2{d} x\leq C_0\]
and therefore
\begin{equation}\label{esse-tilde}
\vert\tilde{S}_{p}\vert\leq\frac{2C_0}{e_{p}},\quad\text{ and }\quad\vert S_{p}\vert\geq L-\frac{2C_0}{e_{p}}.
\end{equation}
From Lemma \ref{yvar} we know that $u=u^L$ satisfies (\ref{hamilton}) with $\omega\geq 0$ and it follows
\begin{equation}\label{EffPotKin}
J(u(x,\cdot))-c_0\leq\frac{1}{2}\|u_x(x,\cdot)\|^2\leq\frac{e_{p}}{2},\;\;x\in S_{p}.
\end{equation}
This and Lemma \ref{away} imply
\begin{equation}\label{pLessq}
\begin{split}
& q(x)=\min_{\mathrm{p}\in\{-,+\}}\min_{r\in\R}\|u(x,\cdot)-\bar{u}_{\mathrm{p}}(\cdot-r)\|\\
&\leq\min_{\mathrm{p}\in\{-,+\}}\min_{r\in\R}\|u(x,\cdot)-\bar{u}_{\mathrm{p}}(\cdot-r)\|_1\leq{p}\leq q^0.
\end{split}
\end{equation}
 Therefore from Lemma \ref{lemmaw} we can associate to each $x\in S_{p}$ unique $\bar{u}\in\{\bar{u}_-,\bar{u}_+\}$
and $h(x)\in\R$ that allow to represent $u$ as in (\ref{decomp-i1}):
 \[u(x,\cdot)=\bar{u}(\cdot-h(x))+v(x,\cdot-h(x)),\;\;x\in S_{p}\]
 with
 \begin{equation}\label{vL}
 v(x,y)=v^L(x,y):=u^L(x,y+h(x))-\bar{u}(y)
 \end{equation} that satisfies
\[\langle v(x,\cdot),\bar{u}^\prime\rangle=0.\]
If needed, we indicate that the map $\bar{u}\in\{\bar{u}_-,\bar{u}_+\}$ associated to $x\in S_{p}$ depends on $x$ by using the notation $\bar{u}=\bar{u}_{\mathrm{p}_x}$ with $\mathrm{p}_x\in\{-,+\}$. Note that $\mathrm{p}_x=Const$ in each interval contained in $S_{p}$.
and from (\ref{pLessq}) we have
\begin{equation}\label{pLessq1}
\begin{split}
&\|v(x,\cdot)\|=\|u(x,\cdot)-\bar{u}(\cdot-h(x))\|\\
&=\min_{\mathrm{p}\in\{-,+\}}\min_{r\in\R}\|u(x,\cdot)-\bar{u}_{\mathrm{p}}(\cdot-r)\|
\leq{p}\leq q^0.
\end{split}
\end{equation}
Note that (\ref{w12-diffe}) in Lemma \ref{lemmaw1} implies
\begin{equation}\label{vy}
\|v_y(x,\cdot)\|\leq\|v(x,\cdot)\|_1\leq\bar{C}p^\frac{1}{2}.
\end{equation}
From (\ref{exp-decay-2}) and (\ref{exp-decay2}) and Lemma \ref{hprime-hpprime?} it follows that
the map $S_{p}\ni x\rightarrow h(x)\in\R$ is continuously differentiable and
\[\vert h(x)\vert, \vert h^\prime(x)\vert\leq C_L,\]
for some constant $C_L>0$ that may depend on $L>1$. Moreover from Lemma \ref{represent}, and Lemma \ref{yvar} that yields $\tilde{\omega}=0$, we have that the expression (\ref{hprime}) and (\ref{xenergy}) respectively are valid for $h^\prime(x)$ and $\|u_x(x,\cdot)\|^2$ for $x\in S_{p}$.
We assume $p\leq\frac{2^\frac{1}{2}-1}{\bar{C}}\|\bar{u}^\prime\|$ then we have
\[\frac{\|v_y(x,\cdot)\|^2}{\|\bar{u}^\prime+v_y(x,\cdot)\|^2}
\leq\frac{\bar{C}^2p}{(\|\bar{u}^\prime\|-\bar{C}p^\frac{1}{2})^2}\leq\frac{1}{2}\]
and (\ref{xenergy}) implies
\begin{equation}\label{vxW}
\|v_x(x,\cdot)\|\leq 2^\frac{1}{2}\|u_x(x,\cdot)\|,\;\;x\in S_{p}.
\end{equation}
We also observe that, on the basis of (\ref{pLessq1}), we can use (\ref{W2geqq2-i}) and deduce from (\ref{EffPotKin}) that
\[\frac{1}{2}\mu\|v_y(x,\cdot)\|^2\leq\mathcal{W}(v)=J(u(x,\cdot))-c_0\leq\frac{1}{2}\|u_x(x,\cdot)\|^2\]
and in turn
\begin{equation}\label{vyW}
\|v_y(x,\cdot)\|\leq\frac{1}{\mu^\frac{1}{2}}\|u_x(x,\cdot)\|,\;\;x\in S_{p}.
\end{equation}

 \begin{proposition}\label{eta-bounded}
Let $u=u^L$ the minimizer in Lemma \ref{ul-exists}. Then there is a constant $C>0$ independent of $L$ such that
\begin{equation}\label{eta-bound1}
\vert\bar{\eta}\vert\leq C.
\end{equation}
\end{proposition}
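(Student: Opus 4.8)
The plan is to regard $u=u^L$ as a curve $x\mapsto u(x,\cdot)$ in $\bar u+W^{1,2}(\R;\R^m)$ and to track its \emph{phase} $h(x)$ relative to the manifolds of translates of $\bar u_\pm$, bounding the total variation of $h$ along $[0,L]$ by quantities controlled by the bounded excess energy. First I would record the available book‑keeping: by Lemma~\ref{ul-bounds} and Lemma~\ref{yvar} one has, uniformly in $L$, $\int_0^L\big(J_\R(u(x,\cdot))-c_0\big)\,dx\le C_0/2$, $\int_0^L\|u_x(x,\cdot)\|^2\,dx\le 2C_0$, and $0\le\omega\le C_0/L$, $\tilde\omega=0$, so in particular $J_\R(u(x,\cdot))-c_0\le\tfrac12\|u_x(x,\cdot)\|^2$. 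Since $u(0,\cdot)=\bar u_-$, $u(L,\cdot)=\bar u_+(\cdot-\bar\eta)$ and the two orbits are distinct (so $q^*>0$ in (\ref{q*})), the decomposition of Lemma~\ref{lemmaw} at $x=0$ is forced to have type $-$ and $h(0)=0$, and at $x=L$ type $+$ and $h(L)=\bar\eta$; the whole task is to bound $|h(L)-h(0)|$.

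On the open set $G=\{x\in[0,L]:q(x)<q^0\}$ the decomposition $u(x,\cdot)=\bar u_{\mathrm{p}_x}(\cdot-h(x))+v(x,\cdot-h(x))$ holds with $\mathrm{p}_x$ locally constant, and combining the formula (\ref{hprime}) for $h'$ with (\ref{vxW}), (\ref{vyW}) and (\ref{den-bounds}) gives $|h'(x)|\le C\|u_x(x,\cdot)\|^2$ on $G$; hence the total variation of $h$ over the union of the components of $G$ is at most $C\int_0^L\|u_x\|^2\,dx\le 2CC_0$, independent of $L$. The ``bad set'' $B=\{x:q(x)\ge q^0\}$ satisfies $|B|\le C_0/(2e_{q^0})$ by Lemma~\ref{away}, since $q(x)\ge q^0$ forces $J_\R(u(x,\cdot))-c_0\ge e_{q^0}$. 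By Remark~\ref{TransPoss}, Lemma~\ref{NoMax} applies to $u^L$, so $q$ has no interior maximum on subintervals of $G$; together with $q(0)=0$ this makes the component of $G$ containing $0$ an interval $[0,x_1)$ on which $q$ is nondecreasing, $q(x_1)=q^0$, type $-$, whence $|h|\le 2CC_0$ there; symmetrically $q$ is nonincreasing and of type $+$ on $(x_2,L]$ with $|h-\bar\eta|\le 2CC_0$.

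It then remains to control how the representation of $u$ evolves across $[x_1,x_2]$. Write $[0,L]$ alternately as components $G_0,\dots,G_m$ of $G$ (types $\tau_0=-,\dots,\tau_m=+$) separated by gaps $B_1,\dots,B_m$ with $q\ge q^0$ on each $B_j$, and set $s_j:=\|u(a_j,\cdot)-u(b_j,\cdot)\|$ for the endpoints $a_j<b_j$ of $B_j$. Then $\sum_j s_j\le\int_B\|u_x\|\,dx\le|B|^{1/2}(2C_0)^{1/2}$ and each $s_j\le(2C_0|B|)^{1/2}$. If $\tau_{j-1}\ne\tau_j$, the endpoint profiles of $u$ on $B_j$ lie within $q^0$ of translates of the two distinct orbits, so $s_j\ge q^*-2q^0=:c_*>0$ and $\int_{B_j}\|u_x\|^2\ge c_*^2/|B_j|$; since $\sum_j\int_{B_j}\|u_x\|^2\le 2C_0$ and $\sum_j|B_j|\le|B|$, a Cauchy--Schwarz estimate bounds the number of type switches by $(2C_0|B|)^{1/2}/c_*$, uniformly in $L$. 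For the jump of the phase across $B_j$ one uses $\|\bar u_{\tau_{j-1}}(\cdot-h(a_j^-))-\bar u_{\tau_j}(\cdot-h(b_j^+))\|\le s_j+2q^0$: when $\tau_{j-1}\ne\tau_j$ this and the lower bound in (\ref{compact}) bound the corresponding phase difference by $C(s_j+2q^0)^2$, bounded, and there are boundedly many such $j$; when $\tau_{j-1}=\tau_j$ and $s_j\le q^0$ the Lipschitz estimate (\ref{derivative}) for the closest‑translate map gives a phase jump $\le Cs_j$, while the boundedly many $j$ with $\tau_{j-1}=\tau_j$ and $s_j>q^0$ each contribute at most $C(s_j+2q^0)^2\le C$. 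Adding the component variations $\sum_j V_j\le 2CC_0$ and the gap jumps (which total $\le C\sum_j s_j+(\text{finitely many bounded terms})\le C$) along the chain from $h(0)=0$ to $h(L)=\bar\eta$ yields $|\bar\eta|\le C$ with $C$ independent of $L$.

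I expect the main obstacle to be the organization of this last step, not any single estimate: one must ensure that the a priori arbitrarily many short excursions of $q$ above $q^0$ inside $[x_1,x_2]$ do not let the phase drift, which is exactly why it is essential to combine the Cauchy--Schwarz bound on the number of ``large'' gaps with the Lipschitz control of the phase across the ``small'' ones and with the $L^2$-separation (\ref{q*}), (\ref{compact}) of the two orbit manifolds; a cleaner alternative for a few of these sub‑cases is a competitor argument in the spirit of Lemma~\ref{NoMax}/Remark~\ref{TransPoss}, replacing an excursion that moved the phase too much by a suitably translated copy of $u$ of strictly lower energy, contradicting the minimality of $\bar\eta$.
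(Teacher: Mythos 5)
Your proposal takes the same overall strategy as the paper (decompose $|\bar\eta|=|h(L)-h(0)|$ into the variation of $h$ on a ``good'' set of full measure, plus phase jumps across a ``bad'' set whose measure is uniformly bounded, and control both by the bounded kinetic energy), but with a different decomposition. The paper defines the good set $S_p$ through a kinetic-energy threshold $\{\|u_x\|^2\le e_p\}$, uses the Hamiltonian identity plus Lemma~\ref{away} to infer $q\le p$ there, and then splits the countably many gap intervals by their \emph{length} $\lambda$; for short gaps the change of $u(x,\cdot)$ is small enough that the decomposition of Lemma~\ref{lemmaw} continues across the gap with $|h'|\le C\|u_x\|$, while long gaps are finitely many and are handled by a pointwise-in-$y$ argument (\ref{bigger-gamma}) based on (\ref{elem-observ}). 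You instead define the good set by $q<q^0$, split gaps by their $L^2$-size $s_j$, and replace the pointwise argument by the $L^2$-separation (\ref{q*}), (\ref{compact}). This is a legitimate alternative; the Cauchy--Schwarz count of type switches and of large-$s_j$ gaps, and the bound $\sum_j s_j\le\int_B\|u_x\|\le|B|^{1/2}(2C_0)^{1/2}$, are all correct.

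There is, however, a real gap in the key step ``phase jump $\le Cs_j$ across same-type small-$s_j$ gaps''. With $G=\{q<q^0\}$, the gap endpoints $a_j,b_j$ sit exactly on $\{q=q^0\}$, the outer boundary of the tube where the projection of Lemma~\ref{lemmaw} is defined. To integrate (\ref{derivative}) you need a path from $u(a_j,\cdot)$ to $u(b_j,\cdot)$ staying in the tube, and the straight line in $L^2$ can reach distance $q^0+s_j>q^0$ from the manifold. The elementary alternative, using the curvature estimate (\ref{q-qeta}) at $b_j$ with $r=h(a_j)$, gives only $(s_j+q^0)^2\ge (q^0)^2+\tfrac12\|\bar u'\|^2|h(a_j)-h(b_j)|^2$, i.e.\ phase jump $\lesssim\sqrt{s_j q^0}$, which does \emph{not} sum over the possibly countably many gaps (e.g.\ $s_j\sim j^{-2}$). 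The remedy is to leave a buffer: take the threshold defining $G$ strictly below $q^0$ (e.g.\ $\{q<q^0/2\}$, or simply adopt the paper's $S_p$, on which $q\le p<q^0$). Then for $s_j$ small the straight line stays within $q^0$ of a fixed translate, (\ref{derivative}) integrates, and $|h(a_j)-h(b_j)|\le Cs_j$ holds; the rest of your telescoping argument then closes. Two minor remarks: first, (\ref{vxW}) and (\ref{vyW}) are stated on $S_p$ via the $W^{1,2}$-closeness from Lemma~\ref{lemmaw1}; on your $G$ you should instead derive $\|v_y\|^2\le\|v\|\,\|v_{yy}\|\le\bar C_1 q$ from (\ref{identity}) and (\ref{v-y-der}), which gives the same conclusions. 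Second, $G$ has countably many (not finitely many) components, so ``$G_0,\dots,G_m$'' should be read as a countable family, which is fine for the argument but worth stating.
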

\begin{proof}
 $\tilde{S}_{p}$ is the union of a countable family of intervals $\tilde{S}_{p}=\cup_j(\alpha_j,\beta_j)$. Therefore we have
\begin{equation}\label{etabar-exp}
\vert\bar{\eta}\vert\leq\int_{S_{p}}\vert h^\prime\vert{d} x+\sum_j\vert h(\beta_j)-h(\alpha_j)\vert.
\end{equation}
Let $\lambda>0$ a small number to the chosen later and set $I_\lambda=\{j:\beta_j-\alpha_j\leq\lambda\}$, $\tilde{I}_\lambda=\{j:\beta_j-\alpha_j>\lambda\}$. Note that $\tilde{I}_\lambda$ contains at most $\frac{\vert\tilde{S}_{p}\vert}{\lambda}\leq\frac{2C_0}{\lambda e_{p}}$ elements. For $j\in I_\lambda$ and $\xi\in(\alpha_j,\beta_j)$ we have
\[
\vert u(\xi,y)-u(\alpha_j,y)\vert\leq\int_{\alpha_j}^\xi\vert u_x(x,y)\vert {d} x\leq\vert\xi-\alpha_j\vert^\frac{1}{2}(\int_{\alpha_j}^\xi\vert u_x(x,y)\vert^2 {d} x)^\frac{1}{2},\]
and therefore
\[\int_\R\vert u(\xi,y)-u(\alpha_j,y)\vert^2{d} y\leq\vert\beta_j-\alpha_j\vert\int_{\alpha_j}^{\beta_j}\int_\R\vert u_x(x,y)\vert^2{d} y{d} x\leq\lambda C_0
\]
where $C_0$ is the constant in Lemma \ref{ul-bounds}.
From this estimate and (\ref{pLessq1}) that implies
\[\|u(\alpha_j,\cdot)-\bar{u}(\cdot-h(\alpha_j))\|\leq {p}\leq q^0\]
 it follows that, if ${p}$ and $\lambda$ are sufficiently small, then, for each $x\in(\alpha_j,\beta_j)$, $u(x,\cdot)$ satisfies the conditions in Lemma \ref{lemmaw} ensuring that $\bar{u}$ and $h(x)$ are uniquely determined and either $\bar{u}=\bar{u}_-$ or $\bar{u}=\bar{u}_+$ for every $x\in[\alpha_j,\beta_j]$. Moreover ${h}$ is a smooth function of $u(x,\cdot)$ and, from (\ref{derivative}),  we have
\[{h}^\prime(x)=\frac{\langle u_x(x,\cdot),\bar{u}^\prime(\cdot-{h}(x))\rangle}{\|\bar{u}^\prime\|^2+\langle u(x,\cdot)-\bar{u}(\cdot-{h}(x)),\bar{u}^{\prime\prime}(\cdot-{h}(x))\rangle},\]
which implies
\begin{equation*}
\vert{h}^\prime(x)\vert\leq C\|u_x(x,\cdot)\|,\quad\text{ for }\;\;x\in[\alpha_j,\beta_j],\;j\in I_\lambda.
\end{equation*}

\noindent
Therefore we have
\begin{equation}\label{first-s}
\begin{split}
&\sum_{j\in I_\lambda}\vert h(\beta_j)-h(\alpha_j)\vert\leq
\int_{\cup_{j\in I_\lambda}[\alpha_j,\beta_j]}\vert{h}^\prime(x)\vert {d} x\\
&\leq C\int_{\cup_{j\in I_\lambda}[\alpha_j,\beta_j]}\|u_x\| {d} x\leq C\vert\tilde{S}_{p}\vert^\frac{1}{2}(\int_0^L\|u_x\|^2 {d} x)^\frac{1}{2}\leq (2C_0)^\frac{1}{2}C\vert\tilde{S}_{p}\vert^\frac{1}{2}.
\end{split}
\end{equation}
Assume now $j\in\tilde{I}_\lambda$ and observe that, (\ref{n-bound}) implies  that there is $\bar{y}>0$ such that, for $\mathrm{p},\mathrm{q}\in\{-,+\}$
\begin{equation}\label{elem-observ}
\begin{split}
&\vert\bar{u}_\mathrm{p}(y)-\bar{u}_\mathrm{q}(y-r)\vert\geq \vert a\vert,\;\;\text{ for }
\bar{y}\leq y\leq r-\bar{y},\;\text{ if }\;r\geq 2\bar{y},\\
&\vert\bar{u}_\mathrm{p}(y)-\bar{u}_\mathrm{q}(y-r)\vert\geq \vert a\vert,\;\;\text{ for }
 r+\bar{y}\leq y\leq-\bar{y},\;\text{ if }\;r\leq-2\bar{y},
\end{split}
\end{equation}
where as before $a=\frac{a_+-a_-}{2}$.

Consider first the indices $j\in\tilde{I}_\lambda$ such that $\vert h(\beta_j)-h(\alpha_j)\vert\leq 4\bar{y}$. We have
\begin{equation}\label{secon-s}
\sum_{j\in\tilde{I}_\lambda,\vert h(\beta_j)-h(\alpha_j)\vert\leq4\bar{y}}\vert h(\beta_j)-h(\alpha_j)\vert\leq4\bar{y}
\frac{\vert\tilde{S}_{p}\vert}{\lambda}.
\end{equation}
If $r>4\bar{y}$ the interval $(\bar{y},r-\bar{y})$ (if $r<-4\bar{y}$ the interval $(r+\bar{y},-\bar{y})$) has measure larger then $\frac{\vert r\vert}{2}$.
Therefore, for each $j\in\tilde{I}_\lambda$ with $\vert h(\beta_j)-h(\alpha_j)\vert>4\bar{y}$, there are $y_j^0,y_j^1$, $ y_j^1-y_j^0=\vert h(\beta_j)-h(\alpha_j)\vert/2$ such that
\begin{equation}\label{bigger-gamma}
\begin{split}
&\vert u(\beta_j,y)-u(\alpha_j,y)\vert
\geq\vert\bar{u}_{\mathrm{p}_{\beta_j}}(y-h(\beta_j))-\bar{u}_{\mathrm{p}_{\alpha_j}}(y-h(\alpha_j))\vert\\
&-\vert u(\beta_j,y)-\bar{u}_{\mathrm{p}_{\beta_j}}(y-h(\beta_j))\vert
-\vert u(\alpha_j,y)-\bar{u}_{\mathrm{p}_{\alpha_j}}(y-h(\alpha_j))\vert\\
&=\vert\bar{u}_{\mathrm{p}_{\beta_j}}(y-h(\beta_j))-\bar{u}_{\mathrm{p}_{\alpha_j}}(y-h(\alpha_j))\vert\\
&-\vert v(\beta_j,y-h(\beta_j))\vert
-\vert v(\alpha_j,y-h(\alpha_j))\vert\\
&\geq\vert\bar{u}_{\mathrm{p}_{\beta_j}}(y-h(\beta_j))-\bar{u}_{\mathrm{p}_{\alpha_j}}(y-h(\alpha_j))\vert-2(2\bar{C})^\frac{1}{2}p^\frac{3}{4}\\
&\geq\vert a\vert-2(2\bar{C})^\frac{1}{2}p^\frac{3}{4}\geq\frac{\vert a\vert}{2},\quad\text{ for }\;\;y\in(y_j^0,y_j^1).
\end{split}
\end{equation}
where we have also used (\ref{pLessq1}) and (\ref{vy}) that imply
\[\|v(x,\cdot)\|_{L^\infty(\R;\R^m)}\leq 2^\frac{1}{2}\|v(x,\cdot)\|^\frac{1}{2}\|v_y(x,\cdot)\|^\frac{1}{2}< (2\bar{C})^\frac{1}{2}p^\frac{3}{4},\;\;x\in S_{p},\]
and assumed $p$ small. Integrating (\ref{bigger-gamma}) in $(y_j^0,y_j^1)$ yields
\[\begin{split}
&\frac{\vert a\vert}{4}\vert h(\beta_j)-h(\alpha_j)\vert\leq\int_{y_j^0}^{y_j^1}\vert u(\beta_j,y)-u(\alpha_j,y)\vert{d} y
\leq\int_{y_j^0}^{y_j^1}\int_{\alpha_j}^{\beta_j}\vert u_x\vert{d} x{d} y\\
&\leq\frac{1}{2^\frac{1}{2}}\vert h(\beta_j)-h(\alpha_j)\vert^\frac{1}{2}(\beta_j-\alpha_j)^\frac{1}{2}
\Big(\int_{y_j^0}^{y_j^1}\int_{\alpha_j}^{\beta_j}\vert u_x\vert^2{d} x{d} y\Big)^\frac{1}{2}\\
&\leq\vert h(\beta_j)-h(\alpha_j)\vert^\frac{1}{2}(\beta_j-\alpha_j)^\frac{1}{2}\frac{C_0^\frac{1}{2}}{2^\frac{1}{2}}
\end{split}\]
where $C_0$ is the constant in Lemma \ref{ul-bounds}. It follows
\begin{equation}\label{third-s0}
\vert h(\beta_j)-h(\alpha_j)\vert\leq\frac{8 C_0}{\vert a\vert^2}(\beta_j-\alpha_j)
\end{equation}
and in turn
\begin{equation}\label{third-s}
\begin{split}
&\sum_{j\in\tilde{I}_\lambda,\vert h(\beta_j)-h(\alpha_j)\vert>4\bar{y}}\vert h(\beta_j)-h(\alpha_j)\vert\\
&\leq\frac{8 C_0}{\vert a\vert^2}\sum_{j\in\tilde{I}_\lambda,\vert h(\beta_j)-h(\alpha_j)\vert>4\bar{y}}(\beta_j-\alpha_j)\leq\frac{8 C_0}{\vert a\vert^2}\vert\tilde{S}_{p}\vert.
\end{split}
\end{equation}
From (\ref{first-s}), (\ref{secon-s}) and (\ref{third-s}) we conclude that the summation on the right hand side of (\ref{etabar-exp}) is bounded by a constant independent of $L$. It remains to estimate the integral $\int_{S_{p}}\vert h^\prime\vert{d} x$. To do this we use the expression (\ref{hprime}) of $h^\prime$ that implies
\begin{equation}\label{h-abs}
\vert h^\prime(x)\vert\leq\frac{\|v_x(x,\cdot)\|\|v_y(x,\cdot)\|}{\|\bar{u}^\prime+v_y(x,\cdot)\|^2}.
\end{equation}
From (\ref{vy}) we have
\[\|v_y(x,\cdot)\|\leq\frac{1}{2}\|\bar{u}^\prime\|,\;\;x\in S_{p}\]
provided ${p}\leq\frac{\|\bar{u}^\prime\|^2}{4\bar{C}}$. Under this assumption (\ref{h-abs}) and (\ref{vxW}) and (\ref{vyW}) imply
\begin{equation}\label{h-abs1}
\vert h^\prime(x)\vert\leq\frac{2^\frac{1}{2}4\|u_x(x,\cdot)\|^2}{\mu^\frac{1}{2}\|\bar{u}^\prime\|^2}.
\end{equation}
From this estimate we finally obtain
\[
\begin{split}
&\int_{S_{p}}\vert h^\prime\vert{d} x\leq\frac{2^\frac{1}{2}4}{\mu^\frac{1}{2}\|\bar{u}^\prime\|^2}\int_{S_0}\|u_x(x,\cdot)\|^2{d} x)\\
&\leq2\frac{2^\frac{1}{2}4}{\mu^\frac{1}{2}\|\bar{u}^\prime\|^2}C_0.
\end{split}\]
This concludes the proof.
\end{proof}
We are now in the position of refining
 Lemma \ref{first-exp-decay}.
\begin{lemma}\label{exp-deriv}
 There exist constants $k, K>0$ independent of $L>1$ and such that $u=u^L$ satisfies
\begin{equation}\label{exp-decay-3}
\begin{split}
&\vert u(x,y)-a_+\vert\leq K e^{-ky},\quad\text{ for }\;\;y\geq 0,\\
&\vert u(x,y)-a_-\vert\leq K e^{ky},\quad\text{ for }\;\;y\leq 0,
\end{split}
\end{equation}
and
\begin{equation}\label{exp-decay3}
\begin{split}
&\vert (D^\alpha{u})(x,y)\vert\leq K e^{-k\vert y\vert},\;\;\text{ for }\;\; y\in\R, \\
&\|D^\alpha{u}(x,\cdot)\|\leq\frac{K}{\sqrt{2k}},\;\;\text{ for }\;\; x\in[0,L],
\end{split}
\end{equation}
for $\alpha\in\N^2$, $1\leq\vert\alpha\vert\leq 2$.

Moreover, if
\[\min_{\mathrm{p}\in\{-,+\}}\min
_{r\in\R}\|u(x,\cdot)-\bar{u}_{\mathrm{p}}(\cdot-{r})\|\leq q^0\]
and $q^0>0$ is sufficiently small, there is a constant $\bar{C}_1>0$ independent of $L>1$ such that
\begin{equation}\label{v-y-der}
\|D_y^iv(x,\cdot)\|\leq\bar{C}_1,\quad i=0,1,2,
\end{equation}
where $v=v^L$ is defined as in (\ref{vL}).
\end{lemma}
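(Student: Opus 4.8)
My plan is to bootstrap Lemma~\ref{first-exp-decay}. That lemma already gives exponential decay of $u=u^L$ to $a_\pm$, but only from the level $y_L$, which a priori grows with $L$; the point is to show that $y_L$ may be replaced by a height $Y$ independent of $L$. Granting this, the $\cosh$-barrier argument of Lemma~\ref{first-exp-decay} applies verbatim with $Y$ in place of $y_L$ and with the lateral bounds now uniform: by Proposition~\ref{eta-bounded} we have $\vert\bar\eta\vert\le C$, so (\ref{n-bound}) gives $\vert u(0,y)-a_+\vert=\vert\bar u_-(y)-a_+\vert$ and $\vert u(L,y)-a_+\vert=\vert\bar u_+(y-\bar\eta)-a_+\vert$ bounded by $\bar K e^{\bar k C}e^{-\bar k y}$ for $y\ge 0$, and similarly near $a_-$. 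Absorbing the crude bound $\vert u-a_\pm\vert\le 2M$ on the compact range $\vert y\vert\le Y$ into the constant $K$ yields (\ref{exp-decay-3}); then (\ref{exp-decay3}) follows from (\ref{exp-decay-3}), the smoothness of $W$ and of the (now uniformly decaying) boundary data, interior elliptic estimates and differentiation of the system; and (\ref{v-y-der}) follows from (\ref{exp-decay3}), (\ref{n-bound}) and the bound $\vert h(x)\vert\le C$, obtained from (\ref{exp-decay-3}) exactly as in Lemma~\ref{hprime-hpprime?}.

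So everything comes down to producing $Y$ independent of $L$ with $\vert u(x,y)-a_+\vert\le r_0$ for every $x\in[0,L]$ and $y\ge Y$, and symmetrically near $a_-$. On the good set $S_p$ this is immediate: by the estimates already made in the proof of Proposition~\ref{eta-bounded}, $\vert h(x)\vert\le C$ for every $x\in S_p$ (since $h(0)=0$ and both $\int_{S_p}\vert h'\vert$ and the total jump of $h$ across $\tilde S_p$ are bounded independently of $L$); hence from the decomposition $u(x,\cdot)=\bar u_{\mathrm{p}_x}(\cdot-h(x))+v(x,\cdot-h(x))$ with $\Vert v(x,\cdot)\Vert\le p$, the interpolation $\Vert v(x,\cdot)\Vert_{L^\infty}\le C\Vert v(x,\cdot)\Vert^{1/2}\Vert v_y(x,\cdot)\Vert^{1/2}\le Cp^{3/4}$ (Lemma~\ref{L2mpliesLinfty}, (\ref{vy})) and (\ref{n-bound}), one gets $\vert u(x,y)-a_+\vert\le\bar K e^{\bar k C}e^{-\bar k y}+Cp^{3/4}\le r_0/2$ for $x\in S_p$, $y\ge Y_1$, after fixing first $p$ small and then $Y_1$ large --- all uniformly in $L$.

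The real difficulty, and the main obstacle, is the complementary set $\tilde S_p$, which by (\ref{esse-tilde}) is a finite union of intervals each of length at most $2C_0/e_p$. The strategy is to pit two $L$-independent facts against each other: the bound $\int_0^L\int_\R\vert u_x\vert^2\,{d} x\,{d} y\le 2C_0$ of Lemma~\ref{ul-bounds} --- which, crucially, does \emph{not} grow with $L$, unlike $\int_0^L\int_\R W(u)$ --- and the minimality of $u$. Suppose $\vert u(x_0,y_0)-a_+\vert>r_0$ with $x_0$ in a component $(\alpha_j,\beta_j)$ of $\tilde S_p$, of length $\le 2C_0/e_p$; by Step~2, $\vert u(\cdot,y)-a_+\vert\le r_0/2$ at the endpoints $x=\alpha_j,\beta_j$ for $y\ge Y_1$, and by the uniform estimate (\ref{smoothness}) $\vert u-a_+\vert>r_0/2$ on an $L$-independent disc around $(x_0,y_0)$. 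If $u(x_0,\cdot)$ stays far from $a_+$ over a long $y$-interval, then at each such level the slice $x\mapsto u(\cdot,y)$ moves by $\ge r_0/2$ across the narrow window $(\alpha_j,\beta_j)$, whence $\int_{\alpha_j}^{\beta_j}\vert u_x(\cdot,y)\vert^2\,{d} x\ge (r_0/2)^2 e_p/(2C_0)$ there; integrating and invoking the global bound $2C_0$ caps the length of such an interval, and pairing this with the $L^2$-separation (\ref{compact}) of distant translates --- applied to $u(x_0,\cdot)$, which lies within an $L$-independent $L^2$-distance of the bounded-shift profile $\bar u_{\mathrm{p}}(\cdot-h(\alpha_j))$, a fact inherited from $\alpha_j\in\overline{S_p}$ via Cauchy--Schwarz and the same $\int\vert u_x\vert^2$ bound --- pins the location of the excursion below an $L$-independent height. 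The complementary alternative, in which $u(x_0,\cdot)$ departs from a neighbourhood of $a_+$ only over a short $y$-window, produces (again by (\ref{smoothness}) and Step~2) a localized pocket surrounded by the region $\{\vert u-a_+\vert<r_0\}$, which the minimality of $u$ forbids: replacing $u$ on a slightly larger region by the truncation $a_++\min(\vert u-a_+\vert,r_0/2)\,(u-a_+)/\vert u-a_+\vert$, or invoking the Cut-Off Lemma of \cite{af3}, strictly lowers $\mathcal{J}$. Turning this dichotomy into a rigorous argument, with robustly chosen thresholds handling the interplay between the size of the pocket, the width $2C_0/e_p$ of the bad intervals and the fixed radius $r_0$, is the technical heart of the proof; once it is in place, $Y$ is bounded independently of $L$ and the lemma follows as described in the first paragraph.
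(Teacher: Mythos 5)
Your overall plan is the paper's plan: extract an $L$-independent height $Y$ past which $\vert u-a_+\vert\le r_0$, then rerun the $\cosh$-barrier of Lemma~\ref{first-exp-decay} with uniform constants, then get (\ref{exp-decay3}) by elliptic estimates and (\ref{v-y-der}) from the resulting $L$-independent bound on $h$. Your step on $S_p$ (uniform $\vert h(x)\vert\le C_h$ from the ingredients of Proposition~\ref{eta-bounded}, plus $\Vert v(x,\cdot)\Vert_{L^\infty}\le Cp^{2/3}$ via Lemma~\ref{L2mpliesLinfty}, plus (\ref{n-bound})) is exactly what the paper does. But the part you acknowledge as "the technical heart" --- controlling $\tilde S_p$ --- is where the proposal does not close, and where it also deviates from the cleanest route.

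The paper does not run a dichotomy over excursions of a single slice $u(x_0,\cdot)$; it bounds, uniformly in $L$, the Lebesgue measure of the set of \emph{bad levels}
\[Y_r:=\{y\ge C_h+y_r:\ \vert u(x_y,y)-a_+\vert\ge r\ \text{for some}\ x_y\in(0,L)\}.\]
For $y\ge C_h+y_r$ the slice is already within $r/2$ of $a_+$ at every $x\in S_p$ (your Step~2), so any $x_y$ with $\vert u(x_y,y)-a_+\vert\ge r$ must lie in a component $(\alpha,\beta)\subset\tilde S_p$; the crossing estimate $\tfrac{r}{2}\le\vert\beta-\alpha\vert^{1/2}(\int_\alpha^\beta\vert u_x\vert^2)^{1/2}$ then gives, after integrating over $Y_r$ and using Lemma~\ref{ul-bounds} together with $\vert\tilde S_p\vert\le 2C_0/e_p$, the uniform bound $\vert Y_r\vert\le 4C_0\vert\tilde S_p\vert/r^2$. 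This immediately produces a good level $y_1\le C_h+y_r+\vert Y_r\vert$ (independent of $L$) on which $\vert u(\cdot,y_1)-a_+\vert<r$ for every $x\in[0,L]$; together with the lateral boundary bounds at $x=0,L$ (using $\vert\bar\eta\vert\le C_h$), the Cut-Off Lemma of \cite{af3} applied to the minimizer then forces $\vert u-a_+\vert\le r$ for all $x\in[0,L]$ and $y\ge y_1$, with no case distinction needed.

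Your proposal, instead of a single measure estimate in $y$, splits into two alternatives ("long excursion of $u(x_0,\cdot)$" versus "short, localized pocket"), inserts an additional "$L^2$-separation of distant translates to pin the location of the excursion" step, and handles the second case by a hand-built truncation or the Cut-Off Lemma. As written, the two cases are neither cleanly defined nor shown to be exhaustive, the "pinning" step is not connected to a concrete inequality, and you explicitly defer the reconciliation of thresholds ($r_0$, $p$, $2C_0/e_p$, pocket size) to a later argument you do not supply. That is precisely the gap: the measure bound on $Y_r$ is the missing idea that collapses the dichotomy into a single step; once you replace your third paragraph with it, the rest of your outline is correct and matches the paper.
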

\begin{proof}
From the proof of Proposition \ref{eta-bounded} we have
\begin{equation}
\label{h-Ch}
\vert h(x)\vert\leq C_h,\;\;x\in S_p,
\end{equation}
for some $C_h>0$ independently of $L\geq 1$ and in particular $\vert h(L)\vert=\vert\bar{\eta}\vert\leq C_h$. Since $u=u^L$ satisfies (\ref{smoothness}) and $\bar{u}_\pm$ is bounded in $C^1(\R;\R^m)$, on the basis of Lemma \ref{L2mpliesLinfty}, we can assume that $p>0$ has been chosen so small that
\begin{equation}\label{LinftyB}
\|u(x,\cdot)-\bar{u}_{\mathrm{p}_{x}}(\cdot-h(x))\|_{L^\infty(\R;\R^m)}\leq C p^\frac{2}{3}\leq\frac{r}{4},\;\;x\in S_p,
\end{equation}
for some $r\in(0,\frac{r_0}{2}]$ with $r_0$ as in (\ref{second-derW}).

 Define
\[Y_r:=\{y\geq C_h+y_r: \vert u(x_y,y)-a_+\vert\geq r\;\;\text{for some}\;\;x_y\in(0,L)\},\]
where $y_r>0$ is such that
\begin{equation}\label{yr}
\vert\bar{u}_\pm(y)-a_+\vert\leq\frac{r}{4},\;\;\text{for}\;\;y\geq y_r.
\end{equation}
Note that this and (\ref{h-Ch}) implies
\[\vert\bar{u}_{\mathrm{p}_{x}}(y-h(x))-a_+\vert\leq\frac{r}{4},\;\;\text{for}\;\;y\geq C_h+ y_r.\]
Then (\ref{LinftyB}) yields
\begin{equation}\label{uL-a}
\begin{split}
&\vert u(x,y)-a_+\vert\leq\vert u(x,y)-\bar{u}_{\mathrm{p}_{x}}(y-h(x))\vert
+\vert\bar{u}_{\mathrm{p}_{x}}(y-h(x))-a_+\vert\\
&\leq\frac{r}{2},\;\;x\in S_p,\;y\geq C_h+y_r.
\end{split}
\end{equation}
This inequality shows that $y\in Y_r$ implies that $x_y$ belongs to $\tilde{S}_p$ and therefore to one of the intervals, say $(\alpha,\beta)$, that compose $\tilde{S}_p$. This and (\ref{uL-a}) computed with for $x=\alpha$ yield
\[\vert u(x_y,y)-u(\alpha,y)\vert\geq\vert u(x_y,y)-a_+\vert-\vert u(\alpha,y)-a_+\vert\geq\frac{r}{2},\;\;y\in Y_r.\]
It follows
\[\frac{r}{2}\leq\int_\alpha^{x_y}\vert u_x(x,y)\vert {d} x
\leq\vert\beta-\alpha\vert^\frac{1}{2}\Big(\int_\alpha^\beta\vert u_x(x,y)\vert^2 {d} x\Big)^\frac{1}{2},\]
and in turn Lemma \ref{ul-bounds} implies
\[\vert Y_r\vert\frac{r^2}{4}\leq\vert\tilde{S}_p\vert\int_{\tilde{S}_p}\int_\alpha^\beta\vert u_x(x,y)\vert^2{d} x{d} y\leq C_0\vert\tilde{S}_p\vert,\]
and we have that the measure of $Y_r$ is bounded independently of $L>1$. This shows that there exists an increasing sequence $y_j\rightarrow+\infty$ such that
\[\begin{split}
&y_1\leq C_h+y_r+\vert Y_r\vert,\\
&\vert u(x,y_j)-a_+\vert<r,\;\;x\in[0,L],\;\;j=1,\ldots
\end{split}\]
Therefore, using also (\ref{yr}) that implies
\[\vert u(L,y)-a_+\vert=\vert\bar{u}_+(y-\bar{\eta})-a_+\vert\leq\frac{r}{4},\;\;y\geq C_h+y_r,\]
we can argue as in the proof of Lemma \ref{near-a} and conclude with the help of the Cut-Off Lemma in \cite{af3} that
\[\vert u(x,y)-a_+\vert\leq r,\;\;\text{for}\;\;x\in[0,L],\;\;y\geq y_1.\]
Then the argument in the proof of Lemma \ref{first-exp-decay} yields
\[\vert u(x,y)-a_+\vert\leq r e^{-ky},\;\;y\geq y_1,\]
with $k>0$ independent of $L>1$. The same reasoning show that
\[\vert u(x,y)-a_-\vert\leq r e^{-k\vert y\vert},\;\;y\leq-y_1.\]
This and the fact that $u$ is bounded imply (\ref{exp-decay-3}).
 Once (\ref{exp-decay-3}) is established (\ref{exp-decay3}) follow from elliptic theory.
To prove (\ref{v-y-der}) we observe that (\ref{exp-decay-3}) implies that each solution $\bar{u}\in\{\bar{u}_-,\bar{u}_+\}$, ${h}(x)$ of
\[\|u(x,\cdot)-\bar{u}(\cdot-{h}(x))\|=\min_{\mathrm{p}\in\{-,+\}}\min
_{r\in\R}\|u(x,\cdot)-\bar{u}_{\mathrm{p}}(\cdot-{r})\|,\]
satisfies
\[\vert{h}(x)\vert\leq C_h^\prime,\quad x\in(0,L),\]
for some constant $C_h^\prime>0$ independent of $L>1$. Since $h(x)$ is uniquely determined for $q^0>0$ small, this and (\ref{exp-decay3}) imply that, possibly after adjusting the value of $K$, we have
\[\vert D_y^i({u}(x,y+{h}(x)))\vert\leq K e^{-k\vert y\vert},\;\;x\in[0,L],\; y\in\R,\;i=1,2.\]
Then (\ref{v-y-der}) follows from
\[(D_y^i{v})(x,y)=D_y^i({u}(x,y+{h}(x))-\bar{u}(y)),\;\;i=1,2.
\]
and (\ref{n-bound}). The proof is complete.
\end{proof}

Next, with ${p}\in(0,q^0]$ as before, we focus on the sets
\[\Sigma_\beta:=\{x\in[0,L]:q(x)\leq\beta{p}\}\] for $\beta=1$ and $\beta=\frac{1}{2}$ and show that $\Sigma_{\frac{1}{2}}$ has the simplest possible structure
\begin{proposition}\label{key}
Set $u=u^L$. Then, provided ${p}\in(0,q^0]$ is sufficiently small, there exist $0<l_-<l_+<L$ such that:
\begin{equation}\label{sigma-st}
\Sigma_{\frac{1}{2}}=[0,l_-]\cup[l_+,L]
\end{equation} and
\begin{equation}\label{fixed-diff1}
l_+-l_-\leq C,
\end{equation}
where $C>0$ is a constant independent of $L>1$.
For $x\in[0,l_-]$, the map $\bar{u}\in\{\bar{u}_-,\bar{u}_+\}$ in the representation (\ref{vL}) coincides with $\bar{u}_-$ and for $x\in[l_+,L]$ with $\bar{u}_+$. Moreover the map $x\rightarrow q(x)$ is nondecreasing in $[0,l_-]$ and non increasing in $[l_+,L]$.
\end{proposition}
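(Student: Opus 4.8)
The plan is to combine the ``no interior maximum'' property of the function $q$ (Lemma~\ref{NoMax}, in the free--boundary form of Remark~\ref{TransPoss}) with the boundary values $q(0)=q(L)=0$, and to dispose of extra components of $\Sigma_{1/2}$ by cut--and--paste competitors of the type used at the end of the proof of Theorem~\ref{main}.

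First I would record the easy structural facts. By continuity of $q$ the sets $\Sigma_\beta$ are closed, and the boundary conditions $u^L(\cdot,0)=\bar u_-$, $u^L(\cdot,L)=\bar u_+(\cdot-\bar\eta)$ give $q(0)=q(L)=0$, so $0,L\in\Sigma_{1/2}$. For ${p}>0$ small the ${p}$--neighbourhoods of the two manifolds $\{\bar u_\pm(\cdot-r):r\in\R\}$ are disjoint, so on $\Sigma_1\supseteq\Sigma_{1/2}$ the decomposition (\ref{vL}) has a well defined label $\mathrm p_x\in\{-,+\}$, which is locally constant and hence constant on every connected component of $\Sigma_1$; it equals $-$ on the component containing $0$ and $+$ on the one containing $L$. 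Next, Remark~\ref{TransPoss} shows that $q$ has no interior point of maximum on any subinterval of $[0,L]$ on which $q\le q^0$; combined with $q(0)=0$ this forces $q$ to be nondecreasing on the component of $\{q\le p/2\}$ containing $0$, and symmetrically nonincreasing on the one containing $L$. Setting $l_-:=\sup\{x:q\le p/2\text{ on }[0,x]\}$ and $l_+:=\inf\{x:q\le p/2\text{ on }[x,L]\}$, the impossibility of having $q\le p/2$ on all of $[0,L]$ (which would force $\mathrm p$ to be constant) gives $0<l_+$, $l_-<L$ and $l_-<l_+$; moreover $q(l_\pm)=p/2$, $q$ is nondecreasing on $[0,l_-]$ and nonincreasing on $[l_+,L]$, the label is $-$ on $[0,l_-]$ and $+$ on $[l_+,L]$, and $[0,l_-]\cup[l_+,L]\subseteq\Sigma_{1/2}$.

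The heart of the matter is the reverse inclusion, i.e.\ that $q>p/2$ on $(l_-,l_+)$, equivalently that $\Sigma_{1/2}$ has exactly two components $C_1,\dots,C_n$. Suppose $n>2$. Any ``gap'' between two consecutive components must contain a point with $q>q^0$: otherwise $q$ would attain on the closed gap an interior maximum, its endpoint values being $p/2$ and strictly smaller than $q$ inside, contradicting Remark~\ref{TransPoss}. If some two consecutive components carry the same label $\mathrm p_\bullet$, then over the gap between them -- where $J_\R(u(x,\cdot))-c_0\ge e_{q^0}$ on the set $\{q\ge q^0\}$ by Lemma~\ref{away} -- I run exactly the competitor of the argument ruling out (\ref{Out}) at the end of the proof of Theorem~\ref{main}: a constant bridge $\bar u_{\mathrm p_\bullet}(\cdot-\hat h)$ over $\{q\ge q^0\}$, the test maps (\ref{test}) with $\hat q=q^0-q$ on the two flanking pieces where $q\in(q^0/2,q^0)$ (there $\mathcal W(\hat q,\nu)<\mathcal W(q,\nu)$ and $f(\hat q,x)\le f(q,x)$ while $\hat q_x^2=q_x^2$, by Lemma~\ref{lemmaw-true} and Lemma~\ref{represent}), and a translation correction on a long tail whose cost tends to $0$ by the cancellation of Lemma~\ref{lemma=0}; the competitor is strictly cheaper, contradicting minimality. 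Otherwise the labels of consecutive components strictly alternate; since the first is $-$ and the last is $+$ this forces $n$ even, hence $n\ge4$, so some component $C_i$ and the next--but--one $C_{i+2}$ share a label, with $C_{i+1}$ (of the opposite label) in between. I then excise this ``round trip'' of $u$ by the same kind of construction: on short junction pieces adjacent to $C_i$ and $C_{i+2}$ a test map of the form (\ref{test}) carrying $\hat q$ down to $0$, a constant translate $\bar u(\cdot-\hat h)$ of the common connection along the whole long middle stretch (so that it contributes nothing to the energy), and a translation correction on a long tail absorbing the unavoidable mismatch of the $h$--values, which is bounded by $2\sup\{|h(x)|:q(x)\le q^0\}$, a quantity that is $O(1)$ uniformly in $L$ by Proposition~\ref{eta-bounded} and its proof. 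Again $\mathcal J$ strictly decreases, a contradiction, so $n=2$.

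The delicate point, and the one I expect to be the main obstacle, is the energy accounting in the alternating case: one must check that the fixed positive gain from the excised excursions -- bounded below in terms of $e_{q^0}$ and $\mu$ via Lemma~\ref{away}, (\ref{W2geqq2-i}) and the Cauchy--Schwarz bound $\int q_x^2\ge(\Delta q)^2/\mathrm{length}$ along each gap -- strictly exceeds the total cost of the paste, the latter being made small by keeping $\hat q\le p/2$ on the short junction pieces, by keeping the bridge a fixed translate of a heteroclinic (so it pays exactly $c_0$ per unit length and nothing more), and by pushing the bounded translation mismatch onto a tail of arbitrary length. Once $q>p/2$ on $(l_-,l_+)$ is established, the bound $l_+-l_-\le C$ is immediate: for each $x\in(l_-,l_+)$ either $p/2<q(x)\le q^0$, in which case $J_\R(u(x,\cdot))-c_0=\mathcal W(v(x,\cdot))\ge\tfrac12\mu q(x)^2\ge\tfrac12\mu(p/2)^2$ by (\ref{W2geqq2-i}), or $q(x)>q^0$, in which case $J_\R(u(x,\cdot))-c_0\ge e_{q^0}$ by Lemma~\ref{away}; hence $J_\R(u(x,\cdot))-c_0\ge\varepsilon_p:=\min\{\tfrac12\mu(p/2)^2,e_{q^0}\}>0$ on $(l_-,l_+)$, and since $\int_0^L(J_\R(u(x,\cdot))-c_0)\,dx\le\mathcal J(u^L)-c_0L\le C_0$ by Lemma~\ref{ul-bounds}, we get $l_+-l_-\le C_0/\varepsilon_p$, a constant independent of $L$, which completes the proof.
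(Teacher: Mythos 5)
Your plan is essentially the paper's argument: monotonicity of $q$ on intervals where $q\le q^0$ from Lemma~\ref{NoMax}/Remark~\ref{TransPoss}, a cut--and--paste competitor (as at the end of the proof of Theorem~\ref{main}, in the free--boundary form) to forbid large excursions, and the lower bound $J_\R(u(x,\cdot))-c_0\ge\varepsilon_p$ on $(l_-,l_+)$ combined with Lemma~\ref{ul-bounds} to get $l_+-l_-\le C$. The last step is fine (the paper gets the lower bound directly from Lemma~\ref{away} with parameter $p/2$, you split into $q\le q^0$ and $q>q^0$ and use \eqref{W2geqq2-i} and Lemma~\ref{away}; both give a constant independent of $L$).

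Where you diverge, and where a genuine gap appears, is in the choice of $l_\pm$ and the resulting case analysis. The paper defines $l_-:=\max\{x\in\Sigma_{1/2}:\bar u=\bar u_-\}$, $l_+:=\min\{x\in\Sigma_{1/2}:\bar u=\bar u_+\}$; with these definitions the inclusion $\Sigma_{1/2}\subset[0,l_-]\cup[l_+,L]$ is tautological, and one needs only the reverse inclusion. When one then supposes $[l_+,L]\not\subset\Sigma_{1/2}$ and goes to case~b), the excised stretch $(\tilde\xi_1,\tilde\xi_2)$ automatically has label $+$ at \emph{both} endpoints (because $[l_+,\tilde\xi_1]$ and $[\tilde\xi_2,L]$ lie in $\Sigma_1$ and contain $l_+$ and $L$ respectively), so \emph{one} cut--and--paste — with the reflection $\hat q=p-q$ on the flanking pieces, and a rigid translation on $[\xi_2,L]$ permitted by the free parameter $\eta$ — kills everything in between at once, including components of $\Sigma_{1/2}$ carrying the opposite label. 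You instead define $l_\pm$ as the endpoints of the components of $\Sigma_{1/2}$ touching $0$ and $L$, which forces you to prove separately that $\Sigma_{1/2}$ has exactly two components; this is what produces your ``alternating labels'' case.

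In that case your energy accounting is not correct as stated. You describe the junction maps as ``test maps of the form (\ref{test}) carrying $\hat q$ down to $0$'' and claim the cost can ``be made small by keeping $\hat q\le p/2$ on the short junction pieces.'' But the cost of such a junction is dominated by $\int \hat q_x^2$, and forcing $\hat q$ from some value of order $p$ down to $0$ over a short piece incurs a kinetic cost that is \emph{not} controlled by the smallness of $\hat q$ alone — it blows up as the piece shortens. What saves the construction in the paper (and in your own same--label case, where you correctly write $\hat q=q^0-q$) is precisely the \emph{reflection} $\hat q = p-q$, which gives $\hat q_x^2=q_x^2$ identically, so the kinetic term on the flanks is unchanged and the strict gain comes entirely from the monotonicity of $\mathcal W(\cdot,\nu)$ and of $f(\cdot,x)$ (Lemma~\ref{lemmaw-true}, Lemma~\ref{represent}). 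Your Cauchy--Schwarz lower bound $\int q_x^2\ge (\Delta q)^2/\mathrm{length}$ does give a lower bound on the energy of the excursion, but it does not give a lower bound that is bounded away from zero uniformly (the excursion can barely graze $q^0$), so you cannot hope to beat an uncontrolled junction cost with it. To fix the alternating case you should use the reflection $\hat q=p-q$ on the parts of the two gaps where $q\in[p/2,p]$ and paste a constant translate of the common connection on the middle block where $q\ge p$ — i.e., exactly the paper's competitor — which makes the case analysis unnecessary in the first place: defining $l_\pm$ via labels as in the paper collapses your two sub--cases into one.
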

\begin{proof}

\noindent
1. From (\ref{identity}) and (\ref{v-y-der}) we have, for $x\in\Sigma_1$,
\begin{equation}\label{nu-yy}
\|v_y(x,\cdot)\|^2\leq\|v(x,\cdot)\|\|v_{yy}(x,\cdot)\|\leq\bar{C}_1q(x)\leq\bar{C}_1{p}.
\end{equation}

\noindent
2. For $x\in\Sigma_1$, under the standing assumption that ${p}>0$ is small, we have that $\int_\R\vert u_x\vert^2{d} y$ can be represented as in (\ref{xenergy}) and 1. implies that, for each $x\in\Sigma_1$ the function $f$ introduced in Lemma \ref{represent} is nondecreasing.

\noindent
3. Define
\begin{equation}\label{l-l+}
\begin{split}
& l_-:=\max\{x\in\Sigma_{\frac{1}{2}}:\bar{u}=\bar{u}_-\},\\
& l_+:=\min\{x\in\Sigma_{\frac{1}{2}}:\bar{u}=\bar{u}_+\}
\end{split}
\end{equation}
and observe that the continuity of the map $[0,L]\ni x\rightarrow q(x)\in\R$ and $u(0,\cdot)=\bar{u}_-$, $u(L,\cdot)=\bar{u}_+(\cdot-\bar{\eta})$ imply
\begin{equation}\label{l-l+-value}
\begin{split}
&0<l_\pm<L,\\
&q(l_-)=q(l_+)=\frac{{p}}{2}.
\end{split}
\end{equation}

If $[l_+,L]\not\subset\Sigma_{\frac{1}{2}}$ there exists $x^*\in(l_+,L)$ such that
\[\begin{split}
&\frac{{p}}{2}<q(x^*),\\
&q(x)\leq q(x^*),\quad x\in[l_+,L].
\end{split}
\]
Then there are two possibilities
\begin{description}
\item[a)]$\frac{{p}}{2}<q(x^*)\leq{p}$,
\item[b)]${p}<q(x^*)$.
\end{description}

\noindent
We can immediately exclude case a) by Lemma \ref{NoMax} which, as observed in Remark \ref{TransPoss}, can be applied to the present situation. By arguing as in the final part of the proof of Theorem \ref{main} after Remark \ref{TransPoss}, we can also exclude case b). Indeed if
 $[\tilde{\xi}_1,\tilde{\xi}_2]\subset(\xi_1,\xi_2)$ are defined by
\[\begin{split}
&\tilde{\xi}_1=\min\{x>l_+:q(x)\geq{p}\},\\
&\tilde{\xi}_2=\max\{x:q(x)\geq{p}\},
\end{split}\]
and
\[\begin{split}
&\xi_1=\max\{x<\tilde{\xi}_1:q(x)\leq\frac{{p}}{2}\},\\
&\xi_2=\min\{x>\tilde{\xi}_2:q(x)\leq\frac{{p}}{2}\}.
\end{split}\]
Then, as in (\ref{xtildex-interv}) and (\ref{xtildex-interv1}), we have
\begin{equation}\label{Xtildex-interv}
\begin{split}
&q(\tilde{\xi}_1)=q(\tilde{\xi}_2)={p},\\
&q(\xi_1)=q(\xi_2)=\frac{{p}}{2},\\
&q(x)\in(\frac{{p}}{2},{p}),\;\;x\in(\xi_1,\tilde{\xi}_1)\cup(\tilde{\xi}_2,\xi_2).
\end{split}
\end{equation}
Note that the definition of $\tilde{\xi}_1$ and $\tilde{\xi}_2$ implies $[l_+,\tilde{\xi}_1]\subset\Sigma_1$ and $[\tilde{\xi}_2,L]\subset\Sigma_1$ and therefore we have
\[\bar{u}=\bar{u}_+,\;\;\text{for}\;x\in[l_+,\tilde{\xi}_1]\cup[\tilde{\xi}_2,L].\]
On the basis of these observations we can define a competing map $\tilde{u}$ by setting
\[
\begin{split}
&\tilde{u}(x,\cdot)=u(x,\cdot),\quad x\in[0,\xi_1),\\
&\tilde{u}(x,\cdot)=u(x,\cdot-(\hat{h}(x_2)-h(x_2))),\quad x\in(\xi_2,L].
\end{split}
\]
in $[0,\xi_1)\cup(\xi_2,L]$ and by defining $\tilde{u}$ in the interval $[\xi_1,\xi_2]$  exactly as in the final part of the proof of Theorem \ref{main} (aside from replacing $q^0$ with ${p}$). Then arguing as in that proof we conclude that b) is in contradiction with the minimality of $u$ and $[l_+,L]\subset\Sigma_{\frac{1}{2}}$ is established.

\noindent
Since the proof that $[0,l_-]\subset\Sigma_{\frac{1}{2}}$ is similar we obtain (\ref{sigma-st}). We have $\bar{u}=\bar{u}_-$ in $[0,l_-]$ and $\bar{u}=\bar{u}_+$ in $[l_+,L]$ and therefore $l_-<l_+$. To prove (\ref{fixed-diff1}) we observe that the definition of $\Sigma_{\frac{1}{2}}$ and (\ref{sigma-st}) imply
\[q(x)=\min_{\mathrm{p}\in\{-,+\}}\min_{r\in\R}\|u(x,\cdot)-\bar{u}_{\mathrm{p}}(\cdot-r)\|>\frac{{p}}{2},\quad x\in(l_-,l_+).\]
From this and Lemma \ref{away} we obtain, using also Lemma \ref{ul-bounds}
\[e_{\frac{{p}}{2}}(l_+-l_-)\leq \int_{l_-}^{l_+}(J_\R(u(x,\cdot))-c_0){d} x\leq C_0\]
and (\ref{fixed-diff1}) follows with $C=\frac{C_0}{e_{\frac{{p}}{2}}}$.
The monotonicity of the map $x\rightarrow q(x)$ in the intervals $[0,l_-]$ and $[l_+,L]$ follow from Lemma \ref{NoMax}.
The proof is complete.
\end{proof}

From Proposition \ref{key} we know that the function $x\rightarrow q(x)\leq\frac{{p}}{2}$ is monotone in $[0,l_-]$ and in $[l_+,L]$. Next we show that $q(x)$ converges to $0$ exponentially in $[l_+,L]$ and that a corresponding  statement applies to $[0,l_-]$.
\begin{lemma}\label{exp-decay-in}
It results
\begin{equation}\label{exp-decay-in1}
q(x)
\leq\frac{{p}}{2}e^{-\sqrt{\frac{\mu}{8}}(x-l_+)},\quad\text{ for }\;x\in[l_+,L],
\end{equation}
and
\begin{equation}\label{exp-decay-h}
\vert h^\prime(x)\vert\leq Ce^{-\sqrt{\frac{\mu}{16}}(x-l_+)},\quad\text{ for }\;x\in[l_+,L],
\end{equation}
where $\mu>0$ is the constant in (\ref{W2geqq2}) and $C>0$ is independent of $L>1$.
An analogous statement applies to the interval $[0,l_-]$.
\end{lemma}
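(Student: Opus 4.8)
The plan is to repeat, on the bounded interval $[l_+,L]$, the two–step argument (a differential inequality followed by a maximum–principle comparison) by which $(\ref{q(x)-0})$ was established in Lemma~\ref{to-baru+}. Recall from Proposition~\ref{key} that on $[l_+,L]$ the profile in the representation $(\ref{vL})$ is $\bar u_+$, that $q(x)=\|v^L(x,\cdot)\|\le\frac{p}{2}\le q^0$, and that $x\mapsto q(x)$ is non-increasing; moreover $q(L)=0$ because $u^L(L,\cdot)=\bar u_+(\cdot-\bar\eta)$. By Lemma~\ref{exp-deriv} the function $h$ is $C^1$ on $[l_+,L]$ and the bound $(\ref{v-y-der})$ holds, and, $\tilde\omega$ being $0$ by Lemma~\ref{yvar}, the identities $(\ref{hprime})$ and $(\ref{xenergy})$ are available there.

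\emph{Step 1 (differential inequality).} Writing $u=u^L$ and proceeding exactly as in the derivation of $(\ref{ElIn0})$ — differentiate $\|u(x,\cdot)-\bar u_+(\cdot-h(x))\|^2$ twice, insert the equations $(\ref{system})$ satisfied by $u$ and by $\bar u_+$, and use the orthogonality in $(\ref{vL})$, the formula $(\ref{hprime})$ for $h'$, the estimates $(\ref{NonlTermEst})$, $(\ref{wef-tvv})$, $(\ref{W2geqq2-i})$, together with $\|v_x(x,\cdot)\|^2\le2\|u_x(x,\cdot)\|^2$ from $(\ref{xenergy})$ — one obtains
\begin{equation*}
\frac{d^2}{dx^2}\|v(x,\cdot)\|^2\ \ge\ \frac{\mu}{2}\,\|v(x,\cdot)\|^2-C\,\omega_L\,\|v(x,\cdot)\|,\qquad x\in[l_+,L].
\end{equation*}
The only new feature relative to Lemma~\ref{to-baru+} is the last term: there the Hamiltonian identity $(\ref{Hamilton=0})$ held with $\omega=0$, whereas here one has only $(\ref{hamilton})$ with the constant $\omega_L\ge0$ of Lemma~\ref{yvar}, and $\tfrac12\|u_x(x,\cdot)\|^2=J_\R(u(x,\cdot))-c_0+\omega_L$ produces the extra contribution $C\omega_L\|v\|$.

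\emph{Step 2 (absorbing $\omega_L$, comparison, and the bound on $h'$).} Since $\omega_L\le C_0/L$ (Lemma~\ref{yvar}) and $q$ is non-increasing with $q(l_+)=\frac{p}{2}$, $q(L)=0$, there is $c>0$ independent of $L$ such that on $\{x\in[l_+,L]:q(x)\ge cC_0/L\}=[l_+,\tau]$ the term $C\omega_L\|v\|$ is dominated by a small fraction of $\|v\|^2$ — here one uses the freedom to enlarge the constant $\frac{\mu}{2}$ of Step~1 by taking $q^0$ smaller — so that $\frac{d^2}{dx^2}\|v\|^2\ge\frac{\mu}{2}\|v\|^2$ holds on $[l_+,\tau]$; on $[\tau,L]$ monotonicity gives $q(x)\le q(\tau)\le cC_0/L$, negligible for large $L$. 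Comparing $\|v(x,\cdot)\|^2$ on $[l_+,\tau]$ with the solution of $\varphi''=\frac{\mu}{2}\varphi$ taking the values $(\frac{p}{2})^2$ at $l_+$ and $q(\tau)^2$ at $\tau$, the maximum principle (as in Step~2 of the proof of Lemma~\ref{to-baru+}) yields $q(x)^2\le(\frac{p}{2})^2e^{-\sqrt{\mu/2}(x-l_+)}+(cC_0/L)^2$, and hence $(\ref{exp-decay-in1})$ after relabelling the constant, the value $q(L)=0$ ensuring the estimate is not vacuous near $x=L$. Finally $(\ref{exp-decay-h})$ follows from $(\ref{hprime})$, which gives $|h'(x)|\le C\|v_x(x,\cdot)\|\,\|v_y(x,\cdot)\|$: using $\|v_x\|^2\le2\|u_x\|^2=4\big(J_\R(u(x,\cdot))-c_0+\omega_L\big)\le C(q(x)^2+L^{-1})$, the bound $\|v_y(x,\cdot)\|^2\le\bar C_1 q(x)$ from $(\ref{nu-yy})$, and $(\ref{exp-decay-in1})$, one gets the stated exponential decay of $h'$. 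The statement on $[0,l_-]$ is proved symmetrically, with $q(0)=0$ and $q$ non-decreasing there.

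The main obstacle is the inhomogeneous term $C\omega_L\|v\|$ appearing in Step~1: it has no counterpart in Lemma~\ref{to-baru+} (where $\omega=0$), and removing it so that a clean exponential comparison survives on all of $[l_+,L]$ rests on the a~priori bound $\omega_L\le C_0/L$ of Lemma~\ref{yvar} together with the monotonicity of $q$ and the boundary data $q(l_+)=\frac{p}{2}$, $q(L)=0$. Everything else is a transcription of the already-proved Lemma~\ref{to-baru+} to a finite interval.
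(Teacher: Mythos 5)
You have correctly located the main complication in transcribing the proof of Lemma \ref{to-baru+} to the present setting, namely that $u^L$ satisfies the Hamiltonian identity (\ref{hamilton}) with $\omega=\omega_L\ge 0$ rather than $\omega=0$, so (\ref{UxxUpper}) must be replaced by $\|v_x\|^2\le 4(\mathcal{W}(v)+\omega_L)$. The paper's own proof is terse on precisely this point. However, your fix does not close the argument.

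The problem is in Step~2. Writing $\tau$ for the first $x$ with $q(\tau)=cC_0/L$, your comparison on $[l_+,\tau]$ gives at best
$q(x)^2\le(\tfrac{p}{2})^2e^{-\sqrt{\mu/2}(x-l_+)}+(cC_0/L)^2$, and on $[\tau,L]$ monotonicity gives only the \emph{constant} bound $q(x)\le cC_0/L$. This is not (\ref{exp-decay-in1}): in the limiting regime relevant to Section~\ref{section34}, $L-l_+\to+\infty$, so for $x$ well inside $[\tau,L]$ the right-hand side of (\ref{exp-decay-in1}) is $\tfrac{p}{2}e^{-\sqrt{\mu/8}(x-l_+)}\ll cC_0/L$. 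The constant error term cannot be ``relabelled'' away because the exponent $\sqrt{\mu/8}$ is fixed by $\mu$; the boundary value $q(L)=0$ by itself does not control $q$ at intermediate $x\in(\tau,L)$. Consequently your argument proves a strictly weaker statement (exponential decay plus an $O(1/L)$ additive error) than Lemma \ref{exp-decay-in} claims.

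The correct resolution is to sharpen the estimate of $I_2$ so that the $\omega_L$-contribution carries an extra factor of $\|v\|$, after which it can be absorbed directly. Concretely, instead of invoking (\ref{vy-small}), bound
$\langle v_x,v_y\rangle^2\le\|v_x\|^2\|v_y\|^2\le\bar C_1\|v\|\,\|v_x\|^2$ using (\ref{nu-yy}), and combine with (\ref{den-bounds}) to get
$|I_2|\le C\|v\|^2\|v_x\|^2\le C\|v\|^2\bigl(4\mathcal{W}(v)+4\omega_L\bigr)$.
Then
\begin{equation*}
I_1+I_2\ \ge\ \bigl(2-4C\|v\|^2\bigr)\mathcal{W}(v)-C\|v\|^{8/3}-4C\omega_L\|v\|^2
\ \ge\ \Bigl(\tfrac{\mu}{2}-4C\omega_L\Bigr)\|v\|^2,
\end{equation*}
after using (\ref{W2geqq2-i}) and taking $p$ small. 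Since $\omega_L\le C_0/L$ (Lemma \ref{yvar}), the coefficient is $\ge\tfrac{\mu}{4}$ for all $L$ larger than a fixed $L_0$, so the clean differential inequality $\tfrac{d^2}{dx^2}\|v\|^2\ge\tfrac{\mu}{2}\|v\|^2$ holds on \emph{all} of $[l_+,L]$. Then, as the paper does, a single comparison with $\varphi''=\tfrac{\mu}{2}\varphi$, $\varphi(l_+)=(\tfrac{p}{2})^2$, $\varphi(L)=0$ gives (\ref{exp-decay-in1}) directly, and (\ref{exp-decay-h}) follows from (\ref{hprime}) as you indicate. This is, essentially, the argument the paper is tacitly relying on when it says ``proceed as in the proof of Lemma \ref{to-baru+} and use (\ref{ElIn0})''; the extra $\|v\|$ factor coming from $\|v_y\|\le C\|v\|^{1/2}$ is the key ingredient your derivation misses.
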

\begin{proof}
Proposition \ref{key} implies $q(x)\leq\frac{{p}}{2}$ for $x\in[l_+,L]$ and $q(l_+)=\frac{{p}}{2}$, $q(L)=0$. Therefore we can proceed as in the proof of Lemma \ref{to-baru+} and use (\ref{ElIn0}) and the maximum principle to deduce
\[q(x)^2\leq\varphi(x),\;\;x\in[l_+,L],\]
where
\[\varphi(x)=(\frac{{p}}{2})^2\frac{\sinh{\sqrt{\frac{\mu}{2}}(L-x)}}{\sinh{\sqrt{\frac{\mu}{2}}(L-l_+)}}
\leq(\frac{{p}}{2})^2e^{-\sqrt{\frac{\mu}{2}}(x-l_+)},\quad\text{ for }\;x\in[l_+,L],\]
is the solution of $\varphi^{\prime\prime}=\frac{\mu}{2}\varphi$ with the boundary conditions $\varphi(l_+)=\frac{{p}}{2}$ and $\varphi(L)=0$. This implies (\ref{exp-decay-in1}) and (\ref{exp-decay-h}) follows as in Lemma \ref{to-baru+}. A similar argument applies to the interval $[0,l_-]$.
 The proof is complete.
\end{proof}
\subsection{Conclusion of the proof of Theorem \ref{scha-th}}\label{section34}

We focus on the family of maps $\tilde{u}=\tilde{u}^L$, $L>1$ defined via $u=u^L$, the minimizer in Lemma \ref{ul-exists}, by
\begin{equation}\label{family}
\tilde{u}(x,y):= u(x-l_-,y),\quad (x,y)\in[-l_-,L-l_-]\times\R,\;\;L>1.
\end{equation}
From (\ref{fixed-diff1}) in  Proposition \ref{key} we can assume that, along a subsequence,
\begin{equation}\label{lim-elle}
\lim_{L\rightarrow+\infty}(l_+-l_-)=\ell\leq C.\\
\end{equation}
It follows that, along a further subsequence, at least one between $l_-$ and $L-l_+$ diverges to $+\infty$ as $L\rightarrow+\infty$. Therefore we need to consider two alternatives:
\begin{description}
\item[a)]
\begin{equation}\label{case-1f}
\lim_{L\rightarrow+\infty}l_-=\lim_{L\rightarrow+\infty}L-l_+=+\infty.
\end{equation}
\item[b)] One of the limits in a) is bounded. We will discuss the case (the other case is analogous)
\begin{equation}\label{case-2f}
\lim_{L\rightarrow+\infty}l_-=\ell_-<+\infty,\quad\quad\lim_{L\rightarrow+\infty}L-l_+=+\infty.
\end{equation}
\end{description}
If (\ref{case-1f}) prevails, (\ref{smoothness}) implies that, along a subsequence, we have
\[\lim_{L\rightarrow+\infty}\tilde{u}(x,y)=\mathrm{u}(x,y),\quad (x,y)\in\R^2,\]
where $\mathrm{u}\in C^2(\R^2;\R^m)$ and the convergence is locally in $C^2(\R^2;\R^m)$. It follows that $\mathrm{u}$ is a solution of (\ref{system}) and Lemma \ref{exp-deriv} implies that $\mathrm{u}$ satisfies (\ref{exp-decay-3}).
From Lemma \ref{exp-decay-in} we can also assume that, as $L\rightarrow+\infty$ the functions  $q(\cdot-l_-)$ and $h(\cdot-l_-)$ converge point-wise in $[\ell,+\infty)$ to the functions $q^\mathrm{u}(\cdot)$ and $h^\mathrm{u}(\cdot)$ defined by
\[q^\mathrm{u}(x)=\|\mathrm{u}(x,\cdot)-\bar{u}_+(\cdot-h^\mathrm{u}(x))\|
=\min_{r\in\R}\|\mathrm{u}(x,\cdot)-\bar{u}_+(\cdot-r)\|\]
and moreover that
\[\begin{split}
&q^\mathrm{u}(x)\leq\frac{{p}}{2}e^{-\sqrt{\frac{\mu}{8}}(x-\ell)},\quad x\in[\ell,+\infty),\\
&\vert h_+^\mathrm{u}-h^\mathrm{u}(x)\vert\leq2\frac{C}{\sqrt{\frac{\mu}{16}}}(1-e^{-\frac{\sqrt{\mu}}{2}(x-\ell)}),\quad  x\in[\ell,+\infty)
\end{split}
\]
where $h_+^\mathrm{u}=\lim_{x\rightarrow+\infty}h^\mathrm{u}(x)$.
These estimates proves (\ref{asymptotic-u})$_2$ for $x\rightarrow+\infty$ with $\eta_+=h_+^\mathrm{u}$. A similar reasoning completes the proof of (\ref{asymptotic-u})$_2$ for $x\rightarrow-\infty$. Therefore $\mathrm{u}$ can be identified with the map $u$ in Theorem \ref{scha-th}.

Suppose now that (\ref{case-2f}) holds.
 Proceeding as before we show that along a subsequence, in the limit for $L\rightarrow+\infty$, $\tilde{u}$ converges to a solution $\mathrm{u}:[-\ell,+\infty]\times\R\rightarrow\R^m$ of (\ref{system}) and that $\mathrm{u}$ satisfies (\ref{exp-decay-3}) and (\ref{asymptotic-u})$_2$ for $x\rightarrow+\infty$. On the other hand we have
 \begin{equation}\label{the-good}
 \mathrm{u}(-\ell_-,\cdot)=\bar{u}_+(\cdot-\eta_-)
 \end{equation}
for some $\eta_-\in\R$. Moreover from Lemma \ref{yvar} we have that $\mathrm{u}$ satisfies (\ref{hamilton}) with $\omega=0$ and it follows $\| \mathrm{u}_x(-\ell_-,\cdot)\|=0$ and therefore
 \[ \mathrm{u}_x(-\ell_-,y)=0,\quad y\in\R.\]
 This implies that $\mathrm{u}$ can be extended to $\R^2$ as a $C^1$ map by setting
 \[ \mathrm{u}(x,\cdot)=\bar{u}_-(\cdot-\eta_-),\quad\text{ for }x<-\ell_-.\]
 The map $\mathrm{u}$ extended in this way is a weak solution of (\ref{system}) in $\R^2$ and from the assumption that $W$ is $C^3$ and elliptic theory it follows that $\mathrm{u}$ is a $C^2$ solution of (\ref{system}).  The extended map $\mathrm{u}$ trivially satisfies (\ref{asymptotic-u})$_2$ for $x\rightarrow-\infty$ and therefore we have that also in case b) the map $\mathrm{u}$ can be identified  with the map $u$ in Theorem \ref{scha-th}. The proof is complete.
 \begin{remark}
 Actually the occurrence of case b) can be excluded. Indeed,
  on the basis of the previous discussion, (\ref{case-2f}) implies the existence of two solutions of (\ref{system}) that coincide in an open set, namely $\mathrm{u}$ and the map $\mathrm{v}$ defined by
\[\mathrm{v}(x,y)=\bar{u}_-(y-\eta_-),\quad (x,y)\in\R^2\]
and as observed in \cite{abg} this contradicts the unique continuation theorem in \cite{gl}.
\end{remark}

\nocite{*}
\bibliographystyle{plain}

\end{document}